\numberwithin{equation}{section}
\newtheorem{theorem}{Theorem}[section]
\newtheorem{lemma}[theorem]{Lemma}
\theoremstyle{definition}
\newtheorem{remark}[theorem]{Remark}
\newcommand{\beas}{\begin{eqnarray*}}
\newcommand{\eeas}{\end{eqnarray*}}
\newcommand{\bes} {\begin{equation*}}
\newcommand{\ees} {\end{equation*}}
\newcommand{\be} {\begin{equation}}
\newcommand{\ee} {\end{equation}}
\newcommand{\bea} {\begin{eqnarray}}
\newcommand{\eea} {\end{eqnarray}}
\newcommand{\beals} {\begin{align*}}
\newcommand{\eeals} {\end{align*}}
\newcommand{\beal} {\begin{align}}
\newcommand{\eeal} {\end{align}}
\newcommand{\hol}{\mathcal {O}}
\newcommand{\cont}{\mathcal C}
\newcommand{\C}{\mathbb C}
\newcommand{\Cn}{{\mathbb{C}^n}}
\newcommand{\N}{\mathbb N}
\newcommand{\Z}{\mathbb Z}
\newcommand{\R}{\mathbb{R}}
\newcommand{\Rea}{\operatorname{Re}}
\newcommand{\Ima}{\operatorname{Im}}
\newcommand{\F}{\mathcal F}
\newcommand{\rea}{\operatorname{Re}}
\newcommand{\ltt}{\left(}
\newcommand{\rtt}{\right)}
\newcommand{\Om}{\Omega}
\newcommand{\eps}{\epsilon}
\begin{document}
\begin{abstract}
The Fantappiè and Laplace transforms realize isomorphisms between analytic functionals supported on a convex compact set $K\subset\Cn$ and certain spaces of holomorphic functions associated with $K$. Viewing the Bergman space of a bounded domain in $\Cn$ as a subspace of the space of analytic functionals supported on its closure, the images of the restrictions of these transforms have been studied in the planar setting. For the Fantappi{\`e} transform, this was done for simply connected domains (Napalkov Jr--Yulmukhametov, 1995), and for the Laplace transform, this was done for convex domains (Napalkov Jr--Yulmukhametov, 2004). In this paper, we study this problem in higher dimensions for strongly convex domains, and establish duality results analogous to the planar case. We also produce examples to show that the planar results cannot be generalized to all convex domains in higher dimensions. 
    
\end{abstract}
\title{Dual realizations of Bergman spaces on strongly convex domains}
\author{Agniva Chatterjee}
\address{Department of Mathematics, Indian Institute of Technology Palakkad, Palakkad, Kerala-678623, India}
\email{agnivac@iitpkd.ac.in}
\keywords{Fantappi{\`e} transform, Laplace transform, Paley-Wiener theorems, Bergman spaces}
\subjclass[2020]{32A26. 44A10. 32A36. 32A55}
\maketitle
\section{Introduction}%The Fantappi{\`e} transform for Bergman spaces%.
For a compact convex subset $K$ of $\C^n$, the following spaces are known to be isomorphic:
\begin{itemize}
    \item [$(a)$] $\hol'(K)$, the space of analytic functionals on $K$, 
    \item [$(b)$] $\hol(K^*)$, the space of holomorphic functions on $K^*$, the dual complement of $K$, given by 
    \bes 
        K^*=\left\{z\in\C^n: \left\langle\zeta,z\right\rangle\neq 1,\,\forall \zeta\in K\right\},
    \ees 
    where $\left\langle\zeta,z\right\rangle=\zeta_1z_1+\zeta_2z_2+\cdots\zeta_nz_n$, and 
   \label{def:exp fn} \item[$(c)$] $\hol_{\operatorname{\operatorname{exp}}}(K)$, the space of entire functions $F$ on $\C^n$ such that, for every $\eps>0$, there exists a $C_\eps>0$ so that 
    \bes
    |F(z)|\leq C_\eps e^{(1+\eps)H_K(z)},\quad z\in \C^n,
    \ees
    where $H_K$ is defined as
    \bea\label{eq:supprt def}
H_K(z)=\sup\limits_{\zeta\in K}\rea\langle\zeta,z\rangle,\,z\in\C^n.
\eea
Note that $H_K(\overline z)$ coincides with the classical support function of $K$.
\end{itemize}

The duality between $(a)$, and $(b)$ was established by Aizenberg and Martineau in the case of convex sets, and by Gindikin and Henkin in the more general case of $\C$-convex sets; see \cite{Ab66,MA67,GsHg78}. The  {\em Fantappi{\`e} transform} on $\hol'(K)$, given by
\bes
\mathcal{F}_k\ltt\mu\rtt(z)=\mu\ltt\frac{1}{\ltt1-\left\langle\cdot,z\right\rangle\rtt^k}\rtt,\quad \mu\in\hol'(K), z\in K^*,
\ees
for any fixed integer $k\geq 1$,  gives an explicit isomorphism between the two spaces (see Znamenskii \cite{Zs85}). Since $\hol'\ltt K\rtt$ is a large space, it is of interest to study the restrictions of $\mathcal F_k$ to certain important subclasses of $\hol'\ltt K\rtt$. For instance, when $K$ is the closure of a bounded domain $\Om\subset\Cn$, then under different assumptions on $\Om$, the restriction of $\mathcal F_{1}$ to the Bergman space of $\Om$, the restriction of $\F_n$ to the holomorphic Hardy space of $\Om$, and the restriction of $\F_n$ to the space of holomorphic functions with polynomial growth on $\Om$, has been considered in \cite{MP05}, \cite{Li02}, and \cite{AaKl13}, respectively. 

The Paley-Wiener type duality between $(a)$ and $(c)$ was established by P{\'o}lya-Ehrenpreis-Martineau, via the Laplace transform 
\[
\mathcal{L}(\mu)(z)=\mu\ltt e^{\langle\cdot,z\rangle}\rtt,\quad \mu\in\hol'(K), z\in \C^n.
\] See \cite[Theorem 4.1.9]{APS}. As is the case for $\F_k$, the restriction of $\mathcal L$ to different subspaces of $\hol'(K)$ has been considered. For instance, the restriction of $\mathcal L$ to holomorphic Hardy spaces of a bounded domain has been studied for planar domains, strongly convex domains in $\C^n$, and some weakly convex domains in $\C^2$ in \cite{LuYl91}, \cite{Li02}, and \cite{AC24}, respectively. 

In this note, we consider the restriction of the Fantappi{\`e} and the Laplace transforms on $p$-Bergman spaces on certain bounded domains $\Om\subset\C^n$.
For a bounded domain $\Om\subset\C^n$, and $p\in(1,\infty)$, the {\em $p$-Bergman space on $\Om$} is defined as 
\bea\label{eq:def berg}
A^p\ltt\Om\rtt=\left\{f\in \hol\ltt\Om\rtt:\|f\|_{L^p(\Om)}=\ltt\int_\Om\left|f(\zeta)\right|^p dV(\zeta)\rtt^{1/p}<\infty\right\},
\eea
where $V$ is the Lebesgue measure on $\Om$. Each $A^p(\Om)$ is a Banach space when endowed with the $L^p$-norm, and can be viewed as a subset of $\hol'\ltt\overline\Om\rtt$ via the (possibly non-injective) inclusion
\be\label{eq:inclusion}
    A^p(\Om)\overset{\iota}{\hookrightarrow} L^p(\Om)\xrightarrow{\theta} \hol'\ltt\overline\Om\rtt,
\ee
where $\theta:L^p(\Om)\rightarrow\hol'\ltt\overline\Om\rtt$ is given by
\be\label{eq: theta}
\theta:f\mapsto \left(\mu_f:g\mapsto\frac{n!}{\pi^n}\int_\Om \overline{f(\zeta)} g(\zeta) dV(\zeta)\right).
\ee
The composition $\mathcal F_k\circ\theta\circ\iota$, denoted simply by $\mathcal F
_k$ for convenience, is referred to as the Fantappi{\`e} transform on the $p$-Bergman space, and is explicitly given by
\bea\label{eq:Fan berg}
\mathcal{F}_{k}(g)(z)=\frac{n!}{\pi^n}\int_{\Om} \frac{\overline{ g(\zeta)}}{\ltt 1-\langle \zeta,z\rangle\rtt^{k}}dV(\zeta),\quad g\in A^p(\Om),\, z\in \text{int}\,\Om^*=\overline\Om^*.
\eea

First, we study the question of when $\mathcal F_k$ is a normed space isomorphism between $A^p(\Om)$ and $A^p\left(\Om^*\right)$, where, in an abuse of notation, we denote the open set $\overline\Om^*$ by $\Om^*$. The range of $\mathcal F_k$ can be sensitive to the choice of $k$, and we focus on the case $k=n+1$. This is motivated by the following observation: if $\mathbb B^n$ denotes the unit ball in $\Cn$, then
\bes
\mathcal F_{n+1}\ltt f\rtt(z)=\overline{f(\overline{z})},\quad f\in A^2\ltt\mathbb B^n\rtt, z\in \mathbb B^n, 
\ees
and thus, $\mathcal{F}_{n+1}$ is an isometry between $A^2(\mathbb B^n)$ and $A^2\ltt\mathbb B^{n^*}\rtt$. This follows from the fact that $\frac{n!}{\pi^n}\ltt1-\left\langle\overline{\zeta},z\right\rangle\rtt^{-(n+1)}$, $\ltt\zeta,z\rtt\in \mathbb B^n\times\mathbb B^n$, is the reproducing kernel of $A^2(\mathbb B^n)$, and $\overline{\mathbb B^n}^*=\mathbb B^n$. {Note that $\mathbb B^n$ is a strongly convex {\em circled} domain. Recall that a domain $\Om\subseteq\C^n$, is said to be circled, if $e^{i\theta}z \in \Omega$, for all $z \in \Omega$ and $\theta \in [0, 2\pi)$. The above observation generalizes to the class of bounded strongly convex circled domains. In fact, we prove the following theorem in the general setting of any bounded strongly $\C$-convex domain with a strongly convex dual complement.}
%We prove the following generalization of the above observation.
\begin{theorem}\label{th:main1}
    Let $\Om\subset\C^n$ be a bounded $\cont^2$-smooth strongly $\C$-convex domain containing the origin, such that $\Om^*$ is strongly convex. Then, for any $p\in(1,\infty)$, $\mathcal{F}_{n+1}$ is a normed space isomorphism between $A^p\ltt\Om\rtt$ and $A^p\ltt\Om^*\rtt$. 
\end{theorem}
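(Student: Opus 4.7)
The plan is to show that $\mathcal{F}_{n+1}:A^p(\Om)\to A^p(\Om^*)$ is bounded, injective, and admits a bounded right inverse; together with the open mapping theorem these give the desired normed-space isomorphism. The analytic heart of the argument is a sharp lower bound of the form
\[
|1-\langle\zeta,z\rangle|\gtrsim \delta_\Om(\zeta)+\delta_{\Om^*}(z)+d(\zeta,z),\qquad \zeta\in\overline{\Om},\ z\in\overline{\Om^*},
\]
where $\delta$ denotes distance to the boundary and $d$ is a non-negative quasi-distance coming from the Cauchy--Leray--Fantappi\`e geometry. The combination of strong $\C$-convexity of $\Om$ with strong convexity of $\Om^*$ is the geometric ingredient that makes such an estimate available, in analogy with the Phong--Stein--McNeal estimates in the strongly pseudoconvex setting. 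Given this bound, a Schur test (with test functions $\delta_\Om(\zeta)^{-\alpha}$ and $\delta_{\Om^*}(z)^{-\alpha}$ for suitable $\alpha=\alpha(p)$, along Forelli--Rudin lines) yields $L^p\to L^p$ boundedness of the integral operator with kernel $|1-\langle\zeta,z\rangle|^{-(n+1)}$, and hence of $\mathcal{F}_{n+1}$.

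For injectivity, if $\mathcal{F}_{n+1}(f)\equiv 0$, then expanding $(1-\langle\zeta,z\rangle)^{-(n+1)}$ as a power series in $z$ around $0\in\Om^*$ gives
\[
\int_\Om\overline{f(\zeta)}\,\zeta^\alpha\,dV(\zeta)=0\qquad\text{for every multi-index }\alpha.
\]
Density of holomorphic polynomials in $A^{p/(p-1)}(\Om)$ --- standard for bounded strongly $\C$-convex domains by a Minkowski-functional dilation argument --- then forces $f=0$.

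For the bounded right inverse, the natural candidate is the ``reverse'' Fantappi\`e operator
\[
\mathcal{G}(g)(w)=\frac{n!}{\pi^n}\int_{\Om^*}\frac{\overline{g(z)}}{(1-\langle z,w\rangle)^{n+1}}\,dV(z),\qquad w\in(\Om^*)^*=\Om.
\]
Because the kernel $|1-\langle z,w\rangle|^{-(n+1)}$ is symmetric in its two arguments, the same estimate and Schur test machinery gives boundedness $\mathcal{G}:A^p(\Om^*)\to A^p(\Om)$. Verification of $\mathcal{F}_{n+1}\circ\mathcal{G}=\mathrm{Id}_{A^p(\Om^*)}$ reduces by continuity to a dense subclass (say, monomials, or functions holomorphic in a neighborhood of $\overline{\Om^*}$); on such functions one swaps integrations via Fubini and appeals to the classical fact that $\mathcal{F}_{n+1}$ realizes an isomorphism $\hol'(\overline{\Om^*})\xrightarrow{\sim}\hol(\Om)$ to identify the resulting inner integral as the identity.

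The principal obstacle is the boundedness step: the sharp lower bound on $|1-\langle\zeta,z\rangle|$ depends essentially on the double hypothesis --- strong $\C$-convexity of $\Om$ paired with strong convexity of $\Om^*$ --- and it is precisely the failure of this geometric coupling that the counterexamples promised in the abstract exploit on more general convex domains in $\C^n$ with $n\geq 2$.
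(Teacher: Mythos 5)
Your injectivity argument (vanishing moments plus density of polynomials in $A^q(\Om)$) is a correct and arguably simpler alternative to the paper's, which instead uses the duality $A^p(\Om)'\cong A^q(\Om)$. The boundedness step is morally the same as the paper's: Lemmas~\ref{le:bdd of LS ker} and \ref{le:comp of two ker} are a rigorous version of the lower bound you assert, except that the paper first pulls $\F_{n+1}$ back to a single copy of $\Om^*$ via the diffeomorphism $T_{\Om^*}$ of Lemma~\ref{le:cov} and then compares the transformed kernel $K_{\Om^*}$ with a Kerzman--Stein type kernel $B_{\Om^*}$, thereby avoiding a Schur test with test functions living on two different domains. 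You would still need to actually prove the two-domain estimate and carry out the Forelli--Rudin integrals, but the strategy is not wrong.

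The genuine gap is in surjectivity. You propose $\mathcal G(g)(w)=\frac{n!}{\pi^n}\int_{\Om^*}\overline{g(z)}(1-\langle z,w\rangle)^{-n-1}\,dV(z)$ as an explicit right inverse and claim that $\F_{n+1}\circ\mathcal G=\mathrm{Id}$ follows by ``Fubini plus the classical fact that $\F_{n+1}$ is an isomorphism $\hol'(\overline{\Om^*})\to\hol(\Om)$.'' After Fubini one is left with the inner integral
\[
\int_{\Om}\frac{dV(w)}{\ltt 1-\langle w,\eta\rangle\rtt^{n+1}\ltt 1-\overline{\langle z,w\rangle}\rtt^{n+1}},
\]
the $L^2(\Om)$-pairing of two Fantappi\`e kernels, and the abstract Aizenberg--Martineau isomorphism on analytic functionals gives no information about whether this is a reproducing kernel for $A^p(\Om^*)$. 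On the ball it is, because there the Fantappi\`e kernel with exponent $n+1$ coincides with the Bergman kernel; for a general $\Om$ it does not, and the identity you need is exactly the nontrivial content of surjectivity, so the appeal is circular. The missing ingredient is a concrete reproducing formula on $\Om^*$, and this is what the paper supplies in Lemma~\ref{le:repr}: a Cauchy--Fantappi\`e reproducing formula for $A^p$ of a strongly convex domain whose kernel, after the change of variables $T_{\Om^*}$, is precisely $\widetilde K_{\Om^*}(\cdot,z)^{-(n+1)}\,\mathfrak h_{\Om^*}$. Correspondingly, the paper does not use an unweighted $\mathcal G$: the preimage $\phi_g$ is produced implicitly via the $A^q(\Om)'\cong A^p(\Om)$ duality from the functional $F_g(\psi)=\int_\Om\psi\,(g\circ T_\Om)(\mathfrak H\circ T_\Om)\,dV$, where the weight $\mathfrak H$ in \eqref{eq:mathfrak H} is chosen so that after changing variables the computation lands exactly on the Lemma~\ref{le:repr} reproducing integral. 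Without that reproducing formula (or an equivalent explicit identity), your surjectivity step does not close.
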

We are not aware of any characterization of the class of bounded $\cont^2$-smooth strongly $\C$-convex domains whose dual complements are strongly convex. {However, it is easy to check that the dual complement of a bounded $\cont^2$-smooth circled strongly convex (and, therefore, strongly $\C$-convex) domain is always strongly convex. For examples of non-circled strongly $\C$-convex (but not strongly convex) domains with strongly convex dual complements; see Remark~\ref{rk:thm 1 2}. In general, the dual complement of a bounded $\cont^2$-smooth strongly convex (hence, strongly $\C$-convex) domain is not necessarily strongly convex; see Lemma~\ref{le:counter ellip}. 
We do not know if strong convexity of the dual complement is a necessary condition for the duality in Theorem~\ref{th:main1} to hold; see Remark~\ref{rk:ques L^p bdd} for more details.

  Theorem~\ref{th:main1} can now be used to describe the range of the Laplace transform on $A^2(\Om)$, given by
    \bea\label{eq:lap berg}
    \mathcal L(f)(z)=\int_\Om \overline{f(\zeta)} e^{\left\langle\zeta,z\right\rangle} dV(\zeta),\quad z\in\C^n,
    \eea
    under an additional assumption of convexity on $\Om$.
\begin{theorem}\label{th:PW bergman}
    Let $\Om\subset\C^n$ be a bounded $\cont^2$-smooth strongly convex domain containing the origin, such that $\Om^*$ is strongly convex. Then the Laplace transform $\mathcal L$ on $A^2(\Om)$
    is a normed space isomorphism between $A^2(\Om)$, and the weighted Bergman space
    \[    A^2(\C^n,\omega_\Om)=\left\{F\in\hol(\C^n):\Vert F\Vert_{\omega_\Om}^2=\int_{\C^n}|F(z)|^2 \omega_\Om(z)<\infty\right\},
    \]
  with norm $\|\cdot\|_{\omega_\Om}$, where 
  \beas
  \omega_\Omega(z)=e^{-2H_\Om(z)}\|z\|^{n+\frac{1}{2}} \ltt dd^cH_\Om\rtt^n(z),\quad z\in\C^n,
  \eeas
  and $H_\Om(\overline z)$ is the support function of $\Om$.
\end{theorem}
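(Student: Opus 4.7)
The idea is to reduce Theorem~\ref{th:PW bergman} to Theorem~\ref{th:main1} by realizing $\mathcal{F}_{n+1}$ as an integral transform of $\mathcal{L}$, and then identifying the $L^{2}(\Om^{*})$-norm on the image of $\mathcal{F}_{n+1}$ with the weighted $L^{2}$-norm $\|\mathcal{L}(f)\|_{\omega_\Om}$ via a polar change of variables. The starting point is the classical formula
\[
\frac{1}{(1-w)^{n+1}}=\frac{1}{n!}\int_0^\infty t^n e^{-t(1-w)}\,dt,\qquad \Rea w<1,
\]
applied with $w=\langle\zeta,z\rangle$ for $\zeta\in\Om$ and $z\in\Om^{*}$. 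Under our hypotheses $\Om^{*}$ is convex and coincides with $\{z:H_\Om(z)<1\}$, so $\Rea\langle\zeta,z\rangle<1$. Fubini's theorem, justified by the Paley--Wiener--Ehrenpreis bound $|\mathcal{L}(f)(tz)|\le Ce^{tH_\Om(z)}$, then gives
\begin{equation}\label{eq:plan-intrep}
\mathcal{F}_{n+1}(f)(z)=\frac{1}{\pi^{n}}\int_{0}^{\infty}t^{n}e^{-t}\,\mathcal{L}(f)(tz)\,dt,\qquad z\in\Om^{*}.
\end{equation}
By Theorem~\ref{th:main1}, it then suffices to prove $\|\mathcal{L}(f)\|_{\omega_\Om}\asymp\|\mathcal{F}_{n+1}(f)\|_{L^{2}(\Om^{*})}$.

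To effect this comparison, set $F:=\mathcal{L}(f)$ and parametrize $\Om^{*}$ in polar coordinates $z=rw$ with $w\in\partial\Om^{*}=\{H_\Om=1\}$ and $r\in(0,1)$, so that $dV(z)=r^{2n-1}\,dr\,dS(w)/\|\nabla H_\Om(w)\|$ for $dS$ the Euclidean surface measure on $\partial\Om^{*}$. Substituting $s_j=t_jr$ on each ray in the expansion of $|\mathcal{F}_{n+1}(f)(z)|^{2}$ via \eqref{eq:plan-intrep} and performing the explicit radial integration
\[
\int_{0}^{1}r^{-3}\,e^{-(s_1+s_2)/r}\,dr=\int_{1}^{\infty}v\,e^{-(s_1+s_2)v}\,dv
\]
yields
\begin{equation}\label{eq:plan-ray}
\pi^{2n}\,\|\mathcal{F}_{n+1}(f)\|_{L^{2}(\Om^{*})}^{2}=\int_{\partial\Om^{*}}\frac{dS(w)}{\|\nabla H_\Om(w)\|}\int_{1}^{\infty}v\,|G_w(v)|^{2}\,dv,
\end{equation}
where $G_w(v):=\int_{0}^{\infty}s^{n}e^{-sv}F(sw)\,ds$ is the unilateral Laplace transform of $s\mapsto s^{n}F(sw)$. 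Meanwhile, in the analogous polar decomposition $z=sw$, $s>0$, of $\Cn\setminus\{0\}$, the $1$-homogeneity of $H_\Om$ (which gives $\det(H_\Om)_{j\bar k}(sw)=s^{-n}\det(H_\Om)_{j\bar k}(w)$) produces
\[
\|\mathcal{L}(f)\|_{\omega_\Om}^{2}=c_n\int_{\partial\Om^{*}}\frac{\|w\|^{n+\frac12}\det(H_\Om)_{j\bar k}(w)}{\|\nabla H_\Om(w)\|}\,dS(w)\int_{0}^{\infty}s^{\,2n-\frac12}e^{-2s}|F(sw)|^{2}\,ds
\]
for some dimensional constant $c_n>0$.

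The theorem now reduces to the two-sided comparison of these two integral expressions. After the shift $v\mapsto v+1$ absorbing the Paley--Wiener exponential growth of $F$ along each ray, a sharp Hardy--Hilbert type estimate for the one-sided Laplace transform on $(0,\infty)$ converts the inner integral of \eqref{eq:plan-ray} into an equivalent weighted radial $L^{2}$-norm of $F(sw)$. The geometric density $\|w\|^{n+\frac12}\det(H_\Om)_{j\bar k}(w)/\|\nabla H_\Om(w)\|$ is then recovered by exploiting the representation $F(sw)=\int_\Om \bar f(\zeta)e^{s\langle\zeta,w\rangle}\,dV(\zeta)$, which expresses $F$ along the ray through $w$ as the Laplace transform of the pushforward of $\bar f\,dV|_\Om$ under $\zeta\mapsto\langle\zeta,w\rangle$, with sharp exponential decay controlled by $H_\Om(w)$. \emph{The main obstacle} is precisely this final comparison: the exponent $n+\tfrac12$ and the Monge--Amp\`ere determinant factor reflect a delicate interplay between the scaling of the kernel $1/(s_1+s_2)^{2}$ arising from the radial computation in \eqref{eq:plan-ray}, the Mellin--Plancherel decomposition of the Laplace transform, and the polar geometry of $\partial\Om^{*}$; in particular, the equivalence is not a pointwise ray-by-ray identity but holds after integration against the Monge--Amp\`ere measure $(dd^cH_\Om)^{n}$. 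The strong convexity of both $\Om$ and $\Om^{*}$ is used here to guarantee smoothness and uniform non-degeneracy of $H_\Om$, $\nabla H_\Om$, and $\det(H_\Om)_{j\bar k}$ on $\partial\Om^{*}$, securing uniform constants in the final estimate.
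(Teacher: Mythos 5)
Your overall strategy coincides with the paper's: you identify $\mathcal{F}_{n+1}(f)=\pi^{-n}\mathfrak{B}_n(\mathcal{L}(f))$ where $\mathfrak{B}_n$ is the Borel transform (your \eqref{eq:plan-intrep} is precisely the paper's relation $\mathfrak{B}_n\circ\mathcal{L}=\pi^n\mathcal{F}_{n+1}$), and then try to transfer norms through $\mathfrak{B}_n$. However, there is a genuine gap at the step you yourself label ``the main obstacle.'' You propose to establish the radial (real-ray) norm equivalence $\int_1^\infty v\,|G_w(v)|^2\,dv\approx\int_0^\infty s^{2n-\frac12}e^{-2s}|F(sw)|^2\,ds$ by a ``sharp Hardy--Hilbert type estimate for the one-sided Laplace transform.'' This is not carried out, and more importantly it is doubtful that any such estimate gives a two-sided bound purely at the level of measurable functions on a real ray: a Mellin--Plancherel computation introduces a $|\Gamma(\sigma+it)|^2$ factor that decays exponentially in frequency, so the Laplace transform is \emph{not} an $L^2$-weight isomorphism on $(0,\infty)$ without using holomorphy. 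The paper avoids exactly this issue by decomposing along \emph{complex} slices $\{\lambda\zeta:\lambda\in\C\}$, $\zeta\in b\Om^*$, using Lindholm's slicing of the Monge--Amp\`ere measure (the items \eqref{eq:bdry int equiv}--\eqref{eq:one d PW}), and then invoking the genuinely one-variable holomorphic result: the isomorphism of $\mathfrak{B}_1$ between $A^2(\C,\omega_{M_\zeta})$ and $A^2(M_\zeta^*)$ (Lemma~\ref{le:borel one d}, built on the Ishimura--Yulmukhametov Preliminary Theorem together with the weight comparison of Lemma~\ref{le:comp berg sp}). That 1D Borel-transform theorem is precisely the input that your Hardy--Hilbert sketch would have to reproduce, and it fundamentally uses that each slice function is entire of exponential type, not just weighted $L^2$ on a ray.

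A second, smaller gap: even granting the norm equivalence $\|\mathcal{L}(f)\|_{\omega_\Om}\approx\|\mathcal{F}_{n+1}(f)\|_{L^2(\Om^*)}$ for $f\in A^2(\Om)$, this together with Theorem~\ref{th:main1} only shows $\mathcal{L}$ is a bounded below injection onto a closed subspace of $A^2(\C^n,\omega_\Om)$. You still need to show $\mathcal{L}$ is \emph{onto} $A^2(\C^n,\omega_\Om)$; the paper handles this separately by showing $\mathfrak{B}_n$ itself is surjective onto $A^2(\Om^*)$ (constructing $\widehat\mu_g$ as in the final paragraph of Section~5) and combining with the surjectivity of $\mathcal{F}_{n+1}$.

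Your polar-coordinate bookkeeping for both sides is otherwise correct (the inner-integral substitution and the homogeneity $\det(H_\Om)_{j\bar k}(sw)=s^{-n}\det(H_\Om)_{j\bar k}(w)$ do reproduce the $s^{2n-\frac12}e^{-2s}$ radial weight), so your setup is compatible with the paper's; the missing ingredient is the 1D holomorphic Borel-transform isomorphism that the paper obtains from the literature, plus the surjectivity argument.
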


Similar to Theorem~\ref{th:main1}, the hypothesis of this theorem holds true for any bounded $\cont^2$-smooth strongly convex circled domain.

{
Furthermore, when $\Om\subset\C^n$ is a bounded strongly convex domain, the weighted Bergman space $A^2(\C^n,\omega_\Om)$ is isomorphic to $A^2(\C^n,\mu_\Om)$ via the identity map, where
\[
\mu_\Om(z)=\|e^{\langle\cdot,z\rangle}\|^{-2}_{L^2(\Om)} \ltt dd^cH_\Om\rtt^n(z);
\]
see Lemma~\ref{le:comp berg sp}.
In the planar case, similar results as Theorem~\ref{th:main1} and Theorem~\ref{th:PW bergman} are known to hold for $p=2$, under a much less restrictive hypothesis on the domain. In particular, for {\em every} bounded convex domain $\Om\subset\C$, it is known that $\mathcal F_2$ is a normed space isomorphism between $A^2(\Om)$ and $A^2(\Om^*)$, and $\mathcal L$ is a normed space isomorphism between $A^2(\Om)$ and $A^2(\C,\mu_\Om)$; see \cite{NY95,MK99,IY04}.  However, we show that these results cannot be extended to higher dimensions in their full generality. 
\begin{theorem}\label{th:counter}
Let 
$\Om=\left\{\ltt \zeta_1,\zeta_2\rtt\in\C^2: |\zeta_1|+|\zeta_2|<1\right\}$. Then, 
\begin{itemize}
    \item [$(i)$] $\mathcal F_3$ is a bounded injective operator from $A^2(\Om)$ onto a proper subspace of $A^2(\Om^*)$,
    \item [$(ii)$] $\mathcal L$ is a bounded injective operator from $A^2(\Om)$ onto a proper subspace of both $A^2(\C^2,\omega_{\Om})$, and $A^2(\C^2,\mu_\Om)$. Moreover,
    \[
    \mathcal L(A^2(\Om))\subsetneq A^2(\C^2,\omega_\Om)\subsetneq A^2(\C^2,\mu_\Om).
    \]
    \end{itemize}
\end{theorem}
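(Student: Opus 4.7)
The plan is to exploit the complete Reinhardt symmetry of $\Omega = \{|\zeta_1|+|\zeta_2| < 1\}$ in both parts. Since $\Omega$ is complete Reinhardt, so is its dual complement, and a direct check gives $\Omega^* = \mathbb{D}^2$; moreover the support function $H_\Omega(z) = \max(|z_1|,|z_2|)$ depends only on $(|z_1|,|z_2|)$, so the weight $\omega_\Omega$ is torus-invariant as well. The monomials will therefore form orthogonal bases of $A^2(\Omega)$, $A^2(\Omega^*)$, and $A^2(\mathbb{C}^2, \omega_\Omega)$; let their squared norms be $c_\beta$, $d_\beta = \pi^2/((\beta_1+1)(\beta_2+1))$, and $e_\beta$, respectively. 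Expanding $(1-\langle\zeta,z\rangle)^{-3}$ and $e^{\langle\zeta,z\rangle}$ as power series in $\zeta$ and integrating term-by-term against $\overline{\zeta^\beta}$ over $\Omega$, only the single monomial $\zeta^\beta$ of each expansion survives, producing the diagonal formulas
\[
\mathcal{F}_3(\zeta^\beta)(z) = \lambda_\beta z^\beta, \qquad \mathcal{L}(\zeta^\beta)(z) = \frac{c_\beta}{\beta!}\, z^\beta,
\]
with $\lambda_\beta = (2/\pi^2)\binom{|\beta|+2}{2}\binom{|\beta|}{\beta_1} c_\beta > 0$. Hence both operators are automatically injective, and both boundedness and surjectivity reduce to upper and lower bounds on the ratios $R_\beta^{(\mathcal{F})} := \lambda_\beta^2 d_\beta / c_\beta$ and $R_\beta^{(\mathcal{L})} := (c_\beta/\beta!)^2 e_\beta / c_\beta$.

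For part (i), a beta-function integral gives $c_\beta = 4\pi^2(2\beta_1+1)!(2\beta_2+1)!/(2|\beta|+4)!$, so $R_\beta^{(\mathcal{F})}$ is an explicit ratio of factorials. I expect $\binom{2m}{m} \le 4^m$ applied in each variable to yield $\sup_\beta R_\beta^{(\mathcal{F})} < \infty$ (hence $\mathcal{F}_3$ is bounded), while Stirling's formula along the diagonal $\beta = (m,m)$ gives $R_{(m,m)}^{(\mathcal{F})} \asymp m^{-5/2}$. Choosing any $(a_m) \in \ell^2$ with $\sum |a_m|^2 m^{5/2} = \infty$, the function $\sum_m a_m (z_1 z_2)^m / \sqrt{d_{(m,m)}}$ will lie in $A^2(\Omega^*)$ while its formal $\mathcal{F}_3$-preimage has infinite $A^2(\Omega)$-norm, completing part (i).

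Part (ii) will follow the same template once $e_\beta$ is computed. Since $H_\Omega$ is only Lipschitz, the Bedford--Taylor Monge--Amp\`ere current $(dd^c H_\Omega)^2$ is a singular positive measure supported on $\{|z_1| = |z_2|\}$; I plan to compute it by regularizing $H_\Omega$ via the smooth family $H_\epsilon = \tfrac{1}{2}(|z_1|+|z_2|+\sqrt{(|z_1|-|z_2|)^2 + \epsilon^2})$ and passing $\epsilon\to 0$. The resulting torus-invariant, $1$-homogeneous limit should reduce $e_\beta$ to a one-variable Gamma-function integral of the form $\int_0^\infty r^{2|\beta|+c_0} e^{-2r}\,dr$, and the analogous Stirling analysis will give $\sup_\beta R_\beta^{(\mathcal{L})} < \infty$ together with $R_{(m,m)}^{(\mathcal{L})} \to 0$. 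The hard part will be this Monge--Amp\`ere step: the regularization itself is standard, but the limit measure must be tracked carefully enough to extract sharp factorial asymptotics for $e_\beta$, and one must separately check that $R_\beta^{(\mathcal{L})}$ remains bounded in the axis directions $\beta_2 = 0$, where the monomial integrand has little overlap with the diagonal support of the limit measure.
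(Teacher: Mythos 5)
Your overall strategy --- exploit the complete Reinhardt structure, diagonalize $\mathcal F_3$ and $\mathcal L$ in the monomial basis, compute the squared monomial norms $c_\beta$, $d_\beta$, $e_\beta$ and the diagonal multipliers, and reduce boundedness and non-surjectivity to two-sided estimates on the ratios $R_\beta$ --- is essentially the route the paper takes in Lemmas~\ref{le:power series} and~\ref{le:Fan series} and the ensuing Stirling analysis. Your formulas for $c_\beta$ and $\lambda_\beta = \tfrac{2}{\pi^2}\binom{|\beta|+2}{2}\binom{|\beta|}{\beta_1}c_\beta$ are consistent (up to an irrelevant overall constant in $c_\beta$) with~\eqref{eq:norm eq diamond} and~\eqref{eq:Fan coeff}, and the observation that surjectivity/boundedness become $\inf R_\beta > 0$ / $\sup R_\beta < \infty$ is exactly what drives the paper's argument.

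However, there is a concrete numerical error in the diagonal asymptotics: the claimed $R_{(m,m)}^{(\mathcal F)} \asymp m^{-5/2}$ is not correct. Writing $R_{(m,m)} \asymp \binom{2m+2}{2}^2\binom{2m}{m}^2 \, c_{(m,m)}\, d_{(m,m)}$ and using $\binom{2m}{m}^2 \asymp 16^m/m$, $c_{(m,m)}\asymp ((2m+1)!)^2/(4m+4)! \asymp m^{-3/2}\,16^{-m}$, and $d_{(m,m)}\asymp m^{-2}$, one gets $R_{(m,m)}\asymp m^{4-1-3/2-2}=m^{-1/2}$. This matches the paper's factor $m_1^{-1/2}+m_2^{-1/2}$ in the Stirling estimate after~\eqref{eq:Fan coeff}, and it is also what makes the paper's choice $t_{k,k}=(2k+1)^{1/4}$ land exactly on the harmonic series. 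Your construction of a non-preimage function ``choose $(a_m)\in\ell^2$ with $\sum|a_m|^2 m^{5/2}=\infty$'' should be corrected to $\sum|a_m|^2 m^{1/2}=\infty$; with that fix the argument goes through unchanged. A parallel correction applies to the exponent you will find in part (ii); there the paper obtains $R_{(m,m)}^{(\mathcal L)}\asymp m^{-1/2}$ as well.

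For part (ii), your plan to compute $(dd^cH_\Om)^2$ by regularizing $\max(|z_1|,|z_2|)$ is reasonable and would reproduce what the paper gets, but the paper avoids the analysis entirely by quoting an explicit formula (via \cite{BS08} and \cite[Section~6.2(iv)]{AC24}) that $(dd^cH_\Om)^2$ is a constant multiple of $dr\,d\psi_1\,d\psi_2$ on the cone $\{|z_1|=|z_2|\}$. Your instinct to double-check the axis directions $\beta_2=0$ is sound, but be aware of what actually happens: $R_{(m,0)}^{(\mathcal L)}\asymp m^{-1}$ (the cone $|z_1|=|z_2|$ penalizes axis monomials even more), so the axes cause no trouble for boundedness and in fact give a second source of non-surjectivity. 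The only thing left to verify for boundedness of $\mathcal L$ is a uniform upper bound on $R_\beta^{(\mathcal L)}$ over \emph{all} $\beta$, which the paper handles by the two-variable Stirling estimate $\Gamma(2|\beta|+\tfrac{7}{2})(2\beta_1+1)!(2\beta_2+1)! / \big(4^{|\beta|}(|\beta|+2)(\beta_1!)^2(\beta_2!)^2(2|\beta|+3)!\big)\approx \sqrt{\beta_1\beta_2}/|\beta|^{3/2}\lesssim 1$; you should make sure your version of the estimate is uniform in $\beta$, not just along the diagonal.
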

}
We provide an outline of the proofs. Let $\Om$ be as in Theorem~\ref{th:main1}. Then, it admits a strongly $\C$-convex neighborhood basis, and hence, $\overline\Om$ is $\C$-convex; see {\cite[page 27]{APS}}. Thus, by the duality result stated earlier, $\F_{n+1}\ltt\hol'\ltt\overline\Om\rtt\rtt=\hol\ltt\Om^*\rtt$. In particular, $\F_{n+1}$ maps $A^p(\Om)$ to $\hol\ltt\Om^*\rtt$. Thus, to prove Theorem~\ref{th:main1}, it suffices to establish that, for each $p\in(1,\infty)$, 
\begin{itemize}
    \item [$1.$] $\mathcal F_{n+1}$ is an $L^p(\Om)$-$L^p(\Om^*)$-bounded operator,
    \item [$2.$] $\mathcal F_{n+1}$ is injective on $A^p(\Om)$, and
    \item [$3.$] $\mathcal F_{n+1}$ maps $A^p(\Om)$ onto $A^p(\Om^*)$.    
\end{itemize}
For $1$, we use the domain's strong $\C$-convexity to convert  $\mathcal F_{n+1}$ into a singular integral operator on $\Om^*$. This operator is then compared with a projection operator, $\mathcal B_{\Om^*}$, on $L^p(\Om^*)$. This projection operator is a simpler version of an operator used in the literature to study the regularity properties of the Bergman projection on strictly pseudoconvex domains via a Kerzman--Stein-type approach; see \cite{LI84}, \cite[\S~7.1]{Ra98}, and \cite{LS12}. Lemma~\ref{le:bdd of LS ker} is the key boundedness property of $\mathcal B_{\Om^*}$ that we need, and this result may be of independent interest. For both $2$ and $3$, we use that $A^p(\Om)'
$ is isomorphic to $A^q(\Om)$ when $p^{-1}+q^{-1}=1$; see Lemma~\ref{le:dual of Ap}. This duality allows us to show that $\theta\circ\iota$ in \eqref{eq:inclusion} is injective, following which, the injectivity of $\F_{n+1}$ on $\hol'\ltt\overline\Om\rtt$ gives $2$. For $3$, we show that each function $g$ in $A^p(\Om^*)$ induces a bounded functional on $A^q(\Om)$, and thus, can be mapped to a function $\phi_g$ in $A^p(\Om)=A^q(\Om)'$. Using a Cauchy--Fanttapi{\`e}-type reproducing formula for $A^p(\Om)$ --- see Lemma~\ref{le:repr} --- we show that $\mathcal F_{n+1}(\phi_g)$ is, in fact, $g$.

To prove Theorem~\ref{th:PW bergman}, we relate $\F_{n+1}$ with $\mathcal L$ via the following commutative diagram:
\bes \begin{tikzcd}
 A^2(\Om) \arrow{dr}{\mathcal L}  \arrow{r}{\mathcal F_{n+1}} & A^2(\Om^*)  
 \\%
& A^2(\Cn,\omega_\Om) \arrow[swap]{u}{\mathfrak B_n}
&  
\end{tikzcd}
\ees
where $\mathfrak B_n:F\mapsto\ltt z\mapsto \int_0^\infty F(tz)t^ne^{-t}dt\rtt$ is known as the Borel transform. Owing to Theorem~\ref{th:main1}, it suffices to show that $\mathfrak B_n$ is a normed space isomorphism between $A^2(\Cn,\omega_\Om)$ and $A^2(\Om^*)$.

Let $\Om$ be as in Theorem~\ref{th:counter}. Then, both $\Om$ and $\Om^*$ are complete Reinhardt domains (in fact, $\Om^*$ is the unit polydisc in $\C^2$). This allows us to give characterizations of Bergman-space functions on both domains using power-series representations. We produce an explicit element in $A^2(\Om^*)$ that is not in the image of $\mathcal F_3$, and an element in $A^2(\C^2,\omega_\Om)$, {and hence in $A^2(\C^2,\mu_\Om)$} which is not in the image of $\mathcal L$.
\vspace{2pt}
\medskip

\noindent \textbf{Acknowledgements.} I am deeply grateful to my thesis advisor, Purvi Gupta, for her insightful guidance, many helpful ideas, and for engaging in regular discussions throughout the course of this project. I would also like to thank her for helping me with the writing of this paper. I also thank Mihai Putinar for valuable discussions and for suggesting that the Fantappi{\`e} kernel with exponent $n+1$ might be effective in our context, as well as for highlighting connections of the Fantappi{\`e} transform to various other topics that may lead to promising future directions. Finally, I am grateful to the anonymous referee for providing many constructive comments that significantly improved this paper.

This work is supported by a scholarship from the Indian Institute of Science, and the DST-FIST
programme (grant no. DST FIST-2021 [TPN-700661]).

\section{Preliminaries}
In this section, we collect some preparatory results and observations for our main objects of study. 
\subsection{Notation} The following notation will be used throughout the paper. 
\begin{itemize}
    \item[(1)] $\mathbb B^n$ denotes the unit Euclidean ball in $\C^n$.
     \item [(2)] $d=\partial+\overline{\partial}$ denotes the standard exterior derivative.
    \item [(3)] $d^c={i}\ltt{\overline{\partial}-\partial}\rtt.$
    \item [(4)] Given a bounded domain $\Om$, $\sigma_\Om$ denotes the Euclidean surface area measure on $b\Om$.
    \item [(5)] Given two $\R$-valued functions $f$ and $g$ on set a $X$,
    \begin{itemize}
   \item $g\lesssim f$ on $X$ denotes the existence of $C_1>0$ such that $C_1 g(x)\leq f(x)$, for all $x\in X$.
   \item $g\approx f$ on $X$ denotes the existence of $C_1,C_2>0$ such that $C_1 g(x)\leq f(x)\leq C_2 g(x)$, for all $x\in X$. 
    \end{itemize}
    \item[(6)] $\N$ denotes the set of all natural numbers, i.e., the set of all positive integers union $\{0\}$.
   {
    \item[(7)] Given a bounded $\cont^2$-smooth domain $\Om$ that is star-convex with respect to the origin, $m_\Om$ denotes the Minkowski functional of $\Om$, defined in \eqref{eq:mink def}.   
    \item[(8)] Given a bounded domain $\Om$, $H_\Om$ denotes the function as defined in \eqref{eq:supprt def}.
    \item[(9)] Given a bounded $\cont^2$-smooth convex domain $\Om$ containing the origin, $\rho_\Om$ denotes the defining function as given in \eqref{eq:defn fns}.
    \item[(10)] Given a domain $\Om$, and $p\in(1,\infty)$, $A^p(\Om)$ denotes the $p$-Bergman space of $\Om$, defined in \eqref{eq:def berg}. 
    \item[(11)] Given a domain $\Om$, $\hol_{exp}(\Om)$ denotes a subspace of exponential entire functions, defined in \eqref{eq:Borel dfn cond}. 
    \item[(12)] Given a $g\in A^p(\Om)$ and $k\in\Z_+$,
    $\mathcal F_k(g)$ denotes the Fantappi{\`e} transform of $g$, defined in \eqref{eq:Fan berg}.
    \item[(13)] Given a $f\in A^2(\Om)$,
    $\mathcal L(f)$ denotes the Laplace transform of $f$, defined in \eqref{eq:lap berg}.
    \item[(14)] Given a $F\in\hol_{exp}(\Om)$, $\mathfrak B_n(F)$ denotes the Borel transform of $F$, defined in \eqref{eq:def borel}.
    }
\end{itemize}
 Let $D\subset \C^n$ be a bounded $\cont^2$-smooth domain that is star-convex with respect to the origin. The Minkowski functional of $D$ is given by
\bea\label{eq:mink def}
m_D(z)=\inf\left\{t>0:t^{-1}z\in D\right\},\quad z\in\C^n.
\eea
Due to the star-convexity of $D$, $m_D$ is a positively $1$-homogeneous function on $\C^n$, i.e., $f(tz)=tf(z)$ for all $z\in\C^n$ and $t>0$. Moreover, $D=\{z:m_D(z)<1\}$, and $bD=\{z:m_D(z)=1\}$. We note the following result on the regularity of $m_D$.

\begin{lemma}\label{le:smth minkw}
    Let $D\subset\C^n$ be a $\cont^2$-smooth bounded domain that is star-convex with respect to the origin. Further, assume that $D$ has only non-radial tangents, i.e., for any $\zeta\in bD$, $\rea\langle\zeta,\overline{\eta}(\zeta)\rangle\neq 0$, where $\eta(\zeta)$ is a normal to $bD$ at $\zeta$. Then $m_D$ is $\cont^2$-smooth on $\C^n\setminus\{0\}$.
\end{lemma}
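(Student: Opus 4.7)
The plan is to obtain $m_D$ via the implicit function theorem applied to a defining function of $D$. Since $D$ is $\cont^2$-smooth, choose a $\cont^2$ defining function $\rho$ for $D$, so that $D = \{\rho < 0\}$, $bD = \{\rho = 0\}$, and $d\rho \neq 0$ on $bD$. Define $F: \ltt\C^n \setminus \{0\}\rtt \times (0, \infty) \to \R$ by $F(z, t) = \rho(z/t)$, which is $\cont^2$ in its arguments. For a fixed $z_0 \neq 0$, put $t_0 = m_D(z_0)$ and $\zeta_0 = z_0/t_0$; then $\zeta_0 \in bD$ and $F(z_0, t_0) = 0$.

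The key computation is
\bes
\frac{\partial F}{\partial t}(z_0, t_0) = -\frac{2}{t_0}\,\rea\sum_{j=1}^n (\zeta_0)_j\,\partial_{z_j}\rho(\zeta_0).
\ees
Taking $\eta(\zeta_0) = (\partial_{z_1}\rho(\zeta_0), \ldots, \partial_{z_n}\rho(\zeta_0))$ as a complex normal to $bD$ at $\zeta_0$, the non-radial tangency hypothesis $\rea\langle \zeta_0, \eta(\zeta_0)\rangle \neq 0$ forces $\partial F/\partial t(z_0, t_0) \neq 0$. The implicit function theorem then yields a neighborhood $U$ of $z_0$ and a $\cont^2$ function $t: U \to (0, \infty)$ with $F(w, t(w)) = 0$ and $t(z_0) = t_0$.

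To finish, I would show $m_D = t$ on $U$. Star-convexity of $D$ with respect to the origin implies that for each $w \neq 0$ the set $\{r > 0 : rw \in D\}$ is nonempty, open, and an interval of the form $(0, r_0(w))$ (by boundedness of $D$), so the radial ray from the origin through $w$ meets $bD$ in the unique point $r_0(w)\, w$. Consequently the equation $w/t \in bD$ has a unique positive solution $t = m_D(w)$, forcing $t(w) = m_D(w)$ on $U$. Since $z_0 \neq 0$ was arbitrary, $m_D \in \cont^2\ltt\C^n \setminus \{0\}\rtt$. I do not foresee any real obstacle here; the only care needed is in translating the complex-form non-radial tangency into the real condition $\nabla_{\R} \rho(\zeta_0) \cdot \zeta_0 \neq 0$ which is what the implicit function theorem consumes, a routine consequence of $\partial_{z_j} = \tfrac{1}{2}(\partial_{x_j} - i\partial_{y_j})$.
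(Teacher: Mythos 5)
Your proof is correct and uses essentially the same argument as the paper: apply the implicit function theorem to $\rho(z/t)$ and use the non-radial tangency hypothesis to verify that $\partial_t\rho(z/t)\neq 0$ at $t=m_D(z)$. You are slightly more careful than the paper in one respect, namely in explicitly checking that the implicitly defined function $t(w)$ coincides with $m_D(w)$ (via uniqueness of the intersection of the radial ray with $bD$, coming from star-convexity and boundedness), a point the paper takes for granted.
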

\begin{proof}
Since $bD$ is $\cont^2$-smooth, there exists a defining function of $D$, $\rho$, which is $\cont^2$-smooth in $\C^n$. We define $\phi:\C^n\times \ltt\R\setminus\{0\}\rtt\rightarrow\R$ as 
\[
\phi\ltt z,t\rtt=\rho\ltt t^{-1}z_1,t^{-1}z_2,\cdots,t^{-1}z_n\rtt,\quad \ltt z,t\rtt\in\C^n\times\ltt\R\setminus\{0\}\rtt.
\]
As $\rho\in \cont^2\ltt\C^n\rtt$, it follows that $\phi\in\cont^2\ltt\C^n\times\ltt\R\setminus\{0\}\rtt\rtt$. Also, since $m_D$ is positively $1$-homogeneous, for any $z\in\C^n\setminus\{0\}$, $\frac{z}{m_D(z)}\in bD$. Consequently, 
\[
\phi\ltt z,m_D(z)\rtt=0,\quad \forall z\in\C^n\setminus\{0\}.
\]
Let us now fix a $\tilde z\in\C^n\setminus\{0\}$. We claim that $\frac{\partial \phi}{\partial t}(\tilde z,m_D(\tilde z))\neq 0$. Suppose $\frac{\partial \phi}{\partial t}(\tilde z,m_D(\tilde z))=0$. Then, using the chain rule, we get that  
\beas
\frac{\partial \phi}{\partial t}(\tilde z,m_D(\tilde z))=\frac{-1}{m_D(\tilde z)}\Rea\left\langle2\partial \rho\ltt\frac{\tilde z}{m_D\ltt\tilde z\rtt}\rtt,\frac{\tilde z}{m_D\ltt\tilde z\rtt}\right\rangle=0.
\eeas
As $m_D\ltt\tilde z\rtt\neq 0$, it must be that
$
\Rea\left\langle2\partial \rho\ltt\frac{\tilde z}{m_D\ltt\tilde z\rtt}\rtt,\frac{\tilde z}{m_D\ltt\tilde z\rtt}\right\rangle=0$. This is a contradiction, since $D$ has only non-radial tangents by assumption. Thus, we may apply the implicit function theorem to $\phi$ and conclude that $m_D$ is a $\cont^2$-smooth function on $\C^n\setminus\{0\}$.   
\end{proof}

The next result allows us to freely move between $D$ and $D^*$ under the assumption of strong $\C$-convexity.

\begin{lemma}\label{le:cov dom}
Let $D\subset\C^n$ be a bounded $\cont^2$-smooth strongly $\C$-convex domain that is star-convex with respect to the origin. Further, assume that $D$ has only non-radial tangents, i.e., for any $\zeta\in bD$, $\rea\langle\zeta,\overline\eta(\zeta)\rangle\neq 0$, where $\eta(\zeta)$ is a normal to $bD$ at $\zeta$. Then, the map $T_{D}:D\rightarrow D^*$, defined as
\bea\label{eq:cov map}
T_{D}\ltt\zeta\rtt=\begin{cases}
m_{D}^2(\zeta)\dfrac{\partial m_{D}(\zeta)}{\left\langle\partial m_{D}(\zeta),\zeta\right\rangle},&\quad \zeta\in D\setminus\{0\},\\
0,&\quad \zeta=0,
\end{cases}
\eea
\begin{itemize}
    \item [(a)] is invertible, with $T_{D^*}$ as the inverse map,
    \item [(b)] is a homeomorphism between $D$ and $D^*$, and
    \item [(c)] is a $\cont^1$-diffeomorphism between $D\setminus\{0\}$ and $D^*\setminus\{0\}$.
\end{itemize}
Moreover, if $f$ is a measurable function on $D^*$, then
\bea\label{eq:cov}
\int_{D^*} f\ltt\zeta\rtt dV(\zeta)=\int_{D} f\ltt T_{D}(\zeta)\rtt h_{D}(\zeta) dV(\zeta),
\eea
where $h_{D}$ is a positive function on $D$, which is bounded above and below on $D$ by positive constants.
    \end{lemma}
    \begin{proof}
        %Since $\Om$ is star convex with respect to $0$, it immediately follows that $\Om^*$ is also star convex with respect to $0$. %Moreover, as $\Om$ is strongly $\C$-convex, $\Om^*$ is also strongly $\C$-convex, see \cite{Li02}. It is also known that
        Let $S_{D}:bD\rightarrow bD^*$ be given by 
            \bes            S_{D}\ltt\eta\rtt=\frac{\partial m_{D}(\eta)}{\left\langle\partial m_{D}(\eta),\eta\right\rangle},\qquad \eta\in bD.
            \ees
        The following implications follow from \cite{Li02}.
        \begin{itemize}
        \item [(i)] $D^*$ is also a  bounded $\cont^2$-smooth strongly $\C$-convex domain that is star-convex with respect to the origin, and $m_{D^*}$ is $\cont^1$-smooth on $\C^n\setminus\{0\}$; see \cite[Lemma~$9$]{Li02}. As $m_{D^*}$ is positively $1$-homogeneous, by Euler's identity, for any $\xi\in bD^*$,
\bes\label{eq:euler}
\rea\left <\partial m_{D^*}(\xi),\xi\right>= \frac{m_{D^*}(\xi)}{2}= \frac{1}{2}.
\ees  Hence, $D^*$ has only non-radial tangents.  
        %%%%
        \item[(ii)] $S_{D}$ is a $\cont^1$-diffeomorphism from $bD$ onto $bD^*$, whose inverse is $S_{D^*}$; see \cite[page~ $295-296$]{Li02}.
            \item[(iii)] If $g$ is a measurable function with respect to $\sigma_{D^*}$, then,
            \[
            \int_{bD^*} g\ltt\xi\rtt d\sigma_{D^*}(\xi)=\int_{bD} g\ltt S_{D}(\eta)\rtt \widetilde{h}(\eta) d\sigma_{D}(\eta),
            \]
            for some positive continuous function $\widetilde{h}$ on $bD$; see \cite[Lemma~$25$]{Li02}.
            \item [(iv)] If $f$ is a Lebesgue integrable function on $D^*$, then 
\bea\label{eq:polar coordinate}
\int_{D^*} f\ltt \zeta\rtt dV(\zeta)&=&\int_0^1\int_{bD}f\ltt rS_{D}(\eta)\rtt r^{2n-1} {j_D(\eta)} dr\,  d\sigma_{D}(\eta)\nonumber\\
&=&\int_0^1\int_{bD^*}f\ltt r\xi\rtt r^{2n-1} {\ell_{D^*}(\xi)} dr\,  d\sigma_{D^*}(\xi),
\eea
for some positive continuous functions $j_D$ and $\ell_{D^*}$ on $bD$ and $bD^*$, respectively; see \cite[page 308]{Li02}.
%\item[(iv)]
%if $g$ is a Lebesgue integrable function on $\Om^*$, then 
%\beas
%\int_{\Om^*} g\ltt \zeta\rtt dV(\zeta)&=&\int_0^1\int_{b\Om}g\ltt rS_{b\Om}(\xi)\rtt r^{2n-1} {j_2(\eta)} dr\,  d\sigma_{\Om}(\xi)\\
%&=&\int_0^1\int_{b\Om^*}f\ltt r\eta\rtt r^{2n-1} {j_2^*(\eta)} dr\,  d\sigma_{\Om^*}(\eta),
%\eeas
%for some positive continuous functions $j_2$ and $j_2^*$ on $b\Om$ and $b\Om^*$, respectively.
        \end{itemize}
    We write an arbitrary $\zeta\in D$ as $\zeta=r\eta$, where $r\in(0,1)$, and $\eta\in bD$. Then using the $1$-homogenity of $m_{D}$, it follows that $T_{D}(\zeta)=rS_{D}\ltt\eta\rtt$. From (ii), it is immediate that $T_{D}$ is a homeomorphism between $D$ and $D^*$. Furthermore, since $m_{D}$ is $\cont^2$-smooth on $\C^n\setminus\{0\}$, $T_{D}$ is a $\cont^1$-diffeomorphism between $D\setminus\{0\}$ and $D^*\setminus\{0\}$. 
    %The fact that $T_\Om$ is a $\cont^1$-diffeomorphism between $\O$ followed from the fact that $m_\Om$ is $\cont^2$ on $\C^n\setminus\{0\}$.  
    Finally, using (iv), we get that
    \beas
    \int_{D^*} f\ltt \zeta\rtt dV(\zeta)
    &=&\int_0^1\int_{bD}f\ltt rS_D\ltt\eta\rtt\rtt r^{2n-1} {j_D(\eta)} d\sigma_{D}(\eta)\,dr\\
&=&\int_0^1\int_{bD}f\ltt T_{D}(r\eta)\rtt r^{2n-1} {j_D(\eta)} d\sigma_{D}(\eta)\,dr\\
%    &=&\int_0^1\int_{bD}f\ltt T_{D}(r\eta)\rtt r^{2n-1} {j_D(\eta){\ell_D(\eta)}^{-1}} \ell_D(\eta)d\sigma_{D}(\eta)\,dr\\
    &=&\int_0^1\int_{bD}(f\circ T_{D})(r\eta) r^{2n-1} {h_{D}(r\eta)} \ell_D(\eta)d\sigma_{D}(\eta)\,dr\\
     &=&\int_{D} (f\circ T_{D})(\zeta) h_{D}(\zeta) dV(\zeta),
    \eeas
    where $h_{D}(r\eta)=j_D(\eta){\ell_D(\eta)}^{-1}$, for $r\in(0,1), \eta\in bD$. We obtained the last equality by applying (iv) to $D$, instead of $D^*$. This concludes the proof.
    \end{proof}

Note that if the domain $\Om\subset\C^n$ satisfies the hypothesis of Theorem~\ref{th:main1}, then $\Om^*$ satisfies the hypothesis of Lemma~\ref{le:cov dom} due to strong convexity, and hence $(\Om^*)^*=\Om$ also satisfies the hypothesis of Lemma~\ref{le:cov dom}.

Now, let 
\be\label{eq:defn fns}
\rho_D(\zeta)= m_D^2\ltt\zeta\rtt-1,\quad\zeta\in\C^n.
\ee
Since $m_D$ is a $C^2$-smooth function on $\C^n\setminus\{0\}$, $\rho_D$ is a $C^2$-smooth defining function of $D$. In fact, the following lemma shows that $\rho_D$ is also strongly convex on $\C^n\setminus\{0\}$.

%%%%%%%%%%%%%%%%%%%%%
%%%%%%%%%%%%%%%%%%%%%
\begin{lemma}\label{le:str cvx of Minkwski}
Let $D\subset\C^n$ be a bounded $\cont^2$-smooth strongly convex domain containing the origin. Then $\rho_D(\zeta)=m_D^2(\zeta)-1$ is a strongly convex function on $\C^n\setminus\{0\}$, i.e.,
$\operatorname{Hess}(\rho_D)(\zeta)$ is positive definite, for all $\zeta\in\C^n\setminus\{0\}$.
\end{lemma}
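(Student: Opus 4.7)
The plan is to exploit the positive $2$-homogeneity of $g := m_D^2$ (so that $\operatorname{Hess}(\rho_D) = \operatorname{Hess}(g)$) to reduce the problem to the boundary $bD$, and then perform a radial-tangential decomposition at each boundary point. Differentiating $g(tz) = t^2 g(z)$ twice in $z$ at $t=1$ yields $\operatorname{Hess}(g)(tz) = \operatorname{Hess}(g)(z)$ for every $t>0$. Since every $z \in \Cn \setminus \{0\}$ can be written as $m_D(z)\, p$ with $p = z/m_D(z) \in bD$, it suffices to prove that $\operatorname{Hess}(g)(p)$ is positive definite for every $p \in bD$.

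Fix $p \in bD$. Two useful consequences of Euler's identity applied to the positively $2$-homogeneous $g$ are $\nabla g(p) \cdot p = 2 g(p) = 2$ and $\operatorname{Hess}(g)(p)\, p = \nabla g(p)$. The first shows that $p$ is transverse to the real tangent space $T_p := \{v \in \Cn : v \cdot \nabla g(p) = 0\}$ of $bD$ at $p$, so every $v \in \Cn$ (viewed as $\R^{2n}$) admits a unique decomposition $v = v_T + \alpha p$ with $v_T \in T_p$ and $\alpha \in \R$. The second identity gives
\[
\operatorname{Hess}(g)(p)(p,p) = p \cdot \nabla g(p) = 2, \qquad \operatorname{Hess}(g)(p)(v_T, p) = v_T \cdot \nabla g(p) = 0 \text{ for all } v_T \in T_p,
\]
so that $\operatorname{Hess}(g)(p)(v,v) = \operatorname{Hess}(g)(p)(v_T, v_T) + 2\alpha^2$.

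The remaining task is to show that $\operatorname{Hess}(g)(p)|_{T_p \times T_p}$ is positive definite, and this is where the strong convexity of $D$ enters. Since $\rho_D$ is a $\cont^2$-smooth defining function for $D$, and any two $\cont^2$-smooth defining functions $\rho_1, \rho_2$ of $D$ differ by a smooth positive factor $h$ in a neighborhood of $bD$, a direct computation using $\rho_1(p) = 0$ shows that at $p \in bD$,
\[
\operatorname{Hess}(\rho_2)(p) = h(p)\operatorname{Hess}(\rho_1)(p) + d\rho_1(p) \otimes dh(p) + dh(p) \otimes d\rho_1(p),
\]
so that $\operatorname{Hess}(\rho_2)(p)|_{T_p \times T_p} = h(p)\operatorname{Hess}(\rho_1)(p)|_{T_p \times T_p}$. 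Therefore positive definiteness of the restricted Hessian on $T_p$ is a property of the domain $D$, independent of the choice of defining function. Hence the strong convexity of $D$ gives that $\operatorname{Hess}(\rho_D)(p) = \operatorname{Hess}(g)(p)$ is positive definite on $T_p$. Combining this with the radial computation, $\operatorname{Hess}(g)(p)(v,v) > 0$ for every nonzero $v \in \Cn$, since either $v_T \neq 0$ or $\alpha \neq 0$.

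The main obstacle is mostly bookkeeping rather than a genuine analytic hurdle: one must carefully record the transformation formula for the Hessian under change of defining function and cleanly separate the tangential and radial components of an arbitrary vector. The real input is confined to Euler's identity and the defining-function-independent nature of tangential strong convexity.
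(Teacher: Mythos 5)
Your proof is correct and rests on the same two essential pillars as the paper's: the positive homogeneity of $m_D$ to reduce to $\zeta\in bD$, and the defining-function-independence of positive definiteness of the tangential Hessian to invoke the strong convexity hypothesis. Where you and the paper diverge is in how the radial direction is handled. The paper expands, via the product rule,
\[
w^T\operatorname{Hess}(m_D^2)(\zeta)\,w = 2\bigl(\nabla m_D(\zeta)\cdot w\bigr)^2 + 2\,m_D(\zeta)\,w^T\operatorname{Hess}(m_D)(\zeta)\,w,
\]
and then argues by cases: for any $w\neq 0$, either the first term is strictly positive (with the second term merely $\geq 0$, by ordinary convexity of $m_D$), or $w$ is tangential and the second term is strictly positive by strong convexity. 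You instead work with $g=m_D^2$ directly, use Euler's identities for the $2$-homogeneous $g$ to get $\operatorname{Hess}(g)(p)\,p = \nabla g(p)$ and $\nabla g(p)\cdot p = 2$, and make the clean orthogonal splitting $v=v_T+\alpha p$, obtaining exactly $\operatorname{Hess}(g)(p)(v,v)=\operatorname{Hess}(g)(p)(v_T,v_T)+2\alpha^2$ with vanishing cross term. Your route avoids appealing to ordinary convexity of $m_D$ at all and gives a genuine block-decomposition of the quadratic form rather than a case split; the paper's route avoids the second Euler identity and the change-of-defining-function computation, making up for it with the $\geq 0$ estimate from convexity. Both are of comparable length and difficulty; neither is strictly more elementary. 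One small caveat in your write-up: the factor $h = \rho_2/\rho_1$ between two $\cont^2$ defining functions is generically only $\cont^1$, so the Hessian formula you write requires a standard mollification-or-approximation caveat (or a direct appeal to the invariance of the second fundamental form). The paper faces the same issue implicitly at its display \eqref{eq:tangent}, so this is not a gap peculiar to your argument.
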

\begin{proof}
Let $\ltt \zeta,w\rtt\in \ltt\C^n\setminus\{0\}\rtt^2$. Writing $w$ in real co-ordinates as $w=\ltt u_1,u_2,\cdots,u_{2n}\rtt$, we get that,
\beas
w^T \cdot \operatorname{Hess}(\rho_D)(\zeta)\cdot w &=& 
\sum_{j,k=1}^{2n} \frac{\partial^2\rho_D}{\partial x_j \partial x_k}(\zeta) u_ju_k\\
&=&2\sum_{j,k=1}^{2n} \ltt\frac{\partial m_D}{\partial x_j}(\zeta)\frac{\partial m_D}{\partial x_k}(\zeta)+m_D(\zeta) \frac{\partial^2 m_D}{\partial x_j \partial x_k}(\zeta)\rtt  u_ju_k \\
&=&2\ltt \sum_{j=1}^{2n}\frac{\partial m_D}{\partial x_j}(\zeta) u_j\rtt^2+2m_D(\zeta) \sum_{j,k=1}^{2n}\frac{\partial^2 m_D}{\partial x_j \partial x_k}(\zeta) u_ju_k.
\eeas
Since $m_D$ is positively $1$-homogeneous, the right-hand side is $0$-homogeneous in $\zeta$,  i.e., $f(t\zeta)=f(\zeta),$ for all $\zeta\in\C^n\setminus\{0\}$ and $t>0$. 
Thus, it suffices to show that
\be\label{eq:claim str cvx}
2\ltt \sum_{j=1}^{2n}\frac{\partial m_D}{\partial x_j}(\zeta) u_j\rtt^2+2m_D(\zeta) \sum_{j,k=1}^{2n}\frac{\partial^2 m_D}{\partial x_j \partial x_k}(\zeta) u_ju_k>0,\quad\forall \zeta\in bD\,\text{and}\,\forall w\in\C^n\setminus\{0\}.
\ee
Now, as $D$ is convex, $m_D$ is a convex function on $\C^n$, which implies that 
\be\label{eq:mink cvx}
\sum_{j,k=1}^{2n}\frac{\partial^2 m_D}{\partial x_j \partial x_k}(\zeta) u_ju_k\geq 0,\quad\text{for all } \zeta\in bD\,\text{and } w\in\C^n\setminus\{0\}.
\ee
Furthermore, as $\rho_D$ is a defining function of $D$, it follows that for any $\zeta\in bD$ and $w\in \C^n\setminus\{0\}$, 
\bea\label{eq:tangent}
\text{if }
\sum_{j=1}^{2n}\frac{\partial m_D}{\partial x_j}(\zeta) u_j=0,
\quad \text{then }
\sum_{j,k=1}^{2n}\frac{\partial^2 m_D}{\partial x_j \partial x_k}(\zeta)u_ju_k>0.
\eea
Thus, \eqref{eq:claim str cvx} follows  from \eqref{eq:mink cvx} and \eqref{eq:tangent}.
\end{proof}

    %%%%%%%%%%%%%%%%%%%%
    \section{Some singular integral operators}\label{S:SIOs}
We now inspect some singular integral operators that are closely related to our problem. Given a bounded strongly convex domain $D\subset\Cn$, let 
        \bea
    B_D\ltt\zeta,z\rtt&=& m_D(\zeta)\left\langle 2\partial m_D(\zeta),\zeta-z\right\rangle+\ltt1-m_D^2(\zeta)\rtt,\quad \ltt\zeta,z\rtt\in\overline{D}\times\overline{D}, \label{eq:def of LS ker}\\
     \mathcal B_D(f)(z)&=&\int_{D} f(\zeta) \left(B_D\ltt\zeta,z\rtt\right)^{-n-1} dV(\zeta),\quad f\in L^p(D), z\in D.\label{eq:def of LS SIO}
        \eea  
We first establish the $L^p$-regularity of $\mathcal B_D$. 
 \begin{lemma}\label{le:bdd of LS ker}
Let $D\subset\C^n$ be a bounded strongly convex domain in $\C^n$ containing the origin, and $p\in(1,\infty)$.
   The integral operator denoted by $\left|\mathcal B_D\right|$ defined on $L^p\ltt D\rtt$ by
   \[
    \left|\mathcal B_D\right|(f)(z)=\int_{D} f(\zeta) \left|B_D\ltt\zeta,z\rtt\right|^{-n-1} dV(\zeta),\quad z\in D,
    \]
   is a bounded operator on $L^p\ltt D\rtt$.
\end{lemma}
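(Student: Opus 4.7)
The plan is to derive the $L^p$ boundedness of $|\mathcal B_D|$ from a weighted Schur test, with the weight taken to be a small negative power of the defining function $\rho_D=m_D^2-1$, exploiting the strong convexity supplied by Lemma~\ref{le:str cvx of Minkwski}.

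First, I would rewrite
\bes
B_D(\zeta,z)=\left\langle\partial\rho_D(\zeta),\zeta-z\right\rangle-\rho_D(\zeta),
\ees
exhibiting $B_D$ as the Levi polynomial associated to $\rho_D$. Because $\rho_D$ is strongly convex on $\C^n\setminus\{0\}$, Taylor's theorem gives
\bes
2\rea B_D(\zeta,z)\geq -\rho_D(\zeta)-\rho_D(z)+c|\zeta-z|^2,\qquad \zeta,z\in\overline{D},
\ees
for some $c>0$. Combining this with the trivial inequalities $|B_D|\geq\rea B_D$ and $|B_D|\geq|\Ima B_D|$, and using $-\rho_D=|\rho_D|$ on $\overline D$, one obtains the pseudo-metric type lower bound
\bes
|B_D(\zeta,z)|\gtrsim |\rho_D(\zeta)|+|\rho_D(z)|+|\Ima B_D(\zeta,z)|+|\zeta-z|^2,
\ees
and the comparison $|B_D(\zeta,z)|\approx |B_D(z,\zeta)|$ by the symmetric argument.

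Next, I would establish the central integral estimate: for any $0<\alpha<1$,
\bes
\int_D \frac{|\rho_D(\zeta)|^{-\alpha}}{|B_D(\zeta,z)|^{n+1}}\,dV(\zeta)\;\lesssim\;|\rho_D(z)|^{-\alpha},\qquad z\in D,
\ees
and analogously in the other variable. For this, I would cover $bD$ by finitely many charts in which $(-\rho_D,\Ima B_D(\cdot,z),t)$, with $t\in\R^{2n-2}$ parametrizing the complex tangential directions, serves as a local coordinate system. Iterated integration—first over $t$, then over $\Ima B_D$, then over $-\rho_D$—using the pseudo-metric lower bound at each stage, yields successive factors $(\cdot)^{-2}$ and $(\cdot)^{-1}$, and finally produces $|\rho_D(z)|^{-\alpha}$ when $0<\alpha<1$. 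The contribution from a neighborhood of any interior point (including the origin, where $m_D$ fails to be $\cont^2$) is trivial since $|B_D|$ is bounded below there.

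Finally, I would apply Schur's test to the kernel $|B_D(\zeta,z)|^{-n-1}$ with weight $h(\zeta)=|\rho_D(\zeta)|^{-s}$, choosing $s>0$ small enough that $sp<1$ and $sq<1$ for the conjugate exponent $q$; the integral estimate above (applied in both variables) then verifies the two Schur hypotheses and yields boundedness of $|\mathcal B_D|$ on $L^p(D)$. The principal technical obstacle will be the middle step: carrying out the boundary-chart construction and verifying that the stated lower bound for $|B_D|$ genuinely controls the volume growth uniformly in $z\in D$. Such estimates are well-established in the strongly pseudoconvex setting (cf.\ \cite{LI84,Ra98,LS12}), and strong convexity should render the necessary local analysis rather transparent.
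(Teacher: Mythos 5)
Your proposal is correct and follows essentially the same route as the paper's proof: both derive the pseudo-metric lower bound $|B_D(\zeta,z)|\gtrsim|\rho_D(\zeta)|+|\rho_D(z)|+|\Ima B_D(\zeta,z)|+\|\zeta-z\|^2$ via the Taylor expansion of $\rho_D$ and its strong convexity (Lemma~\ref{le:str cvx of Minkwski}), and both then reduce $L^p$-boundedness to a Schur test with weight $|\rho_D|^{-s}$ — the paper simply defers this last step to \cite[Lemma 4.1]{LS12} rather than sketching the boundary-chart integration and the role of approximate symmetry as you do.
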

\begin{proof} 
We claim that for $(\zeta,z)\in\overline D\times\overline D$, $\rea{B_D}(\zeta,z)$ vanishes if and only if  $\ltt\zeta,z\rtt\in bD\times bD$ and $\zeta=z$. 
When $\zeta,z\in bD$, and $\zeta=z$, clearly, $\rea B_D$ vanishes. For the converse, suppose there is a $(\zeta_0, z_0)\in\overline{D}\times\overline{D}$ such that $\rea B_D(\zeta_0, z_0)=0$. As $m_D$ is positively $1$-homogeneous, by Euler's identity, 
\be\label{eq:euler}
\rea \left\langle2\partial m_D(\zeta),\zeta\right\rangle= m_D(\zeta),\quad\forall\zeta\in \C^n.
\ee
Using this, it follows that $m_D^2(\zeta_0)-m_D(\zeta_0)\rea\left\langle2\partial m_D(\zeta_0),z_0\right\rangle=m_D^2(\zeta_0)-1$. In other words,
\be\label{eq:constraint}
m_D(\zeta_0)\rea\left\langle2\partial m_D(\zeta_0),z_0\right\rangle=1.
\ee
Let us consider the polar set of $D$, denoted as $D^\circ$, and defined by
$$D^\circ=\{z\in\C^n:H_D(z)<1\},$$
where {$H_D(\overline z)$ is the support function of $D$.}
It is known that $2\partial m_D(\zeta)\in bD^\circ$ for all $\zeta\in bD$; see {\cite[Section 2]{Li02}}, and as $\partial m_D$ is $0$-homogeneous, $2\partial m_D(\zeta)\in bD^\circ$ for all $\zeta\in\C^n\setminus\{0\}.$ Hence, by the definition of $D^\circ$, we have that $\rea\left\langle2\partial m_D(\zeta_0),z_0\right\rangle\leq 1$. Since $m_D(\zeta)<1$, whenever $\zeta\in D$, it follows from \eqref{eq:constraint} that $\ltt\zeta_0,z_0\rtt\notin{D}\times \overline D$. Thus,  $\ltt\zeta_0,z_0\rtt\in bD\times \overline{D}$. However, this means that $\rea\left\langle2\partial m_D(\zeta_0),\zeta_0-z_0\right\rangle=0$, which implies that the real tangent space to $bD$ at $\zeta_0$ intersects the point $z_0$. As $D$ is strongly convex, the real tangent space to $bD$ at $\zeta_0$ does not intersect any points in $\overline{D}\setminus\{\zeta_0\}$. Hence, $\zeta_0=z_0$, which proves our claim. %From this claim, it follows that for a fixed $z_0\in D$, there exists a neighborhood of $z_0$, call it $U_{z_0}$, on which $B_D(\zeta,z)\neq0,\,\forall(\zeta,z)\in \overline{D}\times U_{z_0}$. Furthermore, for every $\zeta\in\overline D$, $B_D\ltt\zeta,.\rtt$ is holomorphic on $D$. Hence, it follows that for any $f\in L^p\ltt D\rtt$,  $\mathcal B_D(f)$ is holomorphic on $U_{z_0}$, and hence holomorphic on $D$. 

Writing $B_D$ in terms of $\rho_D$, we have that
\[
B_D\ltt\zeta,z\rtt=\left\langle\partial \rho_D(\zeta),\zeta-z\right\rangle-\rho_D\ltt\zeta\rtt.
\]
We claim that $B_D$ satisfies the following estimate
\bea\label{eq:key est LS ker}
\left|B_D\ltt\zeta,z\rtt\right|\approx \left|\rho_D(\zeta)\right|+\left|\rho_D(z)\right|+\left|\operatorname{Im}\left\langle\partial \rho_D(\zeta),\zeta-z\right\rangle\right|+\|\zeta-z\|^2, \quad \ltt\zeta,z\rtt\in \overline{D}\times \overline{D}.
\eea
To prove this, first choose a small positive number $\delta>0$, and consider the region $G_{\delta}=\{\zeta\in\overline{D}: -\rho_D(\zeta)\leq\delta\}$.
Applying Taylor's theorem %with Peano's form of remainder 
for each $\zeta\in G_\delta$, we get that 
\[
\rho_D(z)=\rho_D(\zeta)-2\Rea \left\langle\partial \rho_D(\zeta),\zeta-z\right\rangle+\rea Q_\zeta(\rho_D,(\zeta-z))+R_\zeta(\rho_D,(\zeta-z))+E(\zeta,z),\quad \forall z\in\overline{D},
\]
where
\beas
Q_\zeta(\rho_D,(\zeta-z))&=&\sum_{j,k=1}^{n}\frac{\partial^2 \rho_D}{\partial \zeta_j \partial \zeta_k}(\zeta) (\zeta_j-z_j)(\zeta_k-z_k),\\
R_\zeta(\rho_D,(\zeta-z))&=&\sum_{j,k=1}^{n}\frac{\partial^2 \rho_D}{\partial \zeta_j \partial \overline{\zeta_k}}(\zeta) (\zeta_j-z_j)\overline{(\zeta_k-z_k)},
\eeas
and $E(\zeta,z)$ is a continuous function on $G_\delta\times\overline D$, satisfying $\lim_{\zeta\rightarrow z}E(\zeta,z)/\|\zeta-z\|^2=0$.
Then 
\[
\widetilde{E}\ltt\zeta,z\rtt=\begin{cases}
\dfrac{E(\zeta,z)}{\|\zeta-z\|^2},&\quad \text{when}\,\zeta\neq z,\\
0,&\quad \text{when}\,\zeta=z,
\end{cases}
\]
is a uniformly continuous function on the compact set $G_\delta\times\overline D$.  
Writing $\zeta$ and $z$ in real coordinates as
$\zeta=(u_1,u_2\cdots,u_{2n})$ and $z=(x_1,x_2,\cdots,x_{2n})$, the real Hessian of $\rho_D$ at $\zeta$ is related to $Q_\zeta,R_\zeta$ in the following way:
\beas
\frac{1}{2}\sum_{j,k=1}^{2n}\frac{\partial^2 \rho_D}{\partial u_j \partial u_k}(\zeta) (u_j-x_j)(u_k-x_k)=\rea Q_\zeta(\rho_D,(\zeta-z))+R_\zeta(\rho_D,(\zeta-z)).
\eeas
Furthermore, by Lemma~\ref{le:str cvx of Minkwski}, as $\rho_D$ is a strongly convex function on $\C^n\setminus\{0\}$, we get that
\[
C_1\|\zeta-z\|^2\leq\rea Q_\zeta(\rho_D,(\zeta-z))+R_\zeta(\rho_D,(\zeta-z))\leq C_2\|\zeta-z\|^2,\quad 
(\zeta,z)\in G_\delta\times\overline D,
\]
where $C_1,C_2$ are positive constants that are independent of both $\zeta$ and $z$. 
Combining this with the Taylor expansion of $\rho_D$ above, we get that
for a fixed $\zeta\in G_\delta$,
\[
C_1\|\zeta-z\|^2+ E(\zeta,z)\leq2\Rea\ltt\left\langle\partial \rho_D(\zeta),\zeta-z\right\rangle-\rho_D(\zeta)\rtt+\rho_D(\zeta)+\rho_D(z)\leq C_2\|\zeta-z\|^2+ E(\zeta,z),\quad z\in \overline{D}.
\] 
Now, due to the uniform continuity of $\widetilde E$, there exists a $\epsilon>0$, 
such that for any $\ltt\zeta,z\rtt\in G_\delta\times\overline{D}$ satisfying $\|\zeta-z\|<\eps$,
\[
\widetilde{C_1}\leq\frac{C_1\|\zeta-z\|^2+E\ltt\zeta,z\rtt}{\|\zeta-z\|^2}\leq C_2+\widetilde{E}\ltt\zeta,z\rtt\leq \widetilde{C_2},
\]
where $\widetilde{C_1}, \widetilde{C_2}$ are constants that do not depend on $\zeta,z$.
As a result, we have that
\be\label{eq:est}
2\Rea\ltt\left\langle\partial \rho_D(\zeta),\zeta-z\right\rangle-\rho_D(\zeta)\rtt\approx-\rho_D(\zeta)-\rho_D(z)+\|\zeta-z\|^2, \quad (\zeta,z)\in G_{\eps,\frac{\delta}{2}},
\ee
where 
$G_{\eps,\frac{\delta}{2}}=\{\ltt\zeta,z\rtt\in\overline D\times\overline D; -\rho_D(\zeta)<\delta/2,\|\zeta-z\|<\eps\}$. Note that both the right-hand side and left-hand side in \eqref{eq:est} vanish if and only if $\zeta$ and $z$ are in $bD$, and $\zeta=z$. Hence, by the compactness of $\overline D\times\overline D\setminus G_{\eps,\frac{\delta}{2}}$, it follows that 
\bea\label{eq:real part est}
2\Rea\ltt\left\langle\partial \rho_D(\zeta),\zeta-z\right\rangle-\rho_D(\zeta)\rtt\approx-\rho_D(\zeta)-\rho_D(z)+\|\zeta-z\|^2,\quad\forall\ltt\zeta,z\rtt\in\overline D\times\overline D.
\eea
Thus,
\beas
\left|B_{D}\ltt\zeta,z\rtt\right|&\approx& \left|\Rea\ltt\left\langle\partial \rho_D(\zeta),\zeta-z\right\rangle-\rho_D(\zeta)\rtt\right|+\left|\operatorname{Im}\left\langle\partial \rho_D(\zeta),\zeta-z\right\rangle\right|,\\
 &\approx&
 \left|\rho_D(\zeta)\right|+\left|\rho_D(z)\right|+\left|\operatorname{Im}\left\langle\partial \rho_D(\zeta),\zeta-z\right\rangle\right|+\|\zeta-z\|^2,\quad \ltt\zeta,z\rtt\in \overline{D}\times\overline{D}.
\eeas
This proves \eqref{eq:key est LS ker}.
Now, using \eqref{eq:key est LS ker}, and following the  steps presented in \cite[Lemma 4.1]{LS12}, we conclude the $L^p$-boundedness of ${\mathcal B_D}$.
\end{proof}

%%%%%%%%%%%%%%%%%%%%%%%%%%%
%%%%%%%%%%%%%%%%%%%%%%%%%%%5
%%%%%Relationship between the two
%%%%%%%%%%%%%%%%%%%%%%%%%%%%%%
%%%%%%%%%%%%%%%%%%%%%%%%%%%%%%%
Next, we consider a kernel that appears when one pushes forward the Fantappi{\`e} transform on $D^*$ under the diffeomorphism $T_{D^*}$ considered in Lemma~\ref{le:cov dom}. Let
\bea
    K_{D}\ltt\zeta,z\rtt&=& 
        1-\langle T_D(\zeta),z\rangle,\quad (\zeta,z)\in\overline D\times\overline D 
            \label{eq:def of FLD kera}\\
        &=&
        \begin{cases}        \dfrac{\left\langle \partial m_{D}(\zeta),\zeta-m_{D}^2(\zeta)z\right\rangle}{\left\langle \partial m_{D}(\zeta),\zeta\right\rangle},&\quad\ltt\zeta,z\rtt\in\ltt\overline D\setminus\{0\}\rtt\times\overline D,\\
    1,&\quad \ltt\zeta,z\rtt\in\{0\}\times\overline D,         \end{cases} \label{eq:def of FLS ker},\\
    \mathcal K_D(f)(z)&=&\int_{D} f(\zeta) \left(K_D\ltt\zeta,z\rtt\right)^{-n-1} dV(\zeta),\quad f\in L^p(D), z\in D.
    \label{eq:def of FLS SIO}
\eea

\begin{lemma}\label{le:comp of two ker}
    Let $D\subset\Cn$ be a bounded strongly convex domain containing the origin. Then, 
    \bea\label{eq:comp of}
    \left|B_D\ltt\zeta,z\rtt\right|\leq C \left|K_D\ltt\zeta,z\rtt\right|,\quad\forall\ltt\zeta,z\rtt\in\overline D\times\overline D.
    \eea
where $C>0$ are constants not depending on $\zeta,z$.
\end{lemma}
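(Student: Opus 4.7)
The plan is to produce an explicit algebraic identity relating $B_D$ and $K_D$, and then absorb the resulting error term using a uniform lower bound on $|K_D|$.

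When $\zeta = 0$ we have $B_D(0,z) = 1 = K_D(0,z)$, so the estimate is trivial. For $\zeta \neq 0$, I will write $\zeta = s\eta$ with $s = m_D(\zeta) \in (0,1]$ and $\eta \in bD$, and set $A = \langle \partial m_D(\eta), \eta\rangle$, which is legitimate since $\partial m_D$ is $0$-homogeneous. Euler's identity for the positively $1$-homogeneous function $m_D$ gives $\Rea(2A) = 1$, so both $|2A|$ and $|1 - 2A| = 2|\Ima A|$ are bounded uniformly on the compact set $bD$. A direct expansion of \eqref{eq:def of LS ker} and \eqref{eq:def of FLS ker} in these coordinates, with $s\langle \partial m_D(\eta), z\rangle$ eliminated via the expression for $K_D$, should yield the identity
\[
B_D(\zeta, z) \;=\; 2A\, K_D(\zeta, z) \;+\; \ltt 1 - m_D^2(\zeta)\rtt (1 - 2A).
\]

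The remaining task is to bound $|K_D(\zeta,z)|$ from below in terms of $1 - m_D(\zeta)$. Rewriting $K_D(\zeta, z) = \langle \partial m_D(\eta), \eta - sz\rangle / A$, one computes
\[
\Rea\langle \partial m_D(\eta), \eta - sz\rangle \;=\; \tfrac{1}{2}\bigl(1 - s\,\Rea\langle 2\partial m_D(\eta), z\rangle\bigr).
\]
The polar-set containment $2\partial m_D(\eta) \in \overline{D^\circ}$, recalled in the proof of Lemma~\ref{le:bdd of LS ker}, forces $\Rea\langle 2\partial m_D(\eta), z\rangle \leq 1$ for every $z \in \overline{D}$, and hence $\Rea\langle \partial m_D(\eta), \eta - sz\rangle \geq (1 - s)/2 \geq 0$. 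Combining with the uniform upper bound $|A| \leq M := \max_{\eta \in bD}|A|$, I get $|K_D(\zeta, z)| \geq (1 - m_D(\zeta))/(2M)$ for all $\ltt\zeta, z\rtt \in \ltt\overline{D}\setminus\{0\}\rtt \times \overline{D}$.

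To finish, I combine the identity with
$|1 - m_D^2(\zeta)| = (1 - m_D(\zeta))(1 + m_D(\zeta)) \leq 2(1 - m_D(\zeta)) \leq 4M|K_D(\zeta, z)|$
to obtain
\[
|B_D(\zeta, z)| \;\leq\; |2A|\cdot|K_D(\zeta, z)| \;+\; |1 - m_D^2(\zeta)|\cdot|1 - 2A| \;\lesssim\; |K_D(\zeta, z)|.
\]
The only conceptually delicate step is spotting the identity so that the error term $(1 - m_D^2)(1 - 2A)$, which vanishes precisely on $bD$, is controlled by the lower bound on $|K_D|$ coming from the polar-set containment of $2\partial m_D$; everything else is then a bounded-factor bookkeeping exercise.
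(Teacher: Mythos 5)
Your proof is correct, and the identity
\[
B_D(\zeta, z) \;=\; 2A\, K_D(\zeta, z) \;+\; \ltt 1 - m_D^2(\zeta)\rtt (1 - 2A), \qquad A = \left\langle \partial m_D(\eta), \eta\right\rangle,
\]
does check out by a direct substitution $\zeta = s\eta$, $\partial m_D(s\eta) = \partial m_D(\eta)$, and elimination of $s\left\langle \partial m_D(\eta), z\right\rangle$ using the formula for $K_D$. In fact this is exactly the paper's own splitting in disguise: the paper writes $B_D = \widetilde K_D + \rho_D\ltt \tfrac{\left\langle\partial\rho_D(\zeta),\zeta\right\rangle}{1+\rho_D(\zeta)} - 1\rtt$, and since $\widetilde K_D = 2A\,K_D$, $\tfrac{\left\langle\partial\rho_D(\zeta),\zeta\right\rangle}{1+\rho_D(\zeta)} = 2A$ and $\rho_D = m_D^2 - 1$, the two decompositions coincide. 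Where you genuinely diverge from the paper is in the lower bound on the kernel. The paper lower-bounds $|\widetilde K_D| \geq |\Rea\widetilde K_D|$ and then invokes the two-sided Taylor-expansion estimate \eqref{eq:real part est}, which is the heavy technical input of Section~\ref{S:SIOs} and relies on the strong convexity of $\rho_D$ (Lemma~\ref{le:str cvx of Minkwski}). You instead get $|K_D(\zeta,z)| \geq \ltt1-m_D(\zeta)\rtt/(2M)$ directly from the soft fact $2\partial m_D(\eta)\in bD^\circ$ and the resulting supporting-hyperplane inequality $\Rea\left\langle 2\partial m_D(\eta), z\right\rangle \leq 1$ for $z\in\overline D$, together with $\Rea A = 1/2$. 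This is a real simplification: your argument for this particular lemma never needs strong convexity or the estimate \eqref{eq:real part est} — plain convexity, $\cont^1$-smoothness of $bD$, and $0\in D$ (which already forces non-radial tangents) suffice. The trade-off is that the paper's route is somewhat self-contained within the $\rho_D$-framework it has already set up for \eqref{eq:key est LS ker}, whereas yours imports the polar-set containment as a separate ingredient; on balance yours is cleaner and strictly more elementary for this lemma.
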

\begin{proof} Writing $K_D\ltt\zeta,z\rtt$ and $B_D(\zeta, z)$, with respect to $\rho_D$, we get that
\beas
K_D\ltt\zeta,z\rtt&=&\frac{1+\rho_D(\zeta)}{\left\langle\partial\rho_D(\zeta),\zeta\right\rangle}\ltt\left\langle\partial \rho_D(\zeta),\zeta-z\right\rangle-\frac{\rho_D(\zeta)}{1+\rho_D(\zeta)}\left\langle\partial\rho_D(\zeta),\zeta\right\rangle\rtt,\\
B_D\ltt\zeta,z\rtt&=&\left\langle\partial \rho_D(\zeta),\zeta-z\right\rangle-\rho_D\ltt\zeta\rtt.
\eeas
We write $\zeta\in \overline{D}$ as $\zeta=r\xi$, where $r\in[0,1]$ and $\xi\in bD$. As $m_D$ is positively $1$-homogeneous, $\partial m_D$ is positively $0$-homogeneous. Thus,
\be\label{eq:rho m}
\frac{1+\rho_D(\zeta)}{\left\langle\partial\rho_D(\zeta),\zeta\right\rangle}=\frac{m_D(\zeta)}{\left\langle2\partial m_D(\zeta),\zeta\right\rangle}=\frac{1}{\left\langle2\partial m_D(\xi), \xi\right\rangle}.
\ee
Now, from \eqref{eq:euler}, we have that $\rea\left\langle2\partial m_D(\xi),\xi\right\rangle=1,\,\forall\xi\in bD$. Consequently,  $\left|\left\langle2\partial m_D(\xi),\xi\right\rangle\right|\approx1$ on $bD$, which in turn implies that  $\left|\dfrac{1+\rho_D(\zeta)}{\left\langle\partial\rho_D(\zeta),\zeta\right\rangle}\right|\approx 1$ on $\overline D$. Thus, if we set
\be\label{eq:tildeK}
\widetilde K_D\ltt\zeta,z\rtt=K_D\ltt\zeta,z\rtt\frac{\left\langle\partial\rho_D(\zeta),\zeta\right\rangle}{1+\rho_D(\zeta)},\quad(\zeta,z)\in\overline D\times\overline D,
\ee
we have that
\bea\label{eq:equiv est}
\left|{K_D}\ltt\zeta,z\rtt\right|\approx \left|\widetilde K_D\ltt\zeta,z\rtt\right|,\quad\ltt\zeta,z\rtt\in \overline D\times\overline D.
\eea
%Further, observe that, due to \eqref{eq:euler},
%\bea\label{eq:real equal}
%\rea\ltt \widetilde K_D\ltt\zeta,z\rtt\rtt=\rea\ltt B_D\ltt\zeta,z\rtt\rtt,\quad \forall\ltt\zeta,z\rtt\in\overline D\times\overline D.
%\eea
Now, observe that
\beas
\left|\frac{{B_D\ltt\zeta,z\rtt}}{\widetilde K_D\ltt\zeta,z\rtt}\right|&=&\left|\frac{\left\langle\partial \rho_D(\zeta),\zeta-z\right\rangle-\rho_D(\zeta)}{\left\langle\partial \rho_D(\zeta),\zeta-z\right\rangle-\dfrac{\rho_D(\zeta)}{1+\rho_D(\zeta)}\left\langle\partial\rho_D(\zeta),\zeta\right\rangle}\right|\\
&=&\left|\frac{\left\langle\partial \rho_D(\zeta),\zeta-z\right\rangle-\dfrac{\rho_D(\zeta)}{1+\rho_D(\zeta)}\left\langle\partial\rho_D(\zeta),\zeta\right\rangle+\rho_D(\zeta)\ltt\dfrac{\left\langle\partial\rho_D(\zeta),\zeta\right\rangle}{1+\rho_D(\zeta)}-1\rtt}{\left\langle\partial \rho_D(\zeta),\zeta-z\right\rangle-\dfrac{\rho_D(\zeta)}{1+\rho_D(\zeta)}\left\langle\partial\rho_D(\zeta),\zeta\right\rangle}\right|\\
&\leq&1+\frac{\left|\rho_D(\zeta)\right|\left|\dfrac{\left\langle\partial\rho_D(\zeta),\zeta\right\rangle}{1+\rho_D(\zeta)}-1\right|}{\left|\left\langle\partial \rho_D(\zeta),\zeta-z\right\rangle-\dfrac{\rho_D(\zeta)}{1+\rho_D(\zeta)}\left\langle\partial\rho_D(\zeta),\zeta\right\rangle\right|}\\
&\lesssim& 1+\frac{\left|\rho_D(\zeta)\right|}{\left|\left\langle\partial \rho_D(\zeta),\zeta-z\right\rangle-\dfrac{\rho_D(\zeta)}{1+\rho_D(\zeta)}\left\langle\partial\rho_D(\zeta),\zeta\right\rangle\right|}\\
&\leq&1+\frac{\left|\rho_D(\zeta)\right|}{\left|\Rea\ltt\left\langle\partial \rho_D(\zeta),\zeta-z\right\rangle-\dfrac{\rho_D(\zeta)}{1+\rho_D(\zeta)}\left\langle\partial\rho_D(\zeta),\zeta\right\rangle\rtt\right|}\\
(\text{by \eqref{eq:rho m}})
&=&
1+\frac{\left|\rho_D(\zeta)\right|}{\left|\Rea\ltt\left\langle\partial \rho_D(\zeta),\zeta-z\right\rangle-\rho_D(\zeta)\rtt\right|}\\
(\text{by \eqref{eq:real part est}})
&\leq& 1+\frac{\left|\rho_D(\zeta)\right|}{C_1\ltt\left|\rho_D(\zeta)\right|+\left|\rho_D(z)\right|+\|\zeta-z\|^2\rtt}\\
&\leq& 1+\frac{1}{C_1}.
\eeas
Finally, combining this with \eqref{eq:equiv est}, we conclude the lemma.
\end{proof}

    %%%%%%%%%%%%%%%%%%%%%%%%%%%%%%55
    %%%%%%%%%%%Boundedness of the projection operator
    %%%%%%%%%%%%%%%%%%%%

%%%%%%%%%%%%%%%%%%%%%%%%%%%%%%%5
%%%%%%%%%%%%%%%%%%%%%%%%%%%%%%%%%

   We now discuss a reproducing kernel on $p$-Bergman spaces of strongly convex domains. Define a $(1,0)$-form as follows. For $\ltt\zeta,z\rtt\in\overline{D}\times D$, consider
    \be\label{eq:repr ker}
G\ltt\zeta,z\rtt=\dfrac{\partial\rho_D(\zeta)}{\widetilde{K}_D\ltt\zeta,z\rtt},
    \ee
    where $\widetilde K_D$ is as in \eqref{eq:tildeK}. Since $\rho_D$ is $\cont^2$ smooth on $\C^n\setminus\{0\}$, it follows that for each $z\in D$, $G$ is a $C^1$-smooth form on $\overline  D\setminus\{0\}$. We show that the $(n,n)$-form $\ltt\overline{\partial}_{\zeta}G\rtt^{n}(\zeta,z)$ reproduces functions in $A^p( D)$. For this, we exploit the reproducing property of the Cauchy--Leray operator for functions in the dense subclass $\hol\ltt D\rtt\cap C^1\ltt\overline D\rtt$.
    
    \begin{lemma}\label{le:repr}
   Let $D\subset\Cn$ be a bounded strongly convex domain. Given $p\in(1,\infty)$, let $f\in A^p\ltt D\rtt$. Then, for each $z\in D$, 
    \be\label{eq:repr prop}
    f(z)=\frac{1}{(2\pi i)^n}\int_D f(\zeta)\ltt\overline{\partial}G\rtt^n\ltt\zeta,z\rtt.
    \ee
    \end{lemma}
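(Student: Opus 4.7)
The strategy is two-fold: first prove the identity for $f\in\hol(D)\cap C^1(\overline D)$ by combining the Cauchy--Fantappi\`e--Leray (CFL) formula with Stokes' theorem, and then extend to all of $A^p(D)$ via a standard dilation-and-density argument.

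\emph{Smooth case.} The form $G$ is a Leray section with respect to $D$: on $bD$, $\rho_D=0$ gives $\widetilde K_D(\zeta,z)=\langle\partial\rho_D(\zeta),\zeta-z\rangle$, so $\langle G(\zeta,z),\zeta-z\rangle=1$. Moreover, the rewriting $G=\sum_j T_D^j(\zeta)\,d\zeta_j/K_D(\zeta,z)$ afforded by Lemma~\ref{le:cov} shows that $G$ extends continuously to $\overline D\times\{z\}$ with $G(0,z)=0$, and that $\widetilde K_D(\cdot,z)$ is nowhere zero on $\overline D$ for each $z\in D$ (this uses $T_D(\overline D)=\overline{D^*}$ together with the definition of $D^*$). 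The CFL formula for the Leray section $G$ therefore yields
\[
(2\pi i)^n f(z)=\int_{bD}f(\zeta)\,G\wedge(\overline\partial_\zeta G)^{n-1}.
\]
Since $G\wedge(\overline\partial G)^{n-1}$ has bidegree $(n,n-1)$, its $\partial$-part is a $(n+1,n-1)$-form and hence vanishes; combined with $\overline\partial(G\wedge(\overline\partial G)^{n-1})=(\overline\partial G)^n$ (by $\overline\partial^2=0$) and $\overline\partial f=0$, one obtains $d(fG\wedge(\overline\partial G)^{n-1})=f(\overline\partial G)^n$. Applying Stokes' theorem on $D\setminus B_\eps(0)$ and letting $\eps\to 0$ (the boundary contribution at $\{|\zeta|=\eps\}$ vanishes because $G(\zeta,z)\to 0$ as $\zeta\to 0$) converts the boundary integral into the desired volume integral.

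\emph{Density argument.} For general $f\in A^p(D)$, star-convexity of $D$ with respect to the origin lets us define $f_t(\zeta):=f(t\zeta)$ for $t\in(0,1)$; each $f_t$ lies in $\hol(\overline D)\cap A^p(D)$, and $f_t\to f$ in $L^p(D)$ as $t\nearrow 1$ by the continuity of translations in $L^p$. The smooth case applies to each $f_t$. The left-hand side $f_t(z)\to f(z)$ pointwise for each $z\in D$. For the right-hand side, the kernel $(\overline\partial G)^n(\cdot,z)$ is bounded on $\overline D$ for fixed $z\in D$ (since $|\widetilde K_D(\cdot,z)|$ is bounded below on $\overline D$), so H\"older's inequality gives
\[
\Bigl|\int_D(f_t-f)(\overline\partial G)^n(\cdot,z)\Bigr|\leq \|f_t-f\|_{L^p(D)}\|(\overline\partial G)^n(\cdot,z)\|_{L^q(D)}\longrightarrow 0,
\]
where $q$ is the conjugate exponent to $p$. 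Passing to the limit in the identity for $f_t$ finishes the proof.

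\emph{Principal difficulty.} The main technical point is controlling the Leray section $G$ globally on $\overline D$: verifying the non-vanishing of $\widetilde K_D(\cdot,z)$ there (an instance of biduality for strongly $\C$-convex domains) and the mild behaviour of $G$ at the potentially non-smooth point $\zeta=0$, where $\rho_D$ may fail to be $C^2$. Both are resolved by the $T_D/K_D$ description of $G$ from Lemma~\ref{le:cov}, after which the CFL-plus-Stokes reduction and the approximation step are routine.
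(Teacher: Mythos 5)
Your proof is correct and takes essentially the same route as the paper: reproduce from the boundary via a Cauchy--Fantappi\`e--Leray formula, convert to a solid integral via Stokes' theorem on a shell avoiding the origin (where $(\overline\partial G)^n$ is only continuous, not $C^1$), then pass to general $A^p$ by density. Two of your choices differ from the paper's in details worth recording.

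First, you apply the CFL formula to $G$ directly, observing that on $bD$ it coincides with the standard convex Leray section $L=\partial\rho_D/\langle\partial\rho_D,\zeta-z\rangle$; the paper quotes the CFL formula for $L$ and then uses Range's formula to identify $j^*(L\wedge(\overline\partial L)^{n-1})$ with $j^*(G\wedge(\overline\partial G)^{n-1})$. These are equivalent, though the paper's phrasing sidesteps having to state explicitly that $G$ itself is a Leray section. Your excision of a Euclidean ball $B_\varepsilon(0)$ rather than $\lambda D$ also works; the inner boundary term vanishes not merely because $G\to 0$ at the origin, but because $G=O(|\zeta|)$ while $\overline\partial G=O(1)$ there (via the $T_D/K_D$ rewriting and $1$-homogeneity of $T_D$), so the $(n,n-1)$-form is $O(|\zeta|)$ on a sphere of area $O(\varepsilon^{2n-1})$; you should say this one sentence more to close the estimate.

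Second, and more substantively, your density step is cleaner than the paper's for this lemma. The paper extends from $\hol(D)\cap C^1(\overline D)$ to $A^p(D)$ by proving that $f\mapsto\int_D f\,(\overline\partial G)^n(\cdot,z)$ is bounded on $L^p(D)$ as an operator, which it obtains from the kernel comparisons $|B_D|\lesssim|K_D|$ (Lemma~\ref{le:comp of two ker}) and the $L^p$-boundedness of $|\mathcal B_D|$ (Lemma~\ref{le:bdd of LS ker}). You instead fix $z$, note that $|\widetilde K_D(\cdot,z)|$ is bounded below on $\overline D\setminus\{0\}$ and that $\mathfrak h_D$ is bounded, so the kernel $(\overline\partial G)^n(\cdot,z)$ is bounded, and conclude by H\"older together with $\|f_t-f\|_{L^p}\to 0$. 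This avoids the singular-integral machinery entirely and is the minimal argument needed for pointwise reproduction; the paper proves the stronger operator bound because it is reused in the proof of Theorem~\ref{th:main1}. Two minor imprecisions: what you invoke is continuity of dilations (not translations) in $L^p$ for a star-convex domain containing the origin, and $\widetilde K_D(\cdot,z)$ is not actually defined at $\zeta=0$ (the factor $1+\rho_D$ vanishes there); the correct statement is that $|\widetilde K_D(\cdot,z)|$ is bounded away from zero on $\overline D\setminus\{0\}$, which follows from $K_D(\cdot,z)$ being nonvanishing there (by biduality, $T_D(\zeta)\in\overline{D^*}$ and $z\in D$ force $\langle T_D(\zeta),z\rangle\neq 1$) and $|\langle\partial\rho_D,\zeta\rangle/(1+\rho_D)|\approx 1$.
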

    \begin{proof}
      Consider the modified $(1,0)$-form 
        \beas
L\ltt\zeta,z\rtt=\dfrac{\partial\rho_D(\zeta)}{\widetilde{K}_D\ltt\zeta,z\rtt+\frac{\rho_D(\zeta)}{1+\rho_D(\zeta)}\left\langle\partial\rho_D(\zeta),\zeta\right\rangle}=\dfrac{\partial\rho_D(\zeta)}{\left\langle\partial\rho_D(\zeta),\zeta-z\right\rangle},\quad \ltt\zeta,z\rtt\in\overline{D}\times D.
        \eeas
This form is a generating form for a convex domain with $\cont^2$-smooth boundary; see\cite[\S 3]{Ra98}. Thus the $(n,n-1)$-form, $L\wedge\ltt\overline\partial L\rtt^{n-1}$ is a Cauchy--Fanatppi{\`e} form on $bD$, and reproduces functions in $\hol\ltt D\rtt\cap\cont\ltt\overline D\rtt$ from its boundary values. As a consequence, we get that for any $f\in\hol\ltt D\rtt\cap\cont^1\ltt\overline{D}\rtt$, 
\begin{align}\label{eq:repr larey}\nonumber
f(z)&=\frac{1}{(2\pi i)^n}\int_{bD} f(\zeta)j^*\ltt L\wedge\ltt\overline\partial L\rtt^{n-1}\rtt(\zeta,z)\\
&=\frac{1}{(2\pi i)^n}\int_{bD} \dfrac{f(\zeta)}{\left\langle\partial\rho_D(\zeta),\zeta-z\right\rangle^n} j^*\ltt\partial \rho_D\wedge\ltt\overline{\partial}\partial\rho_D\rtt^{n-1}\rtt(\zeta), \quad z\in D,
\end{align}
where $j:bD\rightarrow\C^n$ is the inclusion map. It is easy to verify that
\[
j^*\ltt L\wedge\ltt\overline\partial L\rtt^{n-1}\rtt(\zeta,z)=j^*\ltt G\wedge\ltt\overline\partial G\rtt^{n-1}\rtt(\zeta,z),\quad \ltt\zeta,z\rtt\in bD\times D.
\]
Hence by \eqref{eq:repr larey},
\be\label{eq:repr bdry our}
f(z)=\frac{1}{(2\pi i)^n}\int_{bD} f(\zeta)j^*\ltt G\wedge\ltt\overline\partial G\rtt^{n-1}\rtt(\zeta,z),\quad z\in D.
\ee
 Now, to conclude \eqref{eq:repr prop} for $f$, we wish to apply Stokes' theorem to the term on the right-hand side of the above equality. However, since the form $\ltt \overline \partial G\rtt^{n}$ is not $\cont^1$-smooth at the origin, we cannot apply Stokes' theorem directly. To get around this issue, let us first fix a $z\in D$. Consider $\lambda\in(0,1)$, such that $z\in  D\setminus \overline{\lambda D}$, where $\lambda D=\{\lambda\zeta:\zeta\in D\}$. Applying Stokes' theorem on the domain $ D\setminus \overline{\lambda D}$, we obtain that
\be\label{eq:int decomp}
\int_{ D\setminus \overline{\lambda D}}f(\zeta) \ltt \overline \partial G\rtt^n\ltt\zeta,z\rtt=\int_{b D} f(\zeta)j^*\ltt G\wedge\ltt\overline\partial G\rtt^{n-1}\rtt(\zeta,z)\minus\int_{\lambda b D}  f(\zeta)j^*\ltt G\wedge\ltt\overline\partial G\rtt^{n-1}\rtt(\zeta,z).
\ee
By a straightforward  computation, we can verify that 
 \bea\label{eq:comp vol}
 \ltt\overline\partial G\rtt^n(\zeta,z)
&=& 
-\frac{\ltt\ltt\overline\partial\partial\rho_D\rtt^{n-1}\wedge\overline{\partial}\widetilde K_ D\wedge\partial\rho_D+\widetilde K_ D\ltt\overline{\partial}\partial\rho_D\rtt^n\rtt}{\widetilde K_ D\ltt\zeta,z\rtt^{n+1}}\nonumber\\
%\\&=&-\frac{\ltt\ltt\overline\partial\partial\rho_D\rtt^{n-1}\wedge\overline{\partial}\ltt\frac{\rho_D(\zeta)\left\langle\partial\rho_D(\zeta),\zeta\right\rangle}{1+\rho_D(\zeta)}\rtt\wedge\partial\rho_D+\frac{\rho_D(\zeta)\left\langle\partial\rho_D(\zeta),\zeta\right\rangle}{1+\rho_D(\zeta)}\ltt\overline{\partial}\partial\rho_D\rtt^n\rtt}{\widetilde K_ D\ltt\zeta,z\rtt^{n+1}}\nonumber\\
&=&\dfrac{\mathfrak h_ D\ltt\zeta\rtt}{\widetilde K_ D\ltt\zeta,z\rtt^{n+1}}dV(\zeta),\quad \ltt\zeta,z\rtt\in\ltt D\setminus\{0\}\rtt\times D,
 \eea
 where
\bes
\renewcommand\arraystretch{2.5}
\mathfrak h_ D(\zeta)= c_n\det
\begin{pmatrix}
\dfrac{\rho_D(\zeta){\left\langle\partial\rho_D(\zeta),\zeta\right\rangle}}{1+\rho_D(\zeta)} & \dfrac{\partial\rho_D}{\partial \zeta_j}\\
    \dfrac{\partial}{\partial \bar \zeta_j}\ltt\dfrac{\rho_D(\zeta)\left\langle\partial\rho_D(\zeta),\zeta\right\rangle}{1+\rho_D(\zeta)}\rtt & 
    \dfrac{\partial^2\rho_D}{\partial \zeta_j\partial \bar \zeta_k}
\end{pmatrix}_{1\leq j,k\leq n}, \quad \zeta\in  D.
\ees
Similarly, using \cite[Lemma~3.4.]{Ra98}, we have that
\begin{align}\label{eq:leray surf area}
j^*\ltt G\wedge\ltt\overline\partial G\rtt^{n-1}\rtt(\zeta,z)
=\frac{j^*\ltt\partial \rho_D\wedge\ltt\overline\partial\partial\rho_D\rtt^{n-1}\rtt}{\widetilde{K}_ D\ltt\zeta,z\rtt^{n}}\nonumber=\dfrac{\mathfrak h_{bD}\ltt\zeta\rtt}{\widetilde{K}_ D\ltt\zeta,z\rtt^{n}}\sigma_ D,
\end{align}
where $\sigma_ D$ is the surface area measure on $b D$, and
\[
\mathfrak{h}_{b D}\ltt\zeta\rtt=\frac{(-1)^n}{\pi^n}\det\begin{pmatrix}
    0 & \dfrac{\partial\rho_D}{\partial \zeta_j}\\
    \dfrac{\partial\rho_D}{\partial \bar \zeta_j} & 
    \dfrac{\partial^2\rho_D}{\partial \zeta_j\partial \bar \zeta_k}
\end{pmatrix}_{1\leq j,k\leq n}, \quad \zeta\in bD.
\]
Since $D$ is strongly convex, $\mathfrak{h}_{b D}$ is bounded above and below by positive constants.
Furthermore, since $m_D$ is $1$-homogeneous, the second order partial derivatives of $\rho_D$ are $0$-homogeneous, which implies that $\sup_{\zeta\in\overline D} \frac{\partial^2\rho_D}{\partial \zeta_j\partial \bar \zeta_k}(\zeta)=\sup_{\zeta\in b D}\frac{\partial^2\rho_D}{\partial \zeta_j\partial \bar \zeta_k}(\zeta)$. Consequently, as $\rho_D$ is $\cont^2$-smooth on $bD$, it follows that $\frac{\partial^2\rho_D}{\partial \zeta_j\partial \bar \zeta_k}$ is bounded above on $\overline  D$. Hence 
\be\label{eq:density bdd abv}
\left|\mathfrak h_ D(\zeta)\right|\leq C,\quad \zeta\in\overline  D,
\ee
where $C$ is a positive constant.
As mentioned earlier, since $z\in D$ is fixed $\widetilde K_ D\ltt\zeta,z\rtt\neq 0,\, \forall\zeta\in\overline D$, and hence by continuity $\left|\frac{1}{\widetilde K_ D\ltt\zeta,z\rtt}\right|\leq C_z$, where $C_z$ is a constant depending on $z$. Thus combining this with \eqref{eq:comp vol}, \eqref{eq:density bdd abv}, and the fact $f\in \cont\ltt\overline D\rtt$, we get that
\beas
\int_{ D}\left|f(\zeta)\right| \left|\ltt \overline \partial G\rtt^n\right|\ltt\zeta,z\rtt\lesssim \text{vol}(D).
\eeas
Thus, applying the dominated convergence theorem, we get that, as $\lambda\rightarrow 0$, 
\[
\int_{ D\setminus \overline{\lambda D}}f(\zeta) \ltt \overline \partial G\rtt^n\ltt\zeta,z\rtt\rightarrow \int_{ D}f(\zeta) \ltt \overline \partial G\rtt^n\ltt\zeta,z\rtt.
\]
Applying the change of variable $\zeta=\lambda\zeta'$, where $\zeta'\in b D$, we get that
\beas
\left|\int_{\lambda b D}  f(\zeta)j^*\ltt G\wedge\ltt\overline\partial G\rtt^{n-1}\rtt(\zeta,z)\right|&\lesssim&\int _{\lambda b D}j^*\ltt\partial \rho_D\wedge\ltt\overline\partial\partial\rho_D\rtt^{n-1}\rtt(\zeta),\\
&\lesssim& \lambda^{2n}\int_{b D} j^*\ltt\partial \rho_D\wedge\ltt\overline\partial\partial\rho_D\rtt^{n-1}\rtt(\zeta'),\\
&\lesssim& \lambda^{2n} \sigma\ltt b D\rtt.
\eeas
The right-hand side tends to $0$, as $\lambda$ tends to $0$. Hence, $\lim_{\lambda\rightarrow 0}\int_{\lambda b D}  f(\zeta)j^*\ltt G\wedge\ltt\overline\partial G\rtt^{n-1}\rtt(\zeta,z)\rightarrow 0$. Combining these observations with \eqref{eq:int decomp} and \eqref{eq:repr bdry our}, we get that
 \[
 f(z)=\frac{1}{(2\pi i)^n}\int_ D f(\zeta)\ltt\overline{\partial}G\rtt^n\ltt\zeta,z\rtt,\quad z\in D.
 \]

Since $ D$ is a convex domain, the class $\hol\ltt D\rtt\cap\cont^1\ltt\overline D\rtt$ is dense in $A^p\ltt D\rtt$. Hence to prove \eqref{eq:repr prop} for $A^p( D)$ functions, where $p\in\ltt1,\infty\rtt$, it is enough to show that for every $p\in\ltt1,\infty\rtt$, the integral operator
\[
L^p( D)\ni f\mapsto\frac{1}{(2\pi i)^n}\int_ D f\ltt\zeta\rtt \ltt \overline \partial G\rtt^n\ltt\zeta,z\rtt,
\]
is a bounded operator on $L^p\ltt D\rtt$. To see this, for $f\in L^p\ltt D\rtt$,
\beas
\left\|\frac{1}{(2\pi i)^n}\int_ D f\ltt\zeta\rtt \ltt \overline \partial G\rtt^n\ltt\zeta,\cdot\rtt\right\|_{L^p(D)}^p&=&\left\|\frac{1}{(2\pi i)^n}\int_ D f\ltt\zeta\rtt \frac{\mathfrak h_D(\zeta)}{\widetilde K_D(\zeta,\cdot)^{n+1}}dV(\zeta)\right\|_{L^p(D)}^p\\
(\text{by}\,\eqref{eq:comp vol},\eqref{eq:density bdd abv})&\lesssim&\left\|\int_ D\left|f\ltt\zeta\rtt\right|\left|K_ D\ltt\zeta,\cdot\rtt\right|^{-n-1} dV(\zeta)\right\|_{L^p(D)}^p\\
(\text{by Lemma~\ref{le:comp of two ker}})&\lesssim&\left\|\int_ D\left|f\ltt\zeta\rtt\right|\left|B_ D\ltt\zeta,\cdot\rtt\right|^{-n-1} dV(\zeta)\right\|_{L^p(D)}^p\\
(\text{by Lemma~\ref{le:bdd of LS ker}})&\lesssim& \|f\|_{L^p(D)}^p.
\eeas
This proves our claim, and hence completes the proof of the lemma.
\end{proof}

Next, we note the following duality result on strongly convex domains, which is implicitly present in the literature. In fact, the result holds for all $\cont^2$-smooth strongly pseudoconvex domains. 
   
%Note that any domain $ D$ that satisfies the hypothesis of Theorem~\ref{th:main1}, does satisfy the hypothesis of the Proposition~\ref{le:cov dom}. This follows due to the fact that as $\Om^*$ is a strongly convex domain, consequently a convex domain, $\ltt\Om^*\rtt^*=\Om$, is a star convex domain with respect to 0. 

%%%%%%%%%%%%%%%%%%%%%%%%%%%%

\begin{lemma}\label{le:dual of Ap}
Let $D\subset\C^n$ be a bounded $\cont^2$-smooth strongly $\C$-convex domain. Given $p\in(1,\infty)$, let $q\in(1,\infty)$ be the conjugate exponent of $p$, i.e., $p^{-1}+q^{-1}=1$. Then, the map
$\Phi_p:A^q\ltt D\rtt\rightarrow A^p( D)'$, defined as
\[
\Phi_p(f)(h)=\int_{D} h(\zeta)\overline{f(\zeta)} dV(\zeta), \quad f\in A^q\ltt D\rtt,\, h\in A^p\ltt D\rtt,
\]
is an isomorphism.
\end{lemma}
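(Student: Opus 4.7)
The plan is to verify three properties of the linear map $\Phi_p$: boundedness, injectivity, and surjectivity. Boundedness, $\|\Phi_p(f)\|_{A^p(D)'} \le \|f\|_{L^q(D)}$, is immediate from H\"older's inequality; once surjectivity is established, the open mapping theorem gives the reverse inequality, making $\Phi_p$ a Banach space isomorphism.

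For injectivity, I would take $f \in A^q(D)$ with $\Phi_p(f) = 0$, so that $\int_D h\bar f\,dV = 0$ for every $h \in A^p(D)$, and test against Bergman kernel sections $h = K_B(\cdot,z)$, $z \in D$. On a $C^2$-smooth strongly convex (hence strongly pseudoconvex) domain, $K_B(\zeta,z)$ is bounded for $\zeta \in \overline D$ once $z$ is a fixed interior point (its only singularities lie on the boundary diagonal), so $K_B(\cdot,z) \in A^p(D)$ for every $p \in (1,\infty)$. Using the Hermitian symmetry $K_B(\zeta,z) = \overline{K_B(z,\zeta)}$, the hypothesis becomes
\[
0 = \int_D K_B(\zeta,z)\bar f(\zeta)\,dV(\zeta) = \overline{\int_D K_B(z,\zeta)f(\zeta)\,dV(\zeta)} = \overline{(Pf)(z)}, \quad z \in D,
\]
where $P$ is the Bergman projection. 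Since $f \in A^q(D)$ and $P$ reproduces $A^q(D)$-functions (which follows once $P\colon L^q(D) \to A^q(D)$ is bounded and the identity $Pf = f$ on $A^q \cap A^2$ is extended by density), I conclude $f = Pf \equiv 0$.

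For surjectivity, given $\Lambda \in A^p(D)'$, Hahn--Banach extends $\Lambda$ to $\tilde\Lambda \in L^p(D)^*$, and the standard $L^p$--$L^q$ duality produces $g \in L^q(D)$ with $\tilde\Lambda(h) = \int_D h\bar g\,dV$. I would set $f = Pg \in A^q(D)$. For any $h \in A^p(D)$, the Hermitian symmetry of $K_B$, Fubini, and $Ph = h$ on $A^p(D)$ should combine to give
\[
\Phi_p(f)(h) = \int_D h(\zeta)\overline{(Pg)(\zeta)}\,dV(\zeta) = \int_D \bar g(w)(Ph)(w)\,dV(w) = \int_D h\bar g\,dV = \Lambda(h),
\]
so $\Phi_p(f) = \Lambda$.

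The main nontrivial input in both steps is the $L^p$-boundedness of the Bergman projection $P\colon L^p(D) \to A^p(D)$ for $p \in (1,\infty)$ on a $C^2$-smooth strongly convex domain, which is the classical Phong--Stein theorem and could alternatively be recovered via the Kerzman--Stein-type estimates developed in Section~\ref{S:SIOs}, particularly Lemma~\ref{le:bdd of LS ker}. The remaining technical points --- the boundedness of $K_B(\cdot,z)$ on $\overline D$ for interior $z$, the extension of the Bergman reproducing formula from $A^2(D)$ to $A^q(D)$, and the Fubini interchange in the surjectivity calculation --- are routine and should be handled by first working on dense subclasses such as $A^q(D) \cap L^\infty(D)$ or polynomials (dense in $A^q(D)$ on bounded convex $D$), where all integral interchanges are transparent, and then passing to limits using continuity of $P$ together with the $L^p$--$L^q$-H\"older estimates.
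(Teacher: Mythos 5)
Your proof is correct and, in substance, follows the same route as the paper: the sole nontrivial input is the $L^p$-boundedness of the Bergman projection on $C^2$-smooth strongly convex domains, obtained in \cite{LS12}. The paper then outsources the remaining formal duality to \cite[Theorem~2.15 and Corollary~2.14]{CEM19} (in particular obtaining injectivity of $\Phi_p$ from surjectivity of $\Phi_q$ rather than by testing against Bergman-kernel sections), whereas you spell out the injectivity and surjectivity arguments directly via the reproducing property, self-adjointness of the projection, and Hahn--Banach together with $L^p$--$L^q$ duality; both are standard and equivalent. One small correction: the open mapping theorem yields the bounded inverse only after \emph{both} injectivity and surjectivity are in hand, not surjectivity alone, and for the test functions $K_B(\cdot,z)\in A^p(D)$ with $p>2$ you are implicitly invoking boundedness of $K_B(\cdot,z)$ up to $\overline D$ for fixed interior $z$ — true for $C^2$ strongly pseudoconvex domains, but worth either citing or replacing by the cleaner observation that $\int_D(Pg)\overline f\,dV=0$ for all $g\in L^p$ forces $Pf=0$ by self-adjointness.
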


\begin{proof}
{Fix $p\in(1,\infty)$.
According to Theorem~2.15 and Remark~2.6 in \cite{CEM19},  the surjectivity of $\Phi_p$ is guaranteed if the following two conditions are satisfied:
\begin{itemize}
    \item [(i)]For $p \in(1, \infty)$, the positive Bergman operator $|P|$, defined by
    \[
    |P|(f)(z)=\int_D f(\zeta) | \widetilde{B}_D(\zeta,z)| dV(\zeta),\quad z\in D, \,f\in L^p(D),
    \]
    where $\widetilde{B}_D$ is the Bergman kernel of the domain $D$, is a bounded operator on $L^p(D)$.
     \item[(ii)] The Bergman projection acts as the identity operator on $A^p(D)$.
\end{itemize}
  Both conditions are known to hold for bounded $\cont^2$-smooth strongly $\mathbb{C}$-convex domains. Specifically, condition (i) was established in \cite{LS12}, while by \cite[Lemma 2.8]{CEM19} and condition (i), condition (ii) is equivalent to $A^2(\Omega)\cap A^p(\Omega)$ being dense in $A^p(\Omega)$, which follows from \cite[Proposition 7.1]{LS12}.}
  Following the same argument but switching the roles of $p$ and $q$, we obtain that $\Phi_q$ is also surjective. Hence, from {\cite[Corollary 2.14]{CEM19}}, it follows that $\Phi_p$ is injective, which completes the proof.
\end{proof} 

Next, we introduce the Borel transform on $A^2(\C^n,\omega_D)$, which allows us to relate $\F_{n+1}$ with $\mathcal L$, to establish Theorem~\ref{th:PW bergman}.

%An entire function $F\in\hol(\C^n)$ is said to be of exponential type, if there exist constants $A,B>0$ such that
%\[
%|F(z)|\leq B e^{A H_D(z)},\quad  z\in\C^n,
%\]
%where $H_D$ is the support function of $D$. 
Let $D\subset\C^n$ be a bounded convex domain. Let $\hol_{\operatorname{\operatorname{exp}}}(D)$ denote the space of entire functions $F\in\hol(\C^n)$ such that, for every $\eps>0$, there exist a $C_\eps>0$ so that
\be\label{eq:Borel dfn cond}
|F(z)|\leq C_\epsilon e^{(1+\eps)H_D(z)},\quad z\in\C^n.
\ee The Borel transform on $\hol_{\operatorname{\operatorname{exp}}}(D)$ is defined as
\be\label{eq:def borel}
\mathfrak B_n\ltt F\rtt(z)=\int_0^\infty F(tz)e^{-t}t^n dt,\quad F\in \hol_{\operatorname{\operatorname{exp}}}(D).
\ee
It follows from the definition that $\mathfrak B_n(F)\in\hol(D^\circ)$, where $D^{\circ} $ is the polar set of $D$. Furthermore, it is known that $\mathfrak B_n(F)$ has a holomorphic extension to $D^*$; see \cite{MA67}, or \cite[page~139]{APS}. Next, we show that for planar strongly convex domain, $\mathcal B_1$ is an isomorphism between  $A^2(\C,\omega_D)$ and $A^2(D^*)$, where recall that $\omega_D=e^{-2H_\Om(z)}\|z\|^{\frac{3}{2}}\ltt dd^cH_D\rtt$. This result essentially follows from \cite[Preliminary theorem]{IY04}. However, the weighted Bergman space appearing in \cite{IY04} is different from $\omega_D$. 
  Thus, in order to establish the isomorphism between $A^2(\C,\omega_D)$, and $A^2(D^*)$, we first show that the weighted Bergman space appearing in \cite{IY04}, and $A^2(\C,\omega_D)$ are normed space isomorphic.
  \begin{lemma}\label{le:comp berg sp}
    Let $D\subset\C^n$ be a bounded strongly convex domain with $0\in D$. Then the identity map is a normed space isomorphism between $A^2(\C^n,\omega_D)$ and $A^2(\C^n,\mu_D)$, where 
    \beas
    A^2(\C^n,\mu_D)=\left\{F\in\hol(\C^n):\|F\|_{\mu_D}^2=\int_{\C^n}{|F(z)|^2}{\left\|e^{\langle\cdot, z\rangle}\right\|^{-2}_{L^2(D)}}\ltt dd^cH_D\rtt^n(z)<\infty\right\},
    \eeas
    equipped with the norm $\|.\|_{\mu_D}$.
\end{lemma}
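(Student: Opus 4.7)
The plan is to reduce the claimed norm equivalence to a pointwise comparison of the two scalar densities $e^{-2H_D(z)}\|z\|^{n+1/2}$ and $\|e^{\langle\cdot,z\rangle}\|_{L^2(D)}^{-2}$ appearing in $\omega_D$ and $\mu_D$ respectively, modulo the common factor $(dd^c H_D)^n$. These turn out to be pointwise comparable everywhere except in a neighborhood of $z=0$, where the discrepancy is absorbed using sub-mean-value estimates for entire functions.

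The principal step is the Laplace-type asymptotic
\bes
\|e^{\langle\cdot,z\rangle}\|_{L^2(D)}^2 = \int_D e^{2\rea\langle\zeta,z\rangle}\,dV(\zeta)\sim C(z/\|z\|)\,e^{2H_D(z)}\,\|z\|^{-(n+1/2)},\qquad \|z\|\to\infty,
\ees
with $C$ continuous and strictly positive on $S^{2n-1}$. By the strong convexity of $D$, for each $\hat z\in S^{2n-1}$ the linear phase $\zeta\mapsto\rea\langle\zeta,\hat z\rangle$ attains its maximum $H_D(\hat z)$ over $\overline D$ at a unique boundary point $\zeta^*(\hat z)\in bD$, identified with $\hat z$ via the (inverse) Gauss map, and the Hessian of the phase in the tangent directions to $bD$ at $\zeta^*(\hat z)$ is uniformly positive definite in $\hat z$. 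Localizing to a boundary patch near $\zeta^*(\hat z)$ and using normal-tangent coordinates, the Laplace method contributes $\|z\|^{-1}$ from the single normal direction and $\|z\|^{-(2n-1)/2}$ from the $2n-1$ tangent Gaussian integrals, which combine to the claimed $\|z\|^{-(n+1/2)}$. The constant $C(\hat z)$ is determined by the boundary curvature data at $\zeta^*(\hat z)$ and is bounded above and below by positive constants on the compact sphere. Consequently, the densities of $\omega_D$ and $\mu_D$ relative to $(dd^c H_D)^n$ are pointwise comparable on $\{\|z\|\ge R\}$ for some $R>0$, and by continuity on every compact subset of $\C^n\setminus\{0\}$, hence on $\{\|z\|\ge\delta\}$ for any $\delta>0$.

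What remains is the region $\{\|z\|<\delta\}$, where pointwise comparison fails because $\omega_D$ vanishes at the origin while $\mu_D$ does not. For an entire $F$, both $\int_{\|z\|<\delta}|F|^2\omega_D$ and $\int_{\|z\|<\delta}|F|^2\mu_D$ are bounded by a constant multiple of $\sup_{\|z\|\le\delta}|F|^2$, since the total masses of $\omega_D$ and $\mu_D$ over this ball are finite (the singularity $\|z\|^{-n}$ coming from $(dd^c H_D)^n$ is integrable in $\C^n$). By the sub-mean-value inequality for the plurisubharmonic function $|F|^2$, this supremum is controlled by $C\int_A |F|^2\,dV$ for any fixed annulus $A=\{r_1\le\|z\|\le r_2\}$ bounded away from $0$. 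On $A$, the Lebesgue densities of $\omega_D$ and $\mu_D$ are both bounded below by positive constants, so $\int_A|F|^2\,dV$ is dominated by either $\|F\|_{\omega_D}^2$ or $\|F\|_{\mu_D}^2$. Combining this local bound with the pointwise comparability of the weights on $\{\|z\|\ge\delta\}$ gives $\|F\|_{\omega_D}\asymp\|F\|_{\mu_D}$, which is the claim.

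The main technical obstacle is the uniform Laplace asymptotic, specifically keeping the leading constant $C(\hat z)$ bounded away from $0$ and $\infty$ uniformly in $\hat z$; this relies on $\cont^2$-smoothness of $bD$ together with uniform lower bounds on the boundary Hessian of the defining function, both of which follow from the assumed strong convexity and the compactness of $S^{2n-1}$.
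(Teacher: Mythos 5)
Your proposal is correct and establishes the key estimate
\begin{equation*}
\left\|e^{\langle\cdot,z\rangle}\right\|_{L^2(D)}^2\approx e^{2H_D(z)}\|z\|^{-n-\frac12},\qquad\|z\|\geq1,
\end{equation*}
by a genuinely different route than the paper. The paper passes to polar coordinates via \eqref{eq:polar coordinate}, sandwiches the boundary integral $\int_{bD}e^{2r(\rea\langle\xi,z\rangle-H_D(z))}d\sigma_D(\xi)$ between analogous integrals over $b\mathbb B^n$ using estimates quoted from \cite{AC24}, and then evaluates the remaining radial integral by a change of variable; the proof is essentially a reduction to explicit sphere computations imported from that earlier paper. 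You instead attack the Laplace-type integral $\int_D e^{2\rea\langle\zeta,z\rangle}dV(\zeta)$ head-on by a boundary stationary-phase argument: localize to the unique maximizer $\zeta^*(\hat z)\in bD$ of the linear phase, use graph coordinates in which $D$ is $\{v>\psi(u)\}$ with $\psi$ uniformly strongly convex by the $\cont^2$ strong convexity of $bD$, extract $\|z\|^{-1}$ from the normal direction and $\|z\|^{-(2n-1)/2}$ from the $2n-1$ tangential Gaussian integrals, and bound the leading constant away from $0$ and $\infty$ by compactness of $S^{2n-1}$. Your bookkeeping is correct and yields exactly the exponent $-(n+\frac12)$. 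Your treatment of the region $\{\|z\|<\delta\}$ via sub-mean-value estimates and the finiteness of the weighted masses (using homogeneity of degree $-n$ of $(dd^cH_D)^n$) is the same structural idea the paper invokes by reference to \cite[Proposition~7.2]{AC24}. The trade-off: the paper's argument is shorter on the page because it leans on \cite{AC24}, while yours is self-contained and in fact gives a genuine asymptotic rather than only a two-sided bound; the price is the extra care you correctly flag about uniformity of the Laplace expansion in $\hat z$, which requires the $\cont^2$ modulus of continuity of the boundary Hessian to be controlled uniformly --- available here by compactness and the strong-convexity hypothesis.
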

\begin{proof}
    Following the same idea as in the proof of \cite[Proposition 7.2]{AC24}, it is sufficient to prove that for any $z\in\C^n$, satisfying $\|z\|\geq1$,
       \be\label{eq:comp weight}
       \left\|e^{\langle\cdot, z\rangle}\right\|^2_{L^2(D)}\approx {e^{2H_D(z)}}\|z\|^{-n-\frac{1}{2}}.
       \ee 
       To show this, by applying \eqref{eq:polar coordinate} to $D$, we get that
       \bea
       {e^{-2H_D(z)}}\left\|e^{\langle\cdot, z\rangle}\right\|^2_{L^2(D)}&\approx&  { \int_0^1\int_{bD} e^{2r\rea\langle\xi,z\rangle-2H_D(z)}} r^{2n-1} d\sigma_D(\xi) dr\nonumber\\
       \label{eq:berg eq key est} &=& \int_0^1 e^{2(r-1)H_D(z)} r^{2n-1}\int_{bD} e^{2r\ltt\rea\langle\xi,z\rangle-H_D(z)\rtt} d\sigma_D(\xi) dr. 
       \eea
       Also, applying \cite[(7.7)-(7.11)]{AC24}, we obtain that for any $z\in\C^n$,  satisfying $\|z\|\geq1$, and $r\in (0,1)$,
       \beas
      \int_{b\mathbb B^n} e^{-2C_1\|z\|\ltt1-\rea\langle\eta,z\rangle\rtt} d\sigma_{\mathbb B^n}(\eta)\lesssim \int_{bD} e^{2r\ltt\rea\langle\xi,z\rangle-H_D(z)\rtt} d\sigma_D(\xi)\lesssim \int_{b\mathbb B^n} e^{-2rC_2\|z\|\ltt1-\rea\langle\eta,z\rangle\rtt} d\sigma_{\mathbb B^n}(\eta),
       \eeas
       where $C_1,C_2$ are positive constants not depending on either $r$ or $z$. Furthermore, following the same computations as in the proof of \cite[Lemma~7.1]{AC24} to obtain \cite[(7.4),(7.5)]{AC24}, we get that for any $z\in\C^n$,  satisfying $\|z\|\geq1$, and $r\in (0,1)$, 
       \beas
       \|z\|^{n-\frac{1}{2}}\int_{b\mathbb B^n} e^{-2C_1\|z\|\ltt1-\rea\langle\eta,z\rangle\rtt} d\sigma_{\mathbb B^n},(\eta)&\gtrsim& 1\\
        r^{n-\frac{1}{2}}\|z\|^{n-\frac{1}{2}}\int_{b\mathbb B^n} e^{-2rC_2\|z\|\ltt1-\rea\langle\eta,z\rangle\rtt} d\sigma_{\mathbb B^n}(\eta)&\lesssim& 1.
        \eeas
     From \eqref{eq:berg eq key est}, it now follows that for any $z\in\C^n$,  satisfying $\|z\|\geq1$, 
        \bea\label{eq:berg eq prel est}
        \|z\|\int_0^1 e^{-{(1-r)H_D(z)}}r^{2n-1} dr\lesssim\|z\|^{n+\frac{1}{2}}{e^{-2H_D(z)}}\|e^{\cdot z}\|^2_{L^2(D)}\lesssim \|z\|\int_0^1 e^{-{(1-r)H_D(z)}} r^{n-\frac{1}{2}} dr.
        \eea
        {
        Moreover, as there exists $r_1,r_2>0$, such that $\mathbb B^n(r_1)\subset D\subset \mathbb B^n(r_2)$, for any $z\in \C^n\setminus\{0\}$, 
        \bea\label{eq:support radial comp}
        r_1 \|z\|\leq H_D(z)\leq r_2\|z\|.
        \eea
Combining \eqref{eq:support radial comp} with the change of variable $r'=(1-r)H_D(z)$ yields that for any $z\in\C^n$,  satisfying $\|z\|\geq1$,
        \beas
     \|z\|  \int_0^1 e^{-{(1-r)H_D(z)}}r^{2n-1} dr&\approx&
     H_D(z)  \int_0^1 e^{-{(1-r)H_D(z)}}r^{2n-1} dr\\
     &=& \int_0^{H_D(z)} e^{-r'}\ltt1-\frac{r'}{H_D(z)}\rtt^{2n-1} dr'\\
        &\leq& \int_0^{H_D(z)} e^{-r'} dr'\\
        &=&1-e^{-H_D(z)}\lesssim 1,
        \eeas
       Also, by \eqref{eq:support radial comp}, $\|z\|\geq 1$, implies $H_D(z)\geq r_1$. Thus, for $z\in\C^n$, satisfying $\|z\|\geq 1$,
\beas
\int_0^{H_D(z)} e^{-r'}\ltt1-\frac{r'}{H_D(z)}\rtt^{2n-1} dr'&\geq& \ltt\frac{1}{2}\rtt^{2n-1}\int_0^{\frac{r_1}{2}} e^{-r'} dr'\approx 1. 
\eeas
Hence, we get that for any $z\in\C^n$,  satisfying $\|z\|\geq1$,
        \beas
     \|z\|  \int_0^1 e^{-{(1-r)H_D(z)}}r^{2n-1} dr&\approx& 1.
     \eeas
     A similar computation yields, for any $z\in\C^n$,  satisfying $\|z\|\geq1$,
\beas
          \|z\| \int_0^1 e^{-{(1-r)H_D(z)}}r^{n-\frac{1}{2}} dr &\approx& 1.
        \eeas
        Finally, combining these with \eqref{eq:berg eq prel est}, we obtain \eqref{eq:comp weight}.}
\end{proof}
\begin{lemma}\label{le:borel one d}
    Let $D\subset\C$ be a bounded strongly convex domain with $0\in D$. Then $\mathfrak B_1$ is a normed space isomorphism between $A^2(\C,\omega_D)$ and $A^2(D^*)$. 
    \end{lemma}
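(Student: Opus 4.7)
The plan is to combine Lemma~\ref{le:comp berg sp} with the planar Paley--Wiener-type result of Napalkov~Jr.--Yulumukhamtov \cite[Preliminary theorem]{IY04}, rather than to carry out an independent analysis of $\mathfrak B_1$. The architecture of the argument is dictated by the remark preceding the lemma: \cite{IY04} already identifies $\mathfrak B_1$ as an isomorphism from a certain weighted Bergman space on $\C$ onto $A^2(D^*)$, but the weight used there is not $\omega_D$; the discrepancy is precisely what Lemma~\ref{le:comp berg sp} is designed to absorb.

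First, I would invoke Lemma~\ref{le:comp berg sp} with $n=1$: it tells us that the identity map realises a normed space isomorphism between $A^2(\C,\omega_D)$ and $A^2(\C,\mu_D)$, where
\[
A^2(\C,\mu_D)=\Bigl\{F\in\hol(\C):\int_\C |F(z)|^2\,\bigl\|e^{\langle\cdot,z\rangle}\bigr\|^{-2}_{L^2(D)}\,dd^cH_D(z)<\infty\Bigr\}.
\]
Consequently, establishing that $\mathfrak B_1:A^2(\C,\omega_D)\to A^2(D^*)$ is a normed space isomorphism reduces to establishing the same statement for $\mathfrak B_1:A^2(\C,\mu_D)\to A^2(D^*)$.

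The latter assertion is precisely the content of the Preliminary theorem of \cite{IY04}, whose hypotheses (bounded convex planar domain containing the origin) are subsumed by our standing assumption that $D\subset\C$ is bounded, strongly convex, and contains $0$. Invoking that theorem and composing with the identity map of Lemma~\ref{le:comp berg sp} closes the argument. I expect the only delicate point to be a careful verification that the weighted space appearing in \cite{IY04} genuinely coincides with $A^2(\C,\mu_D)$ as defined above --- that is, matching their normalization conventions for $dd^c H_D$ and for the $L^2(D)$-norm of the exponential kernel with ours. This is a bookkeeping check at the level of definitions and does not present a genuine analytic obstacle.
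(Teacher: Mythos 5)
Your overall strategy (reduce via Lemma~\ref{le:comp berg sp} and then quote \cite{IY04}) is the same as the paper's, but you have misremembered or oversimplified what \cite[Preliminary theorem]{IY04} actually asserts, and the step you describe as ``bookkeeping'' is not where the real work lies. The Preliminary Theorem of \cite{IY04} does \emph{not} give an isomorphism of $A^2(\C,\mu_D)$ with $A^2(D^*)$, nor does it concern the operator $\mathfrak B_1$. It establishes that the Laplace-type transform
\[
\widetilde{\mathfrak B}(F)(z)=\int_0^\infty F(t)\,e^{-tz}\,dt
\]
is an isomorphism of $A^2(\C,\mu_D)$ onto the Dirichlet-type space
\[
B_2^1(\C\setminus D)=\left\{f\in\hol(\widehat{\C}\setminus D):f(\infty)=0,\ \int_{\C\setminus D}|f'|^2\,dV<\infty\right\},
\]
and it is the \emph{inverse} of $\widetilde{\mathfrak B}$ that \cite{IY04} calls the Borel transform. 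Your plan therefore leaves two genuine steps unaddressed.

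First, one must pass from $B_2^1(\C\setminus D)$ to $A^2(D^*)$. This is done by the inversion $\tau(\lambda)=1/\lambda$, which biholomorphically identifies $D^*$ with $\widehat{\C}\setminus\overline D$ and induces a normed space isomorphism $\mathfrak T:B_2^1(\C\setminus D)\to A^2(D^*)$ via $F\mapsto (F\circ\tau)'$. Second, and more importantly, one must verify that the composite $\mathfrak T\circ\widetilde{\mathfrak B}$ actually equals $\mathfrak B_1$, which requires the computation
\[
(\mathfrak T\circ\widetilde{\mathfrak B}(F))(z)=\frac{1}{z^2}\int_0^\infty F(t)e^{-t/z}\,t\,dt=\int_0^\infty F(t'z)e^{-t'}\,t'\,dt'=\mathfrak B_1(F)(z),\qquad z\in D^*,
\]
after the substitution $t'=t/z$. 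Without these two steps there is no connection between the operator $\mathfrak B_1$ appearing in the lemma and the operator treated in \cite{IY04}; the claim that ``\cite{IY04} already identifies $\mathfrak B_1$ as an isomorphism \ldots onto $A^2(D^*)$'' is simply false as stated, and the gap is not one of normalization conventions but of an entire intermediate isomorphism and an explicit operator identity that must be supplied.
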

    \begin{proof}
    Let us consider the following space:
       \[
       B_2^1\ltt\C\setminus D\rtt=\left\{f\in\hol( \widehat{\C}\setminus D): f(\infty)=0,\,\int_{\C\setminus D}|f'(\lambda)|^2 dV(\lambda)<\infty\right\},
       \]
       equipped with the norm $\|f\|_{B_2^1\ltt\C\setminus D\rtt}^2=\ltt\int_{\C\setminus D}|f'(\lambda)|^2 dV(\lambda)\rtt$.
       Since $D^*$ is biholomorphic to $\widehat{\C}\setminus D$ via the map $\tau:\lambda\rightarrow\frac{1}{\lambda}$, the following map
       \beas
       \mathfrak T: &B_2^1\ltt\C\setminus D\rtt\rightarrow A^2(D^*),\\
       & F\rightarrow (F\circ \tau)',
       \eeas
       is a normed space isomorphism between $B_2^1\ltt\C\setminus D\rtt$, and $A^2(D^*)$.
    From \cite[Preliminary theorem]{IY04}, it follows that $\widetilde {\mathfrak B}:A^2(\C,\mu_D)\rightarrow B_2^1(\C\setminus D)$, defined as
    \[
    \widetilde{\mathfrak B}(F)(z)=\int_0^\infty F(t) e^{-tz} dt,
    \]
is a normed space isomorphism between $A^2(\C,\mu_D)$ and $B_2^1(\C\setminus D)$. Note that, in \cite{IY04}, the inverse of $\widetilde{\mathfrak{B}}$ has been considered as the Borel transform. For $z\in D^*$, applying a change of variable $t'=\frac{t}{z}$ we get that  
\beas
 \ltt\mathfrak T\circ\widetilde{\mathfrak B}(F)\rtt(z) &=& \frac{1}{z^2}\int_0^\infty F(t)e^{-t/z}t dt\\
&=&\int_0^\infty F(t'z) e^{-t'} t'dt'\\
&=& \mathfrak B_{1}(F)(z).
\eeas
Combining all these with Lemma~\ref{le:comp berg sp}, it follows that, $\mathfrak B_1$ is a normed space isomorphism between $A^2(\C,\omega_D)$ and $A^2(D^*)$. 
\end{proof}

\begin{section}{Proof of Theorem~\ref{th:main1}}
Fix $p\in(1,\infty)$. As discussed in the introduction, the proof splits into three parts: the $L^p$-boundedness, the injectivity, and the surjectivity of $\mathcal F_{n+1}$. 

\subsection{$\mathbf{L^p}$-boundedness of the Fantappi{\`e} transform.}
% First, we show that $\mathcal F_{n+1}$ maps $A^p(\Om)$ into $\hol(\Om^*)$. Let us fix $f\in A^p\ltt\Om\rtt$, and a $z_0\in\Om^*$. Then there exists a $r_0\in(0,1)$, such that $z_0\in r_0\Om^*$. By definition of $\Om^*$, it follows that $1-\left\langle\zeta,z\right\rangle\neq0,\, \forall \ltt\zeta,z\rtt\in \Om\times r_0\Om^*$. Hence $\ltt\zeta,z\rtt\rightarrow\ltt1-\left\langle\zeta,z\right\rangle\rtt^{-(n+1)}$ is a holomorphic function on $\ltt\zeta,z\rtt\in \Om\times r_0\Om^*$. Now applying the Morrera's theorem on each $z_j$-variable in the definition of $\mathcal F_{n+1}\ltt f\rtt$, where $z=\ltt z_1,z_2,\cdots,z_n\rtt\in r_0\Om^*$, we can conclude that, $\mathcal{F}_{n+1}(f)\in \hol\ltt r_0\Om^*\rtt$. Thus $\mathcal{F}_{n+1}(f)$ is holomorphic in a neighborhood of $z_0$, hence it is holomorphic on $\Om^*$.
By Lemma~\ref{le:cov dom}, applied to $D=\Om^*$, 
{
\beas
\mathcal F_{n+1}\ltt f\rtt(z)&=&\frac{n!}{\pi^n}\int_{\Om} \frac{\overline{ f(\zeta)}}{\ltt 1-\langle \zeta,z\rangle\rtt^{n+1}}dV(\zeta)\\
&=& \frac{n!}{\pi^n}\int_{\Om^*} \dfrac{\overline{\ltt f\circ T_{\Om^*}\rtt}(\zeta)}{\ltt 1-\langle T_{\Om^*}(\zeta),z\rangle\rtt^{n+1}} h_{\Om^*}(\zeta) dV(\zeta) \\
&=&
\frac{n!} {\pi^n}\int_{\Om^*}\dfrac{\overline{\ltt f\circ T_{\Om^*}\rtt}(\zeta)\left\langle \partial m_{\Om^*}(\zeta),\zeta\right\rangle^{n+1} h_{\Om^*}(\zeta)}{\ltt\left\langle \partial m_{\Om^*}(\zeta),\zeta\right\rangle-m_{\Om^*}^2(\zeta)\left\langle \partial m_{\Om^*}(\zeta),z\right\rangle\rtt^{n+1}} dV(\zeta)\\
&=&\frac{n!} {\pi^n}\int_{\Om^*}\dfrac{\overline{\ltt f\circ T_{\Om^*}\rtt}(\zeta)\left\langle \partial m_{\Om^*}(\zeta),\zeta\right\rangle^{n+1} h_{\Om^*}(\zeta)}{\left\langle \partial m_{\Om^*}(\zeta),\zeta-m_{\Om^*}^2(\zeta)z\right\rangle^{n+1}} dV(\zeta)\\
&=& \frac{n!}{\pi^n}\int_{\Om^*} \overline{\ltt f\circ T_{\Om^*}\rtt}(\zeta) h_{\Om^*}(\zeta) K_{\Om^*}\ltt \zeta,z\rtt^{-n-1} dV(\zeta),
\quad f\in A^p\ltt\Om\rtt, z\in\Om^*,
\eeas
}
where $K_{\Om^*}$ is as given by \eqref{eq:def of FLS ker}.

Now for a bounded strongly convex domain $D\subset\Cn$ containing the origin, let 
\bes 
|\mathcal K_D|(f)(z)=\int_{\Om} f(\zeta) |K_D\ltt\zeta,z\rtt|^{-n-1} dV(\zeta),\quad z\in D.
\ees
By Lemma~\ref{le:comp of two ker}, we have that for any $g\in L^p\ltt D\rtt$,
\be\label{eq:int comp}
\left|\mathcal{K}_{D}\right|\ltt\left|g\right|\rtt(z)\leq \tilde{C}\left|\mathcal{B}_{D}\right|\ltt\left|g\right|\rtt(z), \quad \forall z\in D,
\ee
where $\tilde{C}$ is a positive constant, which is independent of $z$ and $g$. 

Applying \eqref{eq:int comp} to $D=\Om^*$, we get for $g\in L^p\ltt\Om\rtt$,
   \beas
   \|\mathcal F_{n+1}(g)\|_{L^p(\Om^*)}&=&\left\|\mathcal K_{\Om^*}\ltt\overline{\ltt g\circ T_{\Om^*}\rtt} h_{\Om^*}\rtt\right\|_{L^p(\Om^*)}\\
   &\leq& \left\|\left|\mathcal K_{\Om^*}\right|\ltt\left|\overline{\ltt g\circ T_{\Om^*}\rtt}\right|\left|h_{\Om^*}\right|\rtt\right\|_{L^p(\Om^*)}\\
   &\lesssim& \left\|\left|\mathcal B_{\Om^*}\right|\ltt\left|\overline{\ltt g\circ T_{\Om^*}\rtt}\right|\left|h_{\Om^*}\right|\rtt\right\|_{L^p(\Om^*)}\\
   &\lesssim&\left\| \overline{\ltt g\circ T_{\Om^*}\rtt}h_{\Om^*}\right\|_{L^p\ltt\Om^*\rtt}\\
   &\lesssim& \left\| g\right\|_{L^p\ltt\Om\rtt}
   \eeas
   where the last inequality follows due to the boundedness of $h_{\Om^*}$. This shows that $\mathcal F_{n+1}$ is a bounded operator from $A^p\ltt\Om\rtt$ to $L^p\ltt\Om^*\rtt$, $\forall p\in(1,\infty)$.
\end{section}
\subsection{Injectivity of the Fantappi{\`e} transform} Recall that $\mathcal F_{n+1}$ on $A^p(\Om)$ is, in fact, the composition
\bes
A^p(\Om)\overset{\iota} {\hookrightarrow}L^p(\Om)\xrightarrow\theta\hol'(\overline{\Om})\xrightarrow{\mathcal F_{n+1}} \hol(\Om^*),
\ees
where $\iota$ is the inclusion map, and $\theta$ is as in \eqref{eq: theta}. By the Martineau--Aizenberg duality theorem, the right-most map is already known to be injective. Thus, we must show that, for $g\in A^p(\Om)$, if  $(\theta\circ\iota)(g)=\mu_g$ given by
\bes
\mu_g:f\mapsto \int_{\Om}\overline {g(\zeta)} f(\zeta)dV(\zeta),\,\quad f\in \hol\ltt\overline\Om\rtt
\ees
is the zero map, then $g\equiv 0$. Let $g\in A^p\ltt\Om\rtt$, such that $\mu_{g}=0$. 
For any $f$ in the dense subspace $\hol\ltt\overline\Om\rtt \subset A^q\ltt\Om\rtt$, $\mu_g(f)=\overline{\Phi_g(f)}$, where $\Phi_g$ is as in Lemma~\ref{le:dual of Ap}. Thus, $\Phi_g\equiv 0$, which implies that $g\equiv 0$. This completes the proof of injectivity.
   
% % Onto section %%
\subsection{Surjectivity of the Fantappi{\`e} transform}
Let $g\in A^p\ltt\Om^*\rtt$, where $p\in\ltt1,\infty\rtt$. We define a linear functional $F_g$ on $A^q\ltt\Om\rtt$, where $p^{-1}+q^{-1}=1$, as follows
\[
F_g(\psi)=\frac{1}{(2i)^nn!}\int_{\Om} \psi\ltt\zeta\rtt \ltt g\circ T_\Om\rtt\ltt\zeta\rtt\ltt\mathfrak H\circ T_\Om\rtt\ltt\zeta\rtt dV(\zeta),\quad \forall \psi\in A^q\ltt\Om\rtt,
\]
where $\mathfrak H:\Om^*\rightarrow\C$, defined as 
\be\label{eq:mathfrak H}
\mathfrak H\ltt\eta\rtt= {\mathfrak h_{\Om^*}(\eta)}{h_{\Om}\ltt T_{\Om^*}\ltt\eta\rtt\rtt}\ltt\frac{1+\rho_{\Om^*}(\eta)}{\left\langle\partial \rho_{\Om^*}(\eta),\eta\right\rangle}\rtt^{n+1},
\ee
where $\mathfrak h_{\Om^*}$, $ h_{\Om}$ are as in \eqref{eq:comp vol} and  \eqref{eq:cov}, respectively, and $T_\Om, T_{\Om^*}$ are as in \eqref{eq:cov map}. 
Since $h_{\Om}(T_{\Om^*}(\eta))$, $\left|\mathfrak h_{\Om^*}(\eta)\right|$, and $\left|\frac{1+\rho_{\Om^*}(\eta)}{\left\langle\partial \rho_{\Om^*}(\eta),\eta\right\rangle}\right|$ are bounded above by positive constants on $\Om^*$,  $\left|\mathfrak H\right|$ is bounded above on $\Om^*$. Thus, it follows that
\beas
\left|F_g\ltt\psi\rtt\right|&\lesssim&\ltt \int_\Om\left| g\circ T_\Om\right|^p\ltt\zeta\rtt dV\ltt\zeta\rtt\rtt^{\frac{1}{p}}\|\psi\|_{L^q\ltt\Om\rtt},\\
&\lesssim& \|g\|_{L^p\ltt\Om^*\rtt}\|\psi\|_{L^q\ltt\Om\rtt}.
\eeas
This shows that $F_g$ is a bounded linear functional on $A^q\ltt\Om\rtt$. Thus, by Lemma~\ref{le:dual of Ap}, there exists a $\phi_g\in A^p\ltt\Om\rtt$, such that 
\[
F_g(\psi)=\int_\Om \overline {\phi_g\ltt\zeta\rtt} \psi\ltt\zeta\rtt dV\ltt\zeta\rtt.
\]
Now, for a fixed $z\in\Om^*$, taking $\psi(\zeta)=\frac{n!}{\pi^n}\ltt1-\left\langle\zeta,z\right\rangle\rtt^{-n-1}$, we obtain that
\bea
\nonumber\mathcal{F}_{n+1}(\phi_g)(z)=\frac{n!}{\pi^n}\int_\Om \frac{\overline{\phi_g\ltt\zeta\rtt}}{\ltt1-\left\langle\zeta,z\right\rangle\rtt^{n+1}} dV\ltt\zeta\rtt&=& \frac{1}{(2\pi i)^n}\int_\Om \dfrac{\ltt g\circ T_\Om\rtt\ltt\zeta\rtt}{\ltt1-\left\langle\zeta,z\right\rangle\rtt^{n+1}}\ltt\mathfrak H\circ T_\Om\rtt\ltt\zeta\rtt dV(\zeta)\\
\nonumber(\text{since }T_\Om^{-1}=T_{\Om^*})\quad 
&=&\frac{1}{(2\pi i)^n}\int_\Om \dfrac{g(T_\Om(\zeta))\mathfrak H(T_\Om(\zeta))}{\ltt1-\left\langle T_{\Om^*}(T_\Om(\zeta)),z\right\rangle\rtt^{n+1}} \frac{h_\Om(\zeta)}{h_\Om(T_{\Om^*}(T_\Om(\zeta)))}dV(\zeta)\\
\nonumber(\text{by Lemma \ref{le:cov dom}})\quad
&=&\frac{1}{(2\pi i)^n}\int_{\Om^*} \dfrac{g(\eta)\mathfrak H(\eta)}{\ltt1-\left\langle T_{\Om^*}(\eta),z\right\rangle\rtt^{n+1}} \frac{1}{h_\Om(T_{\Om^*}(\eta))} dV(\eta)\\
\nonumber(\text{by \eqref{eq:def of FLD kera}, \eqref{eq:tildeK}, and \eqref{eq:mathfrak H}})\quad
&=&\frac{1}{(2\pi i)^n}\int_{\Om^*}\dfrac{g\ltt\eta\rtt}{\widetilde{K}_{\Om^*}\ltt\eta,z\rtt^{n+1}}\mathfrak h_{\Om^*}(\eta) dV(\eta)\\
\nonumber(\text{by \eqref{eq:comp vol}}) \quad 
&=& \frac{1}{(2\pi i)^n}\int_{\Om^*}g(\eta)\ltt\overline\partial_{\eta} G \rtt^n(\eta,z)\\
(\text{by Lemma~\ref{le:repr}})\quad \label{eq:repr fant}&=& g(z).
\eea
This completes the proof of Theorem~\ref{th:main1}.
\begin{section}{Proof of Theorem~\ref{th:PW bergman}}
We begin by proving that $\mathfrak B_n$ is an normed space isomorphism between $A^2(\C^n,\omega_\Om)$, and $A^2(\Om^*)$.
\begin{lemma}\label{le:borel bergman}
 
    Let $\Om\subset\C^n$ be a domain which satisfies the hypothesis of Theorem~\ref{th:PW bergman}.  Then,
    \bea\label{eq:norm distor borel}
   C_1 \|f\|_{A^2\ltt\C^n,\omega_\Om\rtt} \leq \|\mathfrak B_n(f)\|_{A^2(\Om^*)}\leq  C_2 \|f\|_{A^2\ltt\C^n,\omega_\Om\rtt},\quad f\in A^2\ltt\C^n,\omega_\Om\rtt,
    \eea
    where $C_1,C_2$ are positive constants not depending on $f$.
\end{lemma}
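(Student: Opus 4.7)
The plan is to reduce Lemma~\ref{le:borel bergman} to the one-dimensional result Lemma~\ref{le:borel one d} by slicing along complex lines through the origin. First, Lemma~\ref{le:comp berg sp} lets us replace $\omega_\Om$ by the equivalent weight $\mu_\Om$; this removes the explicit Monge--Amp\`ere factor $(dd^c H_\Om)^n$ in favour of the reproducing-kernel quantity $\|e^{\langle\cdot,z\rangle}\|_{L^2(\Om)}^{-2}$, and it reduces the lemma to showing a two-sided norm equivalence
\[
\|F\|_{\mu_\Om} \approx \|\mathfrak B_n F\|_{A^2(\Om^*)}, \qquad F \in A^2(\C^n, \mu_\Om).
\]

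For each $\xi \in b\Om^*$, set $F_\xi(\lambda) := F(\lambda\xi)$ and $g_\xi(s) := s^{n-1}F_\xi(s)$. A change of variable $s = t\lambda$ in \eqref{eq:def borel} yields the slice identity
\[
\mathfrak B_n F(\lambda\xi) = \lambda^{-(n-1)}\,\mathfrak B_1(g_\xi)(\lambda),
\]
so the restriction of $\mathfrak B_n F$ to the complex line $\C\xi$ is, up to the explicit algebraic factor $\lambda^{-(n-1)}$, a planar Borel transform of the modified slice $g_\xi$. The slice $D_\xi := \{\lambda\in\C : \lambda\xi \in \Om^*\}$, being the intersection of the complex line $\C\xi$ with the strongly convex $\Om^*$, is a bounded planar strongly convex domain; and a direct computation shows $D_\xi$ coincides with the planar dual complement of the linear image $\Om_\xi := \{\langle\zeta,\xi\rangle : \zeta \in \Om\}$, which is itself bounded, convex, and contains the origin. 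With $\Om_\xi$ as the base domain, Lemma~\ref{le:borel one d} applied to $g_\xi$ gives a slice-level norm equivalence
\[
\|g_\xi\|_{A^2(\C, \omega_{\Om_\xi})} \approx \|\mathfrak B_1 g_\xi\|_{A^2(\Om_\xi^*)}.
\]

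The final step is to pass to a global estimate: integrate the slice-wise equivalences against the polar decomposition of Lemma~\ref{le:cov} applied to $\Om^*$, checking that the factor $|\lambda|^{-2(n-1)}$ appearing from the slice identity combines with the radial Jacobian $r^{2n-1}$ in the polar expansion of $\|\mathfrak B_n F\|_{A^2(\Om^*)}^2$ to leave a clean weighted integral on each $1$-D slice. The compactness of $b\Om^*$ and the smoothness of the data in Lemma~\ref{le:cov} then upgrade the slice-wise implicit constants to uniform ones.

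The principal obstacle lies in the required weight matching: one must verify that, after restriction to the complex line $\C\xi$, the weight $\mu_\Om$ is equivalent---with constants uniform in $\xi$---to the $1$-D weight $\omega_{\Om_\xi} = e^{-2H_{\Om_\xi}(\lambda)}|\lambda|^{3/2}(dd^c H_{\Om_\xi})(\lambda)$ required by Lemma~\ref{le:borel one d}. Both the reproducing-kernel factor $\|e^{\langle\cdot,\lambda\xi\rangle}\|_{L^2(\Om)}^{-2}$ and the Monge--Amp\`ere factor behave non-trivially under such a restriction, and the matching will rely on the strong convexity of $\Om$, the smoothness of the support function (Lemma~\ref{le:smth minkw}), asymptotic estimates of $\|e^{\langle\cdot,\lambda\xi\rangle}\|_{L^2(\Om)}$ in terms of $H_\Om(\lambda\xi)$ in the spirit of \cite[(7.7)-(7.11)]{AC24}, and a precise account of how $(dd^c H_\Om)^n$ restricts to complex lines through the origin.
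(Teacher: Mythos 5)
Your slicing strategy is essentially the one the paper uses: parametrize complex lines through the origin by $\zeta\in b\Om^*$, define the modified slice $F_\zeta(w)=F(w\zeta)w^{n-1}$, derive the identity $\mathfrak B_n F(\eta\zeta)=\eta^{-(n-1)}\mathfrak B_1 F_\zeta(\eta)$, apply Lemma~\ref{le:borel one d} on each slice, and integrate over $b\Om^*$. Your identification of $D_\xi$ with the dual complement of the linear projection $\Om_\xi$ is also correct and coincides with the paper's $M_\zeta=(\Om^*_\zeta)^*$, since both have support function $H_\Om(w\zeta)$.

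However, the step you flag as the \emph{principal obstacle} is indeed the crux, and you have not actually discharged it: you list ingredients that the matching ``will rely on'' but do not produce the two-sided estimate. The paper resolves precisely this gap by citing a fibration theorem from \cite{Li02} (Lemma~10, Lemma~23, and pages~308--309 there), which gives directly
$$\|F\|^2_{A^2(\C^n,\omega_\Om)}\approx\int_{b\Om^*}\|F_\zeta\|^2_{A^2(\C,\omega_{M_\zeta})}\,d\sigma_{\Om^*}(\zeta),$$
together with the boundary-integral equivalence \eqref{eq:bdry int equiv} needed to reassemble the $1$-D Bergman norms into $\|\mathfrak B_n F\|^2_{A^2(\Om^*)}$ via polar coordinates. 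Without such a statement (or a fresh derivation of how $(dd^c H_\Om)^n$ and the exponential weight disintegrate over slices), your argument is incomplete. Also note a small inaccuracy: Lemma~\ref{le:comp berg sp} does \emph{not} remove the $(dd^c H_\Om)^n$ factor; the weight $\mu_\Om$ still carries it, so passing from $\omega_\Om$ to $\mu_\Om$ buys you nothing on the Monge--Amp\`ere side. Indeed the paper does not invoke Lemma~\ref{le:comp berg sp} at this stage at all --- that lemma is used only inside the proof of the $1$-D statement Lemma~\ref{le:borel one d} --- and works with $\omega_\Om$ and $\omega_{M_\zeta}$ directly.
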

\begin{proof}
Let $F\in A^2(\C^n,\omega_\Om)$. For a fixed $\zeta\in \Om^*$, consider the sets
\[
\Om^*_\zeta=\left\{\eta\in\C:\eta\zeta\in \Om^*\right\},\quad M_\zeta=\ltt \Om^*_\zeta\rtt^*,
\]
and the function 
\[
F_\zeta(w)=F(w\zeta) w^{n-1},\quad w\in\C.
\]
Following the same computations as in \cite{Li02}, we obtain the following.
\begin{enumerate}
       \item[(i)] If $\varphi$ is a $\sigma_{\Om^*}$-measurable function on $b\Om^*$, then
    \be\label{eq:bdry int equiv}
    \int_{b\Om^*}\varphi(\zeta)d\sigma_{\Om^*}(\zeta)\approx\int_{b\Om^*}\int_{b\Om^*_\zeta}\varphi(\lambda\zeta)d\sigma_{\Om^*_\zeta}(\lambda) d\sigma_{\Om^*}(\zeta).
    \ee
     See \cite[Lemma~23]{Li02}.
    \item[(ii)] For any $F\in A^2(\C^n,\omega_\Om) $,
    \bea\label{eq:eqiv norm PW aux}
    \|F\|^2_{A^2(\C^n,\omega_\Om)}\approx\int_{b\Om^*}\|F_\zeta\|^2_{A^2\ltt\C,\omega_{M_\zeta}\rtt}d\sigma_{\Om^*}(\zeta).
    \eea
    See \cite[pages~308-309]{Li02}.
    \item[(iii)] $M_\zeta$ is a bounded $\cont^2$-smooth strongly convex domain in $\C$, and 
    \be\label{eq:support fns reln}
    H_{M_\zeta}(w)=H_\Om(w\zeta),\quad w\in\C.
    \ee
    See \cite[Lemma~10]{Li02}.
    \item[(iv)]   
    The function $F_\zeta$ satisfies
    \be\label{eq:one d PW}
    \int_{\C}|F_\zeta(w)|^2 |w|^{\frac{3}{2}}e^{-2H_{M_\zeta}(w)} \ltt dd^cH_{M_\zeta}\rtt(w)<\infty.
    \ee
    See \cite[page~309]{Li02}.
    \item [(v)]
    $F$ satisfies \eqref{eq:Borel dfn cond}, and hence, $\mathfrak B_n F$ is well-defined on $\Om^\circ$, the polar set of $\Om$, and has an analytic continuation to $\Om^*$. See \cite[ Theorem~17]{Li02}.
\end{enumerate}
For $\eta>0$,
\beas
\eta^{(n+1)}\mathfrak B_n F\ltt\eta\zeta\rtt&=&\eta^{(n+1)}\int_0^\infty F\ltt{t\eta\zeta}\rtt t^n e^{-t}dt\\
&=&\eta^2\int_0^\infty F_{\zeta}\ltt{t\eta}\rtt{t} e^{-t}{dt},\\
&=& \eta^{2}\mathfrak B_1 F_\zeta\ltt {\eta}\rtt.
\eeas
Note that the LHS is convergent for all $\eta$ satisfying ${\eta}\zeta\in \Om^\circ$. Equivalently by \eqref{eq:support fns reln}, {$H_{\Om}(\eta\zeta)=H_{{M_\zeta}}(\eta)<1$, i.e., $\eta\in {M_\zeta}^{\circ} $}. Similar to $\mathfrak B_nF$ , $\mathfrak B_1 F_\zeta$ has a holomorphic extension to 
$ {M_\zeta}^*=\Om^*_\zeta$. From \eqref{eq:one d PW} it follows that, $F_\zeta\in A^2\ltt\C,\omega_{{M_\zeta}}\rtt$, and hence, applying Lemma~\ref{le:borel one d} we get that
\beas
\|F_\zeta\|^2_{A^2\ltt\C,\omega_{M_\zeta}\rtt}&\approx&  \left\|\mathfrak B_1F_\zeta\right\|^2_{A^2(\Om^*_\zeta)}\\
&=&\int_{\Om^*_\zeta}\left|\mathfrak B_n F_\zeta(\eta\zeta)\right|^2|\eta|^{2n-2} dV(\eta)\\
&\approx& \int_0^1\int_{b\Om^*_\zeta} \left|\mathfrak B_n F_\zeta(r\lambda\zeta)\right|^2 r^{2n-1} d\sigma_{\Om^*_\zeta}(\lambda) dr.
\eeas
Now integrating both side with respect to $\zeta$ on $b\Om^*$, and applying \eqref{eq:eqiv norm PW aux} we get that
\bea
\nonumber\|F\|^2_{A^2(\C^n,\omega_\Om)}&\approx&\int_{b\Om^*}\|F_\zeta\|^2_{A^2\ltt\C,\omega_{M_\zeta}\rtt} d\sigma_{\Om^*}(\zeta)\\
\nonumber&\approx& \int_0^1\int_{b\Om^*}\int_{b\Om^*_\zeta} \left|\mathfrak B_n F_\zeta(r\lambda\zeta)\right|^2 r^{2n-1} d\sigma_{\Om^*_\zeta}(\lambda) d\sigma_{\Om^*}(\zeta)dr\\
\nonumber(\text{by}\,\eqref{eq:bdry int equiv})&\approx&\int_0^1\int_{b\Om^*} \left|\mathfrak B_n F(r\zeta)\right|^2 r^{2n-1} d\sigma_{\Om^*}(\zeta)dr\\
\label{eq:borel norm equiv}(\text{by}\,\eqref{eq:polar coordinate})&\approx& \|\mathfrak B_n(F)\|^2_{A^2(\Om^*)}.
\eea
This shows \eqref{eq:norm distor borel}, and hence, Lemma~\ref{le:borel bergman} is proved.
\end{proof}
Let $f\in A^2\ltt \Om\rtt$. Then by definition, for any $z\in\C^n$,
\beas
\left|\mathcal L(f)(z)\right|&\leq&\int_\Om |f(\zeta)| e^{\rea \langle\zeta,z\rangle} dV(\zeta),\\
&\leq& e^{H_\Om(z)} \int_\Om |f(\zeta)| dV(\zeta), \\
&\leq& C e^{H_\Om(z)},
\eeas
where $C$ is a positive constant does not depends on $f$ or $z$.
This shows $\mathcal L(f)$ satisfies the condition in  \eqref{eq:Borel dfn cond}, and hence, $\mathcal L(f)\in \hol_{\operatorname{exp}}(\Om)$. Thus, $\mathfrak B_n\ltt\mathcal L(f)\rtt$ has a holomorphic extension to $\Om^*$. Furthermore, for $z\in\Om^*$,
\bea\label{eq:Lap borel reln}
\nonumber\mathfrak B_n\ltt\mathcal L(f)\rtt(z)&=&\int_0^\infty \int_{\Om} \overline{f(\zeta)} e^{-t\ltt-\langle\zeta,z\rangle+1\rtt} t^n dV(\zeta) dt\\
\nonumber&=& \int_\Om \overline{f(\zeta)}\int_0^\infty  e^{-t\ltt-\langle\zeta,z\rangle+1\rtt} t^n dt  dV(\zeta)\\
\nonumber&=&n!\int_\Om\frac{\overline{f(\zeta)}}{\ltt 1-\langle\zeta,z\rangle\rtt^{n+1}} dV(\zeta)\\
&=& \pi^n \F_{n+1}(f)(z).
\eea
 By Theorem~\ref{th:main1}, $\F_{n+1}$ is a normed space isomorphism between $A^2(\Om)$ and $A^2(\Om^*)$. Thus, proof of the theorem reduces to showing $\mathfrak B_n$ is a normed space isomorphsim between $A^2(\C^n,\omega_\Om)$, and $A^2(\Om^*)$. However, from Lemma~\ref{le:borel bergman}, we have already obtained the injectivity and the $L^2$-boundedness of $\mathfrak B_n$. Hence, to conclude the theorem, it suffices to show that $\mathfrak B_n$ is surjective.

Let $g\in A^2(\Om^*)$. Consider the following analytic functional on $\hol'\ltt\overline{\Om}\rtt$.
\[
\mu_g(\psi)=\int_\Om \psi(\zeta)\ltt g\circ T_\Om\rtt(\zeta) \ltt\mathfrak H\circ T_\Om\rtt(\zeta)  dV(\zeta),\quad\forall \psi\in \hol(\overline \Om),
\]
where $\mathfrak H:\Om^*\rightarrow\C$ is defined as 
\bes\label{eq:pairing}
\mathfrak H\ltt\eta\rtt= {\mathfrak h_{\Om^*}(\eta)}{h_{\Om}\ltt T_{\Om^*}\ltt\eta\rtt\rtt}\ltt\frac{1+\rho_{\Om^*}(\eta)}{\left\langle\partial \rho_{\Om^*}(\eta),\eta\right\rangle}\rtt^{n+1},
\ees
for $\mathfrak h_{\Om^*}$, $ h_{\Om}$ as in \eqref{eq:comp vol} and  \eqref{eq:cov}, respectively, and $T_\Om, T_{\Om^*}$ as in \eqref{eq:cov map}. 
Let $\widehat\mu_g$ be the Laplace transform of $\mu_g$ defined as
\[
\widehat\mu_g(z)=\mu_g\ltt e^{\langle\cdot,z\rangle}\rtt,\quad z\in\C^n.
\]
Then, $\widehat\mu_g\in\hol(\C^n)$, and satisfies the condition in \eqref{eq:Borel dfn cond}. Hence, $\mathfrak B_n\ltt\widehat \mu_g\rtt$ has an analytic continuation to $\Om^*$.
\beas
\mathfrak B_n\ltt\widehat \mu_g\rtt=\pi^n{\F_{n+1}(\mu_g)}.
\eeas

 Following the same computation as in the last paragraph of the previous section, we get that
 \[
 \F_{n+1}(\mu_g)(z)=g(z),\quad z\in \Om^*.
 \]
Also, as $g\in A^2(\Om^*)$, using the same computations in \eqref{eq:borel norm equiv} backwards, we get, $\widehat \mu_g\in A^2(\C^n,\omega_\Om)$. This shows $\frac{\widehat \mu_g}{\pi^n}$ is in the pre-image of $g$ under $\mathfrak B_n$, which shows $\mathfrak B_n$ is surjective, and the proof is complete.   
\end{section}
\begin{section}{Proof of Theorem~\ref{th:counter}}
{
Let us recall \beas
\Om=\left\{(\zeta_1,\zeta_2)\in\C^2: |\zeta_1|+|\zeta_2|<1\right\}.
\eeas
By definition, $\Om$ is a circled domain, in fact, it is a Reinhardt domain. Consequently, it follows from \cite[Proposition~18]{Li02} that (the interior of) $\Om^*$ can be given as }
   \[
   \Om^*=\left\{z\in\C^2: H_\Om(z)<1\right\},
   \]
   {where $H_\Om(\overline z)$ is the support function of $\Om$.} In this case, $H_\Om(z)=\max\{|z_1|,|z_2|\}$. Hence,
$$\Om^*=\left\{\ltt z_1,z_2\rtt\in\C^2: |z_1|<1,|z_2|<1\right\}.$$
Consider the following parameterization of $\Om$. 
\bea
\vartheta:(0,1)\times[0,1]\times [0,2\pi)^2&\rightarrow& \Om,\notag \\
      (r,s,\theta_1,\theta_2)&\mapsto&\ltt rse^{i\theta_1},r(1-s)e^{i\theta_2}\rtt.
      \label{eq:param of diamond}
      \eea
      Under this change of co-ordinates, the pull-back of the Lebesgue measure is given by
      \bea\label{eq:pullback of lbge}
      \vartheta^*\ltt dV\rtt=-\frac{1}{4}\vartheta^*\ltt dz_1 dz_2 d\overline{z_1} d\overline{z_2}\rtt=\frac{1}{2}r^3s(1-s) dr ds d\theta_1 d\theta_2.
      \eea
    Since $\Om$ is a complete Reinhardt domain, any holomorphic function has a global power series expansion on $\Om$. 
    \begin{lemma}\label{le:power series}
Let $\Om=\{(\zeta_1,\zeta_2)\in\C^2: |\zeta_1|+|\zeta_2|<1\}$.  
Then, the following holds.
\begin{itemize}
    \item[(i)] 
$f\in A^2\ltt\Om\rtt$ if and only if
    \be\label{eq:series exp diamond}
    f(\zeta_1,\zeta_2)=\sum\limits_{(m_1,m_2)\in \N^2} a_{m_1m_2}\zeta_1^{m_1}\zeta_2^{m_2},\quad \ltt\text{in} \,L^2\ltt\Om\rtt\rtt,
    \ee
       where $\ltt a_{m_1,m_2}\rtt_{\N^2}$ are complex numbers that satisfy
       \be\label{eq:condition on coeff diamond}
       \sum_{(m_1,m_2)\in \N^2}\frac{|a_{m_1,m_2}|^2}{(m_1+m_2+2)}\frac{{\ltt 2m_1+1\rtt!}{\ltt 2m_2+1\rtt!}}{{\ltt 2m_1+2m_2+3\rtt!}}<\infty.
       \ee
       Moreover,
       \be\label{eq:norm eq diamond}
       \|f\|_{A^2\ltt\Om\rtt}^2\approx \sum_{(m_1,m_2)\in \N^2}\frac{|a_{m_1,m_2}|^2}{(m_1+m_2+2)}\frac{{\ltt 2m_1+1\rtt!}{\ltt 2m_2+1\rtt!}}{{\ltt 2m_1+2m_2+3\rtt!}}.
       \ee
       \item[(ii)]
       $F\in A^2\ltt\Om^*\rtt$ if and only if
    \be\label{eq:series exp polydisc}
    F(z_1,z_2)=\sum\limits_{(m_1,m_2)\in \N^2} t_{m_1m_2}z_1^{m_1}z_2^{m_2},\quad \ltt\text{in} \,L^2\ltt\Om^*\rtt\rtt,
    \ee
       where $\ltt t_{m_1,m_2}\rtt_{\N^2}$ are complex numbers that satisfy
       \be\label{eq:condition on coeff polydisc}
       \sum_{(m_1,m_2)\in \N^2}\frac{|t_{m_1,m_2}|^2}{(m_1+1)(m_2+1)}<\infty.
       \ee
        Moreover,
       \be\label{eq:norm eq polydisc}
       \|F\|_{A^2\ltt\Om^*\rtt}^2\approx  \sum_{(m_1,m_2)\in \N^2}\frac{|t_{m_1,m_2}|^2}{(m_1+1)(m_2+1)}.
       \ee
       \item[(iii)] Let $G\in \hol(\C^2)$ with the power series expansion
       \be\label{eq:series exp PW}
    G(z_1,z_2)=\sum\limits_{(m_1,m_2)\in \N^2} \ell_{m_1m_2}z_1^{m_1}z_2^{m_2},\quad \ltt \text{uniformly on compact subsets of } \C^2
    \rtt.
    \ee
    \begin{itemize}
        \item [(a)]
     $G\in A^2(\C^2,\omega_\Om)$ if and only if
       the complex numbers $\ltt \ell_{m_1,m_2}\rtt_{\N^2}$ satisfy
       \be\label{eq:condition on coeff PW}
       \sum_{(m_1,m_2)\in \N^2}{|\ell_{m_1,m_2}|^2}\frac{\Gamma(2m_1+2m_2+\frac{7}{2})}{2^{2m_1+2m_2}}<\infty.
       \ee
        Moreover, 
       \be\label{eq:norm eq PW}
       \|G\|_{A^2\ltt\C^2,\omega_\Om\rtt}^2\approx  \sum_{(m_1,m_2)\in \N^2}{|\ell_{m_1,m_2}|^2}\frac{\Gamma(2m_1+2m_2+\frac{7}{2})}{2^{2m_1+2m_2}}.
       \ee
      {
       \item[(b)] $G\in A^2(\C^n,\mu_\Om)$ if and only if the complex numbers $\ltt \ell_{m_1,m_2}\rtt_{\N^2}$ satisfy
       \be\label{eq:norm condition on co eff PW YL}
       \sum_{(m_1,m_2)\in \N^2}{|\ell_{m_1,m_2}|^2}\frac{(2m_1+2m_2+2)!}{2^{2m_1+2m_2}}<\infty.
       \ee
        Moreover, 
       \be\label{eq:norm eq PW YL}
       \|G\|_{A^2\ltt\C^2,\mu_\Om\rtt}^2\approx  \sum_{(m_1,m_2)\in \N^2}{|\ell_{m_1,m_2}|^2}\frac{(2m_1+2m_2+2)!}{2^{2m_1+2m_2}}.
       \ee
       }
       \end{itemize}
       \end{itemize}
       \end{lemma}
       \begin{proof} Let $f\in A^2(\Om)$. Due to the Reinhardtness of $\Om$, there exists a sequence $(a_{m_1,m_2})_{\N^2}$ such that
       \[
       p_k(\zeta_1,\zeta_2)\rightarrow f(\zeta_1,\zeta_2)\quad \text{in}\, L^2(\Om),
       \]
       where $p_k(\zeta_1,\zeta_2)=\sum\limits_{m_1,m_2=0}^k a_{m_1,m_2} \zeta_1^{m_1}\zeta_2^{m_2}$. Now
       \bea\nonumber
       \|p_k\|_{A^2(\Om)}^2&=&\int_\Om\left|{\sum\limits_{m_1,m_2=0}^k a_{m_1,m_2}} \zeta_1^{m_1}\zeta_2^{m_2}\right|^2 dV(\zeta_1,\zeta_2)\\
       \nonumber
       \text{(by \eqref{eq:param of diamond}, \eqref{eq:pullback of lbge})}&=& 4\pi^2 \sum\limits_{m_1,m_2=0}^k |a_{m_1,m_2}|^2\int_0^1r^{2m_1+2m_2+3}dr\int_0^1 s^{2m_1+1}(1-s)^{2m_2+1} ds\\
       \nonumber&\approx& \sum\limits_{m_1,m_2=0}^k \frac{|a_{m_1,m_2}|^2}{m_1+m_2+2}\frac{(2m_1+1)!(2m_2+1)!}{(2m_1+2m_2+3)!}
       \eea
       As $\lim\limits_{k\rightarrow\infty}\|p_k\|_{A^2(\Om)}^2=\|f\|_{A^2(\Om)}^2$, it follows that \eqref{eq:condition on coeff diamond}, and \eqref{eq:norm eq diamond} holds. 

       Conversely, if $(a_{m_1,m_2})_{\N^2}$ is a sequence of complex numbers satisfying \eqref{eq:condition on coeff diamond}, then the function $f$ defined as in \eqref{eq:series exp diamond} is in $L^2(\Om)$, and is approximable by holomorphic polynomials of the form  $p_k(\zeta_1,\zeta_2)=\sum\limits_{m_1,m_2=0}^ka_{m_1,m_2}\zeta_1^{m_1} \zeta_2^{m_2}$, $k\in\N$. Thus, $f\in A^2(\Om)$. This completes the proof of (i).

       Suppose $q_k$ is a polynomial of the form $q_k(z_1,z_2)=\sum\limits_{m_1,m_2=0}^kt_{m_1,m_2}z_1^{m_1} z_2^{m_2}$. Then 
       \bea\nonumber
       \|q_k\|_{A^2(\Om^*)}^2&=&\int_{\Om^*}\left|{\sum\limits_{m_1,m_2=0}^k t_{m_1,m_2}} z_1^{m_1}z_2^{m_2}\right|^2 dV(z_1,z_2)\\
       \nonumber
       &=& 4\pi^2 \sum\limits_{m_1,m_2=0}^k |t_{m_1,m_2}|^2\int_0^1r^{2m_1+1}dr\int_0^1 r^{2m_2+1} dr\\
      \nonumber &\approx& \sum\limits_{m_1,m_2=0}^k \frac{|t_{m_1,m_2}|^2}{(m_1+1)(m_1+1)}.
       \eea
Now, following the same argument as in (i), we get (ii).

From \cite[Theorem 1]{BS08}, it follows that the measure $(dd^c H_\Om)^2$ is supported on the real hypersurface $M=\{(z_1,z_2)\in\C^2:|z_1|=|z_2|\}$, and if we parameterize $M$ via $\vartheta_M
         \ltt r,\psi_1,\psi_2\rtt=\ltt re^{i\psi_1},re^{i\psi_2}\rtt$, we obtain that
        \be\label{eq:MA bi disc}
       \vartheta_M^* (dd^cH_\Om)^2\ltt r,\psi_1,\psi_2\rtt\approx dr d\psi_1 d\psi_2,\quad (r,\psi_1,\psi_2)\in(0,\infty)\times[0,2\pi)^2.
        \ee
        For an explicit computation, see \cite[Section~6.2(iv)]{AC24}. 
Next, if we take a polynomial of the form $\mathcal P_k(z_1,z_2)=\sum\limits_{m_1,m_2=0}^k\ell_{m_1,m_2}z_1^{m_1} z_2^{m_2}$. Then from \eqref{eq:MA bi disc}, it follows that
 \bea\nonumber
       \|\mathcal P_k\|_{A^2(\C^2,\omega_\Om)}^2&=&\int_{\C^2}\left|{\sum\limits_{m_1,m_2=0}^k \ell_{m_1,m_2}} z_1^{m_1}z_2^{m_2}\right|^2 e^{-2\max\{|z_1|,|z_2|\}}\|z\|^{\frac{5}{2}}(dd^cH_\Om)^2(z_1,z_2)\\
       \nonumber
       &\approx&  \sum\limits_{m_1,m_2=0}^k |\ell_{m_1,m_2}|^2\int_0^\infty r^{2m_1+2m_2+\frac{5}{2}}e^{-2r} dr\\
      \nonumber &\approx& \sum\limits_{m_1,m_2=0}^k \frac{|\ell_{m_1,m_2}|^2}{2^{2m_1+2m_2}}\Gamma\ltt 2m_1+2m_2+\frac{7}{2}\rtt.
       \eea
Now, following the same steps as in (i) and (ii), we obtain (iii)(a).
{
To show (iii)(b), we first show that
for any $z\in\C^n$, satisfying $H_\Om(z)>1$,
\[
\left\|e^{\langle\cdot, z\rangle}\right\|^2_{L^2(\Om)}\approx {e^{2H_\Om(z)}}\|z\|^{-2}.
\]
Parameterizing the set $\{z\in\C^2: H_\Om(z)>1\}$ as $\{(te^{i\psi_1},te^{i\psi_2}): t>1, \psi_1,\psi_2\in[0,2\pi]\}$ we get
\begin{align}\label{eq:main est counter}
\nonumber e^{-2H_\Om(z)}&\|z\|^2\left\|e^{\langle\cdot, z\rangle}\right\|^2_{L^2(\Om)}\\ 
\nonumber&\approx t^2\int_0^1\int_0^1\int_0^{2\pi}\int_0^{2\pi} e^{2tr\ltt s\cos(\theta_1-\psi_1)+(1-s)\cos(\theta_2-\psi_2)-1\rtt} e^{2t(r-1)}r^3s(1-s) d\theta_1 d\theta_2 ds dr\\
&=t^2\int_0^1\int_0^1\int_0^{2\pi}\int_0^{2\pi} e^{2tr\ltt s\cos\theta_1+(1-s)\cos\theta_2-1\rtt} e^{2t(r-1)}r^3s(1-s) d\theta_1 d\theta_2 ds dr.
\end{align}
For $t>1$ and $r>0$, let us denote
\begin{align*}
Q(r,t)&=rt\int_0^1\int_0^{2\pi}\int_0^{2\pi} e^{2rt\ltt s\cos\theta_1+(1-s)\cos\theta_2-1\rtt} s(1-s) d\theta_1 d\theta_2 ds\\
&= 
4\pi^2 e^{-2rt}rt\int_0^1 I_0(2rts) I_0(2rt(1-s)) s(1-s) ds, 
\end{align*}
where $I_0$ is the modified Bessel function of the first kind of order $0$, given by
\[
I_0(x)=\frac{1}{2\pi}\int_0^{2\pi} e^{{x} \cos\theta} d\theta,\quad \text{for } x\in\R.
\]
By \cite[9.7.1]{AB48}, as $x\rightarrow\infty$,
\[
I_0(x) \frac{(1+x)^{\frac{1}{2}}}{e^x}\rightarrow 1.
\]
This implies that for $x\geq 0$, 
\[
I_0(x)\approx  \frac{e^x}{(1+x)^{\frac{1}{2}}}.
\]
Using this estimate on $Q(r,t)$, we obtain that for $t>1$ and $r>0$,
\beas
Q(r,t)&\approx rt\int_0^1 (1+2rts)^{-\frac{1}{2}}(1+2rt(1-s))^{-\frac{1}{2}}  s(1-s) ds\\
&= rt\int_0^1 (1+2rt+(2rt)^2 s(1-s))^{-\frac{1}{2}}  s(1-s) ds.
\eeas
From this expression, for $t>1$ and $r>0$ such that $rt\leq 1$,
\beas
Q(r,t)\approx rt \int_0^1 s(1-s) ds\approx rt.
\eeas
Furthermore, for $t>1$ and $r>0$ such that $rt\geq 1$,
\begin{align*}
\nonumber Q(r,t)&\approx \int_0^1  \ltt s(1-s)+\frac{1}{rt}\rtt^{-\frac{1}{2}}  s(1-s) ds\\
&\approx \int_0^1 s^{\frac{1}{2}}(1-s)^{\frac{1}{2}} ds\approx 1.
\end{align*}
Consequently, as $t\rightarrow\infty$,
 \begin{align}\label{eq:Qt 1}
\nonumber t\int_0^{\frac{1}{t}} e^{2t(r-1)}Q(r,t) r^2 dr&\leq t^2 e^{2-2t}\int_0^{\frac{1}{t}}r^3 dr\\
&= \frac{ e^{2-2t}}{4t^2}\rightarrow 0.
 \end{align}
 Furthermore, by applying a change of variable $u=t(1-r)$, and applying the dominated convergence theorem, as $t\rightarrow \infty$,
 \begin{align}\label{eq:Qt 2}
    \nonumber t \int_{\frac{1}{t}}^1 e^{2t(r-1)}Q(r,t) r^2 dr&= t \int_{\frac{1}{t}}^1 e^{2t(r-1)} r^2 dr\\
     &=\int_0^{t-1} e^{-2u} \ltt1-\frac{u}{t}\rtt^2 du \rightarrow \int_0^\infty e^{-2u} du=\frac{1}{2}.
 \end{align}
 Combining \eqref{eq:Qt 1} and \eqref{eq:Qt 2}, we get that as $t\rightarrow\infty$,
 \bea\label{eq:Qt final}
 t\int_0^1 e^{2t(r-1)}Q(r,t) r^2 dr\rightarrow \frac{1}{2}.
 \eea
 Finally by \eqref{eq:main est counter} and \eqref{eq:Qt final}, for $t>1$, 
 \beas
 e^{-2H_\Om(z)}\|z\|^2\left\|e^{\langle\cdot, z\rangle}\right\|^2_{L^2(\Om)}\approx t\int_0^1 e^{2t(r-1)}Q(r,t) r^2 dr\approx 1.
 \eeas
 This completes the proof of our claim.

Combining this with \eqref{eq:MA bi disc}, we get
\bea\nonumber
       \|\mathcal P_k\|_{A^2(\C^2,\mu_\Om)}^2&\approx&\int_{\C^2}\left|{\sum\limits_{m_1,m_2=0}^k \ell_{m_1,m_2}} z_1^{m_1}z_2^{m_2}\right|^2 e^{-2\max\{|z_1|,|z_2|\}}\|z\|^{2}(dd^cH_\Om)^2(z_1,z_2)\\
       \nonumber
       &\approx&  \sum\limits_{m_1,m_2=0}^k |\ell_{m_1,m_2}|^2\int_0^\infty r^{2m_1+2m_2+2}e^{-2r} dr\\
      \nonumber &\approx& \sum\limits_{m_1,m_2=0}^k \frac{|\ell_{m_1,m_2}|^2}{2^{2m_1+2m_2}}(2m_1+2m_2+2)!.
       \eea
 Now, following the same steps as in (i) and (ii), we obtain (iii)(b).}
\end{proof}

      Next, we expand the Fantappi{\`e} and Laplace transforms of $A^2\ltt\Om\rtt$-functions in terms of power series. 
      \begin{lemma}\label{le:Fan series}
          Let $\Om$ be as above, and $f\in A^2\ltt\Om\rtt$, which admits the expansion in \eqref{eq:series exp diamond}. 
          \begin{itemize}
              \item[(i)] 
          The Fantappi{\`e} transform of $f$ is given by,
      \bea\label{eq:Fan expand}
      \F_3\ltt f\rtt(z)=\sum\limits_{(m_1,m_2)\in\N^2} t_{m_1,m_2} z_1^{m_1}z_2^{m_2},\quad  z=(z_1,z_2)\in\Om^*,
      \eea
      where 
      \bea\label{eq:Fan coeff}
      t_{m_1,m_2}=2\overline{a_{m_1,m_2}}\frac{\ltt m_1+m_2+1 \rtt!}{m_1! m_2!}\dfrac{(2m_1+1)!(2m_2+1)!}{(2m_1+2m_2+3)!}.
      \eea
      \item [(ii)]
      The Laplace transform of $f$ is given by,
      \bea\label{eq:Lap expand}
      \mathcal L\ltt f\rtt(z)=\sum\limits_{(m_1,m_2)\in\N^2} \ell_{m_1,m_2} z_1^{m_1}z_2^{m_2},\quad  z=(z_1,z_2)\in\C^n,
      \eea
      where 
      \bea\label{eq:Lap coeff}
      \ell_{m_1,m_2}=\pi^2\frac{\overline{a_{m_1,m_2}}}{m_1! m_2!}\dfrac{(2m_1+1)!(2m_2+1)!}{(m_1+m_2+2)(2m_1+2m_2+3)!}.
      \eea

      \end{itemize}
      \end{lemma}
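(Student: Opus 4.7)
The plan is to reduce both parts of the lemma to the computation of $\F_3$ and $\mathcal L$ on a single monomial $\zeta_1^{m_1}\zeta_2^{m_2}$, and then glue the monomial-level results together via the orthogonal expansion $f=\sum a_{m_1,m_2}\zeta_1^{m_1}\zeta_2^{m_2}$ from Lemma~\ref{le:power series}(i). For any fixed $z\in\Om^*$, the kernel $(1-\langle\cdot,z\rangle)^{-3}$ is uniformly bounded on $\overline\Om$, since $|\langle\zeta,z\rangle|\leq\max(|z_1|,|z_2|)(|\zeta_1|+|\zeta_2|)<1$ on $\overline\Om\times\Om^*$; and for any $z\in\C^2$, $e^{\langle\cdot,z\rangle}$ is bounded on the compact set $\overline\Om$. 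Hence the maps $g\mapsto\F_3(g)(z)$ and $g\mapsto\mathcal L(g)(z)$ are bounded conjugate-linear functionals on $L^2(\Om)$, and I may apply them term-by-term to the $L^2$-convergent expansion of $f$ to reduce the proof to the monomial case.

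Next, I would expand each kernel in $\zeta$. For the Fantappi{\`e} kernel, the multinomial form of the binomial series gives
\bes
(1-\langle\zeta,z\rangle)^{-3}=\sum_{m_1,m_2\geq 0}\binom{m_1+m_2+2}{2}\binom{m_1+m_2}{m_1}\zeta_1^{m_1}\zeta_2^{m_2}z_1^{m_1}z_2^{m_2},
\ees
the series converging absolutely and uniformly for $\zeta\in\overline\Om$ at each fixed $z\in\Om^*$. For the Laplace kernel, the Taylor expansion $e^{\langle\zeta,z\rangle}=\sum_{m_1,m_2}\frac{\zeta_1^{m_1}\zeta_2^{m_2}z_1^{m_1}z_2^{m_2}}{m_1!m_2!}$ converges uniformly on $\overline\Om$ for every $z\in\C^2$. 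Inserting either expansion under the relevant integral and integrating term-by-term, the $(\theta_1,\theta_2)$-integration in the parametrization \eqref{eq:param of diamond} annihilates every off-diagonal contribution $\int_\Om\overline{\zeta_1^{m_1}\zeta_2^{m_2}}\zeta_1^{n_1}\zeta_2^{n_2}\,dV$ with $(n_1,n_2)\neq(m_1,m_2)$. Only the diagonal moment
\bes
I_{m_1,m_2}=\int_\Om |\zeta_1|^{2m_1}|\zeta_2|^{2m_2}\,dV
\ees
survives. Using \eqref{eq:param of diamond}--\eqref{eq:pullback of lbge}, $I_{m_1,m_2}$ factors into a simple power integral in $r$ and a Beta integral in $s$, which I would evaluate in closed form using $\int_0^1 s^{2m_1+1}(1-s)^{2m_2+1}ds=\frac{(2m_1+1)!(2m_2+1)!}{(2m_1+2m_2+3)!}$.

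Putting the pieces together, $\F_3(\zeta_1^{m_1}\zeta_2^{m_2})(z)$ becomes the product of $\frac{2}{\pi^2}$, the kernel coefficient $\frac{(m_1+m_2+2)!}{2\,m_1!\,m_2!}$, and $I_{m_1,m_2}$, all multiplied by $z_1^{m_1}z_2^{m_2}$; similarly, $\mathcal L(\zeta_1^{m_1}\zeta_2^{m_2})(z)=\frac{1}{m_1!m_2!}I_{m_1,m_2}\,z_1^{m_1}z_2^{m_2}$. Multiplying by $\overline{a_{m_1,m_2}}$ (by conjugate-linearity) and summing then yield the claimed expansions \eqref{eq:Fan expand}, \eqref{eq:Lap expand} with coefficients \eqref{eq:Fan coeff}, \eqref{eq:Lap coeff}. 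The only real work is bookkeeping of the factorials --- combining $(m_1+m_2+2)!/(2\,m_1!m_2!)$ with the $(m_1+m_2+2)^{-1}$ arising from the $r$-integration to collapse into the $(m_1+m_2+1)!/m_1!m_2!$ pattern in \eqref{eq:Fan coeff}, and a parallel cancellation giving \eqref{eq:Lap coeff}. There is no genuine analytic obstacle once the term-by-term integration is justified by the uniform boundedness of the kernels on $\overline\Om$.
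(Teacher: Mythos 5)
Your proposal follows essentially the same route as the paper's proof: reduce to polynomials by observing that point evaluations of $\F_3$ and $\mathcal L$ are bounded conjugate-linear functionals on $L^2(\Om)$, expand each kernel in a power series in $\zeta$ that converges uniformly on $\overline\Om$ for fixed $z$, integrate term-by-term so that the angular variables kill off-diagonal moments, and then evaluate the surviving diagonal moment via the $(r,s,\theta_1,\theta_2)$-parametrization. The only genuine (and minor) deviation is your justification of $\left|\left\langle\zeta,z\right\rangle\right|<1$: you prove it by the direct estimate $\left|\left\langle\zeta,z\right\rangle\right|\leq\max\left(|z_1|,|z_2|\right)\left(|\zeta_1|+|\zeta_2|\right)$, whereas the paper argues by contradiction from the Reinhardtness of $\Om$ and the definition of $\Om^*$; your version is arguably cleaner but both are trivial. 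One small caution on bookkeeping: if you substitute the pullback formula \eqref{eq:pullback of lbge} literally as printed (with the factor $\tfrac12$), your $I_{m_1,m_2}$ will come out a factor of $2$ too small --- the actual pullback of Lebesgue measure under \eqref{eq:param of diamond} is $r^3s(1-s)\,dr\,ds\,d\theta_1\,d\theta_2$ (check: this gives $\operatorname{Vol}(\Om)=\pi^2/6$, whereas the $\tfrac12$ would give $\pi^2/12$). Since you carry the correct binomial coefficient $\binom{m_1+m_2+2}{2}\binom{m_1+m_2}{m_1}=\frac{(m_1+m_2+2)!}{2\,m_1!m_2!}$, the corrected moment is what you need to land exactly on the factor of $2$ in \eqref{eq:Fan coeff}; with the uncorrected moment your arithmetic would be off by two. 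None of this affects the rest of the paper, since the subsequent estimates are only up to $\approx$.
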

      \begin{proof}
        Let $z\in\Om^*$. By the Cauchy--Schwarz inequality, there exists a $C_z>0$ such that,
        \beas
        \left|\F_3\ltt f\rtt(z)\right|\leq C_z \|f\|_{A_2\ltt\Om\rtt},
        \eeas
        and
        \beas
         \left|\mathcal L\ltt f\rtt(z)\right|\leq C_z \|f\|_{A_2\ltt\Om\rtt},\quad f\in A^2(\Om)
        \eeas
        Thus, it suffices to establish the claim for polynomials of the form 
    $
    p_k\ltt z_1,z_2\rtt=\sum\limits_{m_1,m_2=0}^k a_{m_1,m_2}z_1^{m_1}\,z_2^{m_2}.
    $
    We first begin by showing that, $\left|\left\langle\zeta,z\right\rangle\right|< 1,\,\forall\zeta\in\overline{\Om}$. To see this, if there exists $\widetilde\zeta\in\overline{\Om}$ such that $\left|\left\langle\widetilde \zeta,z\right\rangle\right|=1$. Since $\Om$ is Reinhardt, $\frac{\widetilde{\zeta}}{\left\langle\widetilde \zeta,z\right\rangle}\in\overline \Om$, and $\left\langle\frac{\widetilde{\zeta}}{\left\langle\widetilde \zeta,z\right\rangle},z\right\rangle=1$, which is not possible since, $z\in \Om^*$, which proves our claim. Thus, using the series  of $\ltt1-\left\langle\zeta,z\right\rangle\rtt^{-3}$, and \eqref{eq:series exp diamond}, we get that for each $z=(z_1,z_2)\in\Om^*$,
           \beas
      \mathcal F_3\ltt p_k\rtt(z)&=&\frac{2}{\pi^2}\sum_{(k_1,k_2)\in\N^2} \dfrac{\ltt k_1+k_2+2\rtt!}{k_1!k_2!} \ltt \int_\Om \overline {f\ltt\zeta\rtt} \zeta_1^{k_1}\zeta_2^{k_2} dV(\zeta)\rtt z_1^{k_1} z_2^{k_2}\\ 
      &=&\frac{2}{\pi^2}\sum_{(k_1,k_2)\in\N^2}\sum\limits_{m_1,m_2=0}^k \dfrac{\ltt k_1+k_2+2\rtt!}{k_1!k_2!} \ltt \overline{a_{m_1,m_2}}\int_\Om \overline{\zeta_1^{m_1}}\overline{\zeta_2^{m_2}}\zeta_1^{k_1}\zeta_2^{k_2} dV(\zeta)\rtt z_1^{k_1} z_2^{k_2}.
      \eeas
      Similarly for $\mathcal L$, we get that for each $z=(z_1,z_2)\in\C^2$,
      \beas
      \mathcal L(p_k)(z)&=&\sum\limits_{(k_1,k_2)\in\N^2} \ltt \int_\Om \overline {f\ltt\zeta\rtt} \zeta_1^{k_1}\zeta_2^{k_2} dV(\zeta)\rtt\frac{z_1^{k_1}z_2^{k_2}}{k_1!k_2!}\\
      &=&\sum\limits_{(k_1,k_2)\in\N^2}\ltt \overline{a_{m_1,m_2}}\int_\Om \overline{\zeta_1^{m_1}}\overline{\zeta_2^{m_2}}\zeta_1^{k_1}\zeta_2^{k_2} dV(\zeta) \rtt\frac{z_1^{k_1}z_2^{k_2}}{k_1!k_2!}. 
      \eeas
     {Finally, by combining the orthogonality of monomials (due to Reinhardtness) with the computation in the proof establishing \eqref{eq:condition on coeff diamond}, we deduce that}
      \beas
      \int_{\Om}\overline{\zeta_1^{m_1}}\overline{\zeta_2^{m_2}}\zeta_1^{k_1}\zeta_2^{k_2} dV(\zeta)=\begin{cases}
         \frac{2\pi^2}{\ltt 2m_1+2m_2+4\rtt}\frac{{\ltt 2m_1+1\rtt!}{\ltt 2m_2+1\rtt!}}{{\ltt 2m_1+2m_2+3\rtt!}},&\quad \text{for}\,(k_1,k_2)=(m_1,m_2),\\
         0,&\quad \text{otherwise}.
      \end{cases}
      \eeas
      Replacing this in the above equation, the claim follows for $p_k$, and hence for $f\in A^2\ltt\Om\rtt$.
       \end{proof}

       Let $f\in A^2\ltt\Om\rtt$, with power series as in \eqref{eq:series exp diamond}.  Due to Lemma~\ref{le:Fan series}, we get that for $z\in\Om^*$, $\F_3\ltt f\rtt(z)=\sum\limits_{(m_1,m_2)\in\N^2}t_{m_1,m_2} z_1^{m_1}z_2^{m_2}$, where
      $t_{m_1,m_2}$ is as in \eqref{eq:Fan coeff}.
    Now, using \eqref{eq:norm eq diamond}, \eqref{eq:norm eq polydisc}, and computations we get that
    \beas
    \|\F_3\ltt f\rtt\|_{A^2\ltt\Om^*\rtt}^2 &\approx&
\sum_{(m_1,m_2)\in \N^2}\frac{|t_{m_1,m_2}|^2}{(2m_1+2)(2m_2+2)}\\
&\approx&\sum_{(m_1,m_2)\in \N^2}\frac{\left|a_{m_1,m_2}\right|^2}{\ltt 2m_1+1\rtt\ltt 2m_2+1\rtt}\frac{{\ltt m_1+m_2+1\rtt!}^2}{{m_1!}^2 {m_2!}^2}\frac{{\ltt 2m_1+1\rtt!}^2{\ltt 2m_2+1\rtt!}^2}{{\ltt 2m_1+2m_2+3\rtt!}^2}
\\
&\lesssim& \sum_{(m_1,m_2)\in \N^2}\frac{|a_{m_1,m_2}|^2}{(m_1+m_2+2)}\frac{{\ltt 2m_1+1\rtt!}{\ltt 2m_2+1\rtt!}}{{\ltt 2m_1+2m_2+3\rtt!}}\\
&\lesssim& \|f\|_{A^2\ltt\Om\rtt}^2,
  \eeas
 where the second last inequality follows from Stirling's approximation applied as follows:
  \beas
\frac{m_1+m_2+2}{\ltt 2m_1+1\rtt\ltt 2m_2+1\rtt}\frac{{\ltt m_1+m_2+1\rtt!}^2}{{m_1!}^2 {m_2!}^2}\frac{{\ltt 2m_1+1\rtt!}{\ltt 2m_2+1\rtt!}}{{\ltt 2m_1+2m_2+3\rtt!}}\approx{m_1^{-\frac{1}{2}}+m_2^{-\frac{1}{2}}} \lesssim 1.
\eeas
This shows that $\F_3$ is a $L^2$-bounded operator from $A^2\ltt\Om\rtt$ to $A^2\ltt\Om^*\rtt$.

A similar computation yields that
\beas
\|\mathcal L (f)\|_{A^2(\C^2,\omega_\Om)}^2&\approx&\sum\limits_{(m_1,m_2)\in\N^2}\frac{|\ell_{m_1,m_2}|^2}{2^{2m_1+2m_2}}{\Gamma\ltt2m_1+2m_2+\frac{7}{2}\rtt}\\
&\approx&\sum\limits_{(m_1,m_2)\in\N^2}{|a_{m_1,m_2}|^2}\dfrac{\Gamma\ltt2m_1+2m_2+\frac{7}{2}\rtt (2m_1+1)!^2(2m_2+1)!^2}{2^{2m_1+2m_2}(m_1+m_2+2)^2m_1!^2m_2!^2(2m_1+2m_2+3)!^2}\\
&\lesssim& \sum_{(m_1,m_2)\in \N^2}\frac{|a_{m_1,m_2}|^2}{(m_1+m_2+2)}\frac{{\ltt 2m_1+1\rtt!}{\ltt 2m_2+1\rtt!}}{{\ltt 2m_1+2m_2+3\rtt!}}\\
&\lesssim& \|f\|_{A^2\ltt\Om\rtt}^2, 
\eeas
where the second last inequality follows from Stirling's approximation applied as follows.
\beas
\dfrac{\Gamma\ltt2m_1+2m_2+\frac{7}{2}\rtt (2m_1+1)!(2m_2+1)!}{2^{2m_1+2m_2}(m_1+m_2+2)m_1!^2m_2!^2(2m_1+2m_2+3)!}&\approx& \frac{m_1^{\frac{1}{2}} m_2^{\frac{1}{2}}}{(m_1+m_2)^{\frac{3}{2}}}\\
\text{(by AM-GM)} &\lesssim& \frac{1}{(m_1+m_2)^{\frac{1}{2}}}\lesssim 1.
\eeas
This shows $\mathcal L$ is $L^2$-bounded operator form $A^2(\Om)$ to $A^2(
\C^2,\omega_\Om)$.

  Next, to show that $\F_3$ is not an onto map, consider $F\ltt z_1,z_2\rtt=\sum\limits_{(m_1,m_2)\in \N^2} t_{m_1,m_2} z_1^{m_1} z_2^{m_2}$, where
 \begin{align*}
      t_{m_1,m_2}=\begin{cases}
         (2k+1)^{\frac{1}{4}},& \text{when } m_1=m_2=k>0,\\
          0,&\text{otherwise}.
      \end{cases}  
    \end{align*}
    Then, by \eqref{eq:norm eq polydisc}, it follows that $\|F\|_{A^2\ltt\Om^*\rtt}^2\approx \sum\limits_{k=1}^\infty \frac{1}{k^{\frac{3}{2}}}<\infty$. Hence, $F\in A^2\ltt\Om^*\rtt$. If there exists a $f\in A^2\ltt\Om\rtt$, such that $\F_3(f)=F$. Then, $f$ admits an expansion, as in \eqref{eq:series exp diamond} for some sequence $\ltt a_{m_1,m_2}\rtt_{\N^2}$ satisfying \eqref{eq:condition on coeff diamond}. Furthermore, from \eqref{eq:Fan coeff} it follows that
    \begin{align*}
      a_{m_1,m_2}=\begin{cases}
        \frac{(2k+1)^{\frac{1}{4}}k!^2(4k+3)!}{2(2k+1)!^3},& \text{when } m_1=m_2=k>0,\\
          0,&\text{otherwise}.
      \end{cases}  
    \end{align*}
    Then, by Stirling's approximation, we get that
    \beas
    \|f\|_{A^2\ltt\Om\rtt}^2&\approx& \sum_{k=1}^\infty \frac{(2k+1)^{\frac{1}{2}}k!^4(4k+3)!}{(2k+1)!^4(2k+2)}\\
&\approx& \sum_{k=1}^\infty \frac{1}{k}=\infty.    
    \eeas
    This is a contradiction, since $f\in A^2\ltt\Om\rtt$, which proves that $\F_3$ is not onto from $A^2\ltt\Om\rtt$ to $A^2\ltt\Om^*\rtt$.
    
    Finally, to show that $\mathcal L$ is not onto, consider $G(z_1,z_2)=\sum\limits_{(m_1,m_2)\in\N^2}\ell_{m_1,m_2}z_1^{m_1}z_2^{m_2}$, where
    \beas
    \ell_{m_1,m_2}=\begin{cases}
        \frac{2^{2k}}{k^{\frac{3}{4}} \Gamma\ltt 4k+\frac{7}{2}\rtt^{\frac{1}{2}}},& \text{when } m_1=m_2=k>0,\\
          0,&\text{otherwise}.
      \end{cases} 
    \eeas
    Then $G\in\hol(\C^2)$, and by \eqref{eq:norm eq PW}, it follows that
    $\|G\|_{A^2(\C^2,\omega_\Om)}^2\approx \sum\limits_{k=1}^\infty \frac{1}{k^\frac{3}{2}}<\infty.$ Hence, $G\in A^2(\C^2,\omega_\Om)$. If there exists a $f\in A^2(\Om)$, such that $\mathcal L(f)=G$, then by the relation \eqref{eq:Lap coeff} we have that
\beas
 a_{m_1,m_2}=\begin{cases}
        \frac{2^{2k} k!^2(2k+2)(4k+3)!}{(2k+1)!^2k^{\frac{3}{4}} \Gamma\ltt 4k+\frac{7}{2}\rtt^{\frac{1}{2}}},& \text{when } m_1=m_2=k>0,\\
          0,&\text{otherwise}.
      \end{cases} 
\eeas
Now, by \eqref{eq:condition on coeff diamond}, and Stirling's approximation, it follows that
\beas
\|f\|^2_{A^2(\Om)}&\approx&\sum\limits_{k=1}^\infty \frac{2^{4k} k!^4(2k+2)(4k+3)!}{(2k+1)!^2k^{\frac{3}{2}} \Gamma\ltt 4k+\frac{7}{2}\rtt}\\
&\approx& \sum\limits_{k=1}^\infty \frac{1}{k}=\infty.
\eeas
This is a contradiction, since $f\in A^2(\Om)$.

{
As for every $(m_1,m_2)\in \N^2$, $(2m_1+2m_2+2)!\leq \Gamma\ltt2m_1+2m_2+\frac{7}{2}\rtt $, by \eqref{eq:condition on coeff PW}, and \eqref{eq:norm condition on co eff PW YL} it follows that
\[
A^2(\C^2,\omega_\Om)\subseteq A^2(\C^2,\mu_\Om).
\]
Finally, to show that the inclusion is strict, let us consider $\widetilde{G}(z_1,z_2)=\sum\limits_{(m_1,m_2)\in\N^2}\widetilde{\ell}_{m_1,m_2}z_1^{m_1}z_2^{m_2}$, where
    \beas
    \widetilde{\ell}_{m_1,m_2}=\begin{cases}
        \frac{2^{2k}}{k^{\frac{3}{4}} (4k+2)!^{\frac{1}{2}}},& \text{when } m_1=m_2=k>0,\\
          0,&\text{otherwise}.
      \end{cases} 
    \eeas
    Then $G\in\hol(\C^2)$, and by \eqref{eq:norm eq PW}, it follows that
    $\|\widetilde{G}\|_{A^2(\C^2,\mu_\Om)}^2\approx \sum\limits_{k=1}^\infty \frac{1}{k^\frac{3}{2}}<\infty.$ Hence, $\widetilde{G}\in A^2(\C^2,\mu_\Om)$.
    However, by Stirling's approximation, it follows that
  \begin{align*}
      \|\widetilde{G}\|_{A^2(\C^2,\omega_\Om)}^2\approx \sum\limits_{k=1}^\infty \frac{1}{k}=\infty.
  \end{align*}
  Hence, $\widetilde{G}\notin A^2(\C^2,\omega_\Om)$, which completes the proof of the theorem.}
  \end{section}
  %% Section remarks
 \begin{section}{Further remarks}
 In this section, we expand on the scope of the geometric conditions presented in our theorems. We begin by giving an example of a class of bounded $\cont^2$-smooth strongly convex domains, whose dual complement may not be strongly convex.

 Let us consider the following class of real ellipsoids in $\C^n$.
\bea\label{eq:counter nD}
\Om_n=\left\{z=(z_1,z_2,\cdots,z_n)\in\C^n: \sum_{j=1}^n a_j(\operatorname{Re}z_j)^2 +\sum_{j=1}^n b_j(\operatorname{Im}z_j)^2<1\right\},
\eea
where $a_j$ and $b_j$ are positive constants such that there exists at least one index $j\in\{1,2,\cdots,n\}$, with $a_j\neq b_j$.
 \begin{lemma}\label{le:counter ellip}
 Let $\Om_n$ be a real ellipsoid as defined in \eqref{eq:counter nD}, where $a_j \neq b_j$, for some $j \in \{1, 2, \dots, n\}$. Then its dual complement $\Om_n^*$ is not strongly convex when $a_j = 2b_j$ or $b_j = 2a_j$, and it fails to be even convex when $\max\left\{\frac{a_j}{b_j}, \frac{b_j}{a_j}\right\}>2$.  
 \end{lemma}
 \begin{proof}
When $n=1$, by the definition of the dual complement it follows that
 $\zeta\in \Om_1^*\setminus\{0\}$ if and only if $\zeta^{-1}\in \overline{\Om_1}$. Thus, applying  the transformation $\zeta\rightarrow \zeta^{-1}$ on $\overline{\Om_1}\setminus\{0\}$, we can verify that
\[
\Om_1^*=\{z\in\C: ((\rea z)^2+(\Ima z)^2)^2< a_1(\rea z)^2+b_1(\Ima z)^2\}.
\]
    It is straightforward to see that $\Om_1^*$ is a bounded $\cont^2$-smooth domain. By analyzing the hessian of the defining function we get that $\Om_1^*$ is convex but not strongly convex when $2a_1=b_1$ or $2b_1=a_1$, and is not convex when $\max\{\frac{a_1}{b_1},\frac{b_1}{a_1}\}> 2$.

When $n\geq2$, it is difficult to explicitly compute the dual complement $\Om_n^*$. However, we can determine the failure of convexity or strong convexity by analyzing its linear slices. Without loss of generality, let us assume $a_1\neq b_1$ and consider the complex line $\ell_{z_1} =\{ (z_1, 0, \dots, 0) : z_1 \in \C\}$. The slice $\Om_n^*\cap \ell_{z_1}$ is a planar domain that is identical to $\Om_1^*$. To see this, note that a point $w = (w_1, 0, \dots, 0)$ belongs to $\Om_n^*$ if and only if $w_1 z_1 \neq 1$ for all $z \in \Om_n$. If $w \in \Om_n^*$ and $z_1 \in \Om_1$, then the point $(z_1, 0, \dots, 0)$ lies in $\Om_n$. Therefore, $w_1 z_1 \neq 1$ for any $z_1 \in \Om_1$, which implies $w_1 \in \Om_1^*$. Conversely, suppose $w_1 \in \Om_1^*$ and $z = (z_1, z_2,\dots,z_n) \in\Om_n$. Then $z_1\in\Om_1$, which guarantees that the pairing $\sum_{j=1}^n w_j z_j = w_1 z_1 \neq 1$. This implies $(w_1, 0, \dots, 0) \in \Om_n^* \cap \ell_{z_1}$.
Finally, based on the convexity properties of $\Om_1^*$, it follows that the slice $\Om_n^* \cap \ell_{z_1}$, and consequently $\Om_n^*$ is not strongly convex when $a_1 = 2b_1$ or $b_1 = 2a_1$, and fails to be convex when $\max\left\{\frac{a_1}{b_1}, \frac{b_1}{a_1}\right\}>2$, which completes the proof.
 \end{proof}
\begin{remark}\label{rk:thm 1 2}
As mentioned in the introduction, we are unaware of any general conditions on bounded $\cont^2$-smooth strongly $\C$-convex domains such that their dual complements are strongly convex. However, we can construct a class of domains satisfying this property. Let $E$ be a bounded $\cont^2$-smooth strongly convex domain containing the origin, and $\Om= E^*$. Then, $\Om$ is a strongly $\C$-convex domain, and due to $\C$-convexity, $\Om^*= E$, which is strongly convex. This shows that the hypothesis of Theorem~\ref{th:main1} is satisfied for the dual complement of bounded $\cont^2$-smooth strongly convex domains containing the origin. For example, $\Om_n^*$, where $\Om_n$ is as defined in \eqref{eq:counter nD}, satisfies the hypothesis of Theorem~\ref{th:main1}.
\end{remark}
\begin{remark}\label{rk:ques L^p bdd}
    The hypothesis in Theorem~\ref{th:main1} of $\Om^*$ being strongly convex is used only to show that $\mathcal F_{n+1}$ is a $L^p$-bounded operator between $A^p(\Om)$ and $A^p(\Om^*)$, which essentially follows from the fact that when $D$ is a bounded $\cont^2$-smooth strongly convex domain containing the origin, the operator $|\mathcal B_{D}|$, as given in Lemma~\ref{le:bdd of LS ker} is bounded on $L^p(D)$. Consequently, the requirement of $\Om^*$ being strongly convex can be relaxed if one can answer the following question.
    
\noindent\textbf{Question.} Let $D\subset\C^n$ be a bounded $\cont^2$-smooth strongly $\C$-convex domain containing the origin, and $|\mathcal B_D|$ is the operator as given in Lemma~\ref{le:bdd of LS ker}. Is $|\mathcal {B} _ {D}|$ a $L^p(D)$-bounded operator?
\end{remark}
\begin{remark}\label{rk:C^1 smooth}
   Note that the domain considered in Theorem~\ref{th:counter}  is a bounded convex Reinhardt domain which lacks $\cont^1$-smoothness, which raises the following question.
\newline
\noindent\textbf{Question.} Does there exist a bounded $\cont^1$-smooth convex domain $\Om\subset \C^n$ such that $\mathcal F_{n+1}$ is not a normed space isomorphism from $A^2(\Om)$ and $A^2(\Om^*)$, and $\mathcal L$ is not a normed space isomorphism from $A^2(\Om)$ to both $A^2(\C^n,\omega_\Om)$ and $A^2(\C^n,\mu_\Om)$?
\end{remark}
 \end{section}
\bibliography{laplaceleray}{}

@book{APS,
  title={{C}omplex {C}onvexity and {A}nalytic {F}unctionals},
  author={Andersson, M. and Passare, M. and Sigurdsson, R.},
  volume={225},
  year={2004},
  publisher={Springer Science \& Business Media},
note={\url{https://doi.org/10.1007/978-3-0348-7871-5}},
}

@book{AB48,
  title={{H}andbook of mathematical functions with formulas, graphs, and mathematical tables},
  author={Abramowitz, M. and Stegun, I. A.},
  volume={55},
  year={1964},
  publisher={US Government printing office}
}

@article{AC24,
  title={{T}HE {L}APLACE AND {L}ERAY TRANSFORMS ON SOME (WEAKLY) CONVEX
DOMAINS IN {$\mathbb C^2$}},
  author={Chatterjee,A.},
  journal={J. Fourier Anal. Appl.},
volume={31},
number={36},
year={2025},
note={\url{https://doi.org/10.1007/s00041-025-10170-x}}
}

@article{AaKl13,
  title={{M}utual dualities between ${A}^{-\infty}({\Omega})$ and ${A}^{\infty}(\tilde{\Omega})$ for lineally convex domains},
  author={Abanin, A.V. and Khoi, L.H.},
  journal={Complex Var. Elliptic Equ.},
  volume={58},
  number={11},
  pages={1615--1632},
  year={2013},
  publisher={Taylor \& Francis},
note={\url{https://doi.org/10.1080/17476933.2012.699963}}
}

@article{Ab66,
  title={{T}he general form of a continuous linear functional on the space of functions holomorphic in a convex region of {$\mathbb C^n$}},
  author={Aizenberg, L.A.},
  journal={Dokl. Akad. Nauk},
  volume={166},
  number={5},
  pages={1015--1018},
  year={1966},
  organization={Russian Academy of Sciences}
}

@article{BS08,
 ISSN = {00029939, 10886826},
 URL = {http://www.jstor.org/stable/20535064},
 author = {Bedford, E. and Ma'u, S.},
 journal = {Proc. Amer. Math. Soc.},
 number = {1},
 pages = {95--101},
 publisher = {American Mathematical Society},
 title = {{C}omplex {M}onge-{A}mpère of a {M}aximum},
 urldate = {2024-03-29},
 volume = {136},
 year = {2008},
note={\url{https://www.jstor.org/stable/20535064}}
}

@article{CEM19,
  title={{D}uality and approximation of {B}ergman spaces},
  author={Chakrabarti, D. and Edholm, L. D. and McNeal, J.D.},
  journal={Adv. Math.},
  volume={341},
  pages={616--656},
  year={2019},
  publisher={Elsevier},
note={\url{https://doi.org/10.1016/j.aim.2018.10.041}}
}

@article{GsHg78,
  title={{I}ntegral geometry for-$\overline\partial$- cohomology in $q$-linear concave domains in {$\mathbb C\mathbb P^n$}},
  author={Gindikin, S.G. and Khenkin, G.M.},
  journal={Funct. Anal. Appl. },
  volume={12},
  number={4},
  pages={247--261},
  year={1978},
  publisher={Springer}
}

@article{LI84,
  title={The {H}{\"o}lder continuity of the {B}ergman projection and proper holomorphic mappings},
  author={Ligocka, E.},
  journal={Studia Math},
  volume={80},
  number={2},
  pages={89--107},
  year={1984},
note={\url{https://doi.org/10.4064/sm-80-2-89-107 }}

}

@article{LS12,
  title={{T}he {B}ergman projection in ${L}^p$ for domains with minimal smoothness},
  author={Lanzani, L. and Stein, E. M.},
  journal={{I}llinois J. Math.},
  volume={56},
  number={1},
  pages={127--154},
  year={2012},
  publisher={Duke University Press},
note={\url{ https://doi.org/10.1215/ijm/1380287464}}
}

@article{Li02,
  title={A {P}aley--{W}iener theorem for convex sets in {$\mathbb C^n$}},
  author={Lindholm, N.},
  journal={Bull. Sci. Math.},
  volume={126},
  number={4},
  pages={289--314},
  year={2002},
  publisher={Elsevier},
note={\url{https://doi.org/10.1016/S0007-4497(02)01124-7}}
}

@article{LuYl91,
  title={{A} generalization of {W}iener--{P}aley theorem to functionals in {S}mirnov spaces},
  author={Lutsenko, V. I. and Yulmukhametov, R. S.},
  journal={Proc. Steklov Inst. Math. },
  volume={200},
  pages={245--254},
  year={1991},
  publisher={Russian Academy of Sciences, Steklov Mathematical Institute of Russian~…},
note={\url{https://www.mathnet.ru/eng/tm1428}}
}

@article{MA67,
  title={{E}quations diff{\'e}rentielles d'ordre infini},
  author={Martineau, A.},
  journal={Bull. Soc. Math. France},
  volume={95},
  pages={109--154},
  year={1967}
}

@article{MK99,
  title={{O}n {C}auchy transforms in the {B}ergman space},
  author={Merenkov, S. A.},
  journal={Mat. Fiz. Anal. Geom},
  volume={4},
  pages={23--31},
  year={1999}
}

@article{MP05 ,
  title={{P}ositivity aspects of the {F}antappie transform},
  author={McCarthy, J.E. and Putinar, M.},
  journal={J. Anal. Math.},
  volume={97},
  number={1},
  pages={57--82},
  year={2005},
  publisher={Springer},
note={\url{https://doi.org/10.1007/BF02807402}}
}

@article{NY95,
  title={{O}n tHE {C}AUCHY TRANSFORM OF FUNCTIONALS ON a {B}ERGMAN SPACE},
  author={Napalkov Jr,V.V.  and , Yulmukhametov, R. S.},
  journal={Russian Acad. Sci. Sb. Math.},
year={1995},
note={\url{ 	
https://iopscience.iop.org/article/10.1070/SM1995v082n02ABEH003567/pdf}}
}

@article{IY04,
  title={{L}aplace transforms of functionals on {B}ergman spaces},
   author={Napalkov Jr,V.V.  and , Yulmukhametov, R. S.},
  journal={Izv. RAN. Ser. Mat.},
year={2004},
volume={68},
pages={5-42},
note={\url{ 	
 https://doi.org/10.1070/IM2004v068n01ABEH000465}
}
}

@book{Ra98,
  title={{H}olomorphic {f}unctions and {i}ntegral {r}epresentations in {s}everal {c}omplex {v}ariables},
  author={Range, R. M.},
  volume={108},
  year={1998},
  publisher={Springer Science \& Business Media},
note={\url{https://doi.org/10.1007/978-1-4757-1918-5}}
}

@article{Zs85,
  title={{S}trong linear convexity. {I}. {D}uality of spaces of holomorphic functions},
  author={Znamenskii, S.V.},
  journal={Sib. Math. J.},
  volume={26},
  number={3},
  pages={331--341},
  year={1985},
  publisher={Springer},
note={\url{https://doi.org/10.1007/BF00968619}
}
}
\bibliographystyle{plain}
\end{document}